\newtheorem{theorem}{Theorem}
\newtheorem{corollary}[theorem]{Corollary}
\newtheorem{proposition}[theorem]{Proposition}
\newtheorem{lemma}[theorem]{Lemma}
\newtheorem{conjecture}[theorem]{Conjecture}
\theoremstyle{definition}
\newtheorem{definition}{Definition}[section]
\numberwithin{theorem}{section}
\newcommand{\dd}{\mathrm{d}}
\newcommand{\cha}{\mathbb{1}}
\newcommand{\R}{\mathbb{R}}
\newcommand{\N}{\mathbb{N}}
\newcommand{\Z}{\mathbb{Z}}
\DeclareMathOperator{\sign}{sign}
\DeclareMathOperator{\id}{id}
\DeclareMathOperator{\Leb}{Leb}
\DeclareMathOperator{\supp}{supp}
\newcommand{\Wul}{\mathcal{W}^u_{\rm loc}}
\newcommand{\C}{\mathcal{C}}
\newcommand{\M}{\mathcal{M}}
\renewcommand{\S}{\mathcal{S}}
\newcommand{\condmeas}{\rho_\S}
\renewcommand{\L}{\mathcal{L}}
\renewcommand{\O}{\mathcal{O}}
\newcommand{\Aord}{1} % the level of differentiability required for observable A
\author{Caroline L. Wormell\thanks{Laboratoire de Probabilit\'es, Statistique et Mod\'elisation,
		Sorbonne Universit\'e, CNRS \\
		email: {\sf wormell@lpsm.paris}, {\sf ca.wormell@gmail.com}\\
		ORCID: 0000-0003-2953-6493}
}
\title{Conditional mixing in deterministic chaos} %Convergence of linear response formula for some piecewise hyperbolic maps}
\begin{document}
	\maketitle
	
	\begin{abstract}
		While on the one hand, chaotic dynamical systems can be predicted for all time given exact knowledge of an initial state, they are also in many cases rapidly mixing, meaning that smooth probabilistic information (quantified by measures) on the system's state has negligible value for predicting the long-term future. However, an understanding of the long-term predictive value of intermediate kinds of probabilistic information is necessary in various physical problems, and largely remains lacking. 
		
		Of particular interest in data assimilation and linear response theory are the conditional measures of the SRB measure on zero sets of general smooth functions of the phase space. In this paper we give rigorous and numerical evidence that such measures generically converge back under the dynamics to the full SRB measures, exponentially quickly. We call this property conditional mixing. We will prove that conditional mixing holds in a class of generalised baker's maps, and demonstrate it numerically in some non-Markovian piecewise hyperbolic maps. Conditional mixing provides a natural limit on the effectiveness of long-term forecasting of chaotic systems via partial observations, and appears key to proving the existence of linear response outside the setting of smooth uniform hyperbolicity.
	\end{abstract}
	
%	\red{TODO: find a snappy name for this mixing property. Currently called conditional mixing for some reason}
	
	By definition, chaotic dynamical systems' future states are generally impossible to predict over the long term. For this reason they must be studied probabilistically, that is, in terms of measures that evolve under the dynamics.
	Many probabilistic questions about topologically mixing chaotic systems $f: M \circlearrowleft$ are quantitatively related to fast mixing (i.e. decay of correlations) for smooth observables with respect to some invariant measure $\mu$ \cite{Baladi00}. Mathematically, mixing means the following decay of sufficiently regular (e.g. $C^\infty$) observables $A, B: M \to \R$:
	\[ \int_M A \circ f^n \, B\, \dd\mu - \int_M A\,\dd\mu \int_M B\,\dd\mu \xrightarrow{n\to\infty} 0. \]
	Alternatively, this can be reformulated as weak convergence of smoothly re-weighted versions of $\mu$ back to $\mu$ under the actions the dynamics:
	\[ \int_M A \, \dd f^n_* (B \mu) \xrightarrow{n\to\infty} \int_M A\,\dd\mu \]
	for any $A,B$ with $\int_M B\,\dd\mu = 1$, where $f_*$ pushes measures forward under $f$. 
	
	A particularly important invariant measure is the SRB measure $\rho$, to which, additionally, measures with smooth enough Lebesgue densities are expected to converge (typically exponentially quickly), thus establishing it as the physically relevant invariant measure:
	\begin{equation} \int_M A \, \dd f^n_* (B \mu) \xrightarrow{n\to\infty} \int_M A\,\dd\rho \label{eq:LebesgueConvergence} \end{equation}
	for all $A,B \in C^\infty$ with $\int_M B\,\dd\mu = 1$, where $\mu$ is Lebesgue measure. It is in fact standard that for sufficiently hyperbolic maps with $d_u$ positive Lyapunov exponents, \eqref{eq:LebesgueConvergence} also holds for any $\mu$ with a regular conditional density along $d_u$-dimensional submanifolds that are tangent to expanding direction in phase space.
	
%	For uniformly hyperbolic systems, statistical behaviour of Lebesgue-initialised Birkhoff averages, the response of $\rho$ to dynamical perturbations, ... are among the things that can be related to quantitative decay of correlations for the SRB measure.
	
	However, establishing fast mixing (or the related decay of iterates of a transfer operator) appears to be insufficient to answer various questions that depend on the long-term behaviour of a small problematic subset of the system's attractor, such as the response problem in non-uniformly hyperbolic systems \cite{Ruelle18}. We might therefore ask if other measures $\mu$ have the same property as Lebesgue measure in \eqref{eq:LebesgueConvergence}.
	
	Perhaps the simplest to describe example of a small subset of an attractor would be its restriction to a submanifold of phase space---which will generically be transverse to both stable and unstable manifolds. If this submanifold comes from some foliation (e.g. of level sets of an observable), we can disintegrate the physical measure and study it's conditional measure $\mu$ on this submanifold. An example of such a measure is shown in Figure~\ref{fig:CondMeasure}: note that unlike Gibbs invariant measures, the measure is supported on a non-invariant Cantor set without a product structure.
	
	If \eqref{eq:LebesgueConvergence} obtains for such a conditional measure $\mu$, we call it conditional mixing. For dissipative systems, conditional mixing appears to lie outside the scope of traditional study by transfer operators, and this basic problem has hitherto seen very little study, notwithstanding work for other classes of $\mu$ in the specific case of linear one-dimensional maps \cite{Hochman15,Bourgain17, Sahlsten20}
	
	\begin{figure}
		\centering
		\includegraphics{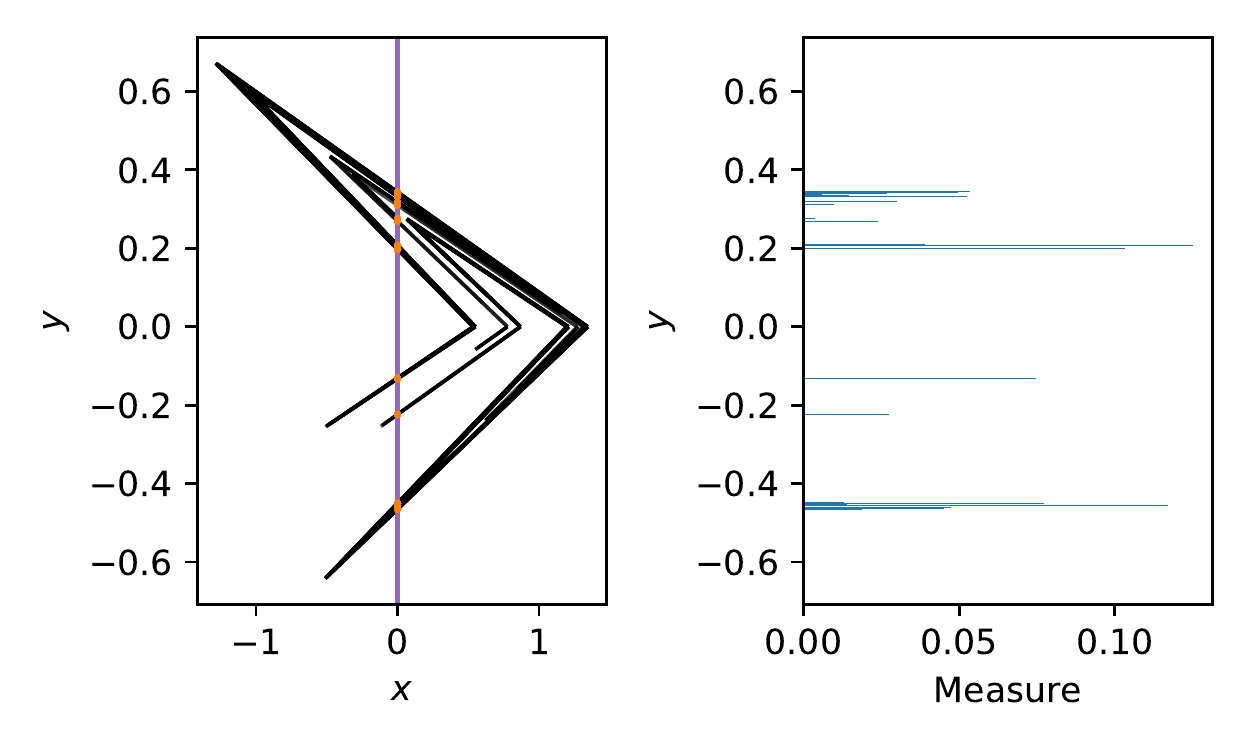}
		\caption{Left: picture of the Lozi attractor at $a=1.7, b=0.5$ (black), the singular line $\{x = 0\}$ (purple line), their intersection (orange Cantor set). Right: histogram of $\rho(y\mid 0)$, obtained from $200,\!000$ iterates of the unstable manifold dynamics $\vec{f}$, binned at width $0.0025$.}
		\label{fig:CondMeasure}
	\end{figure}
	
	Nevertheless, through a mixture of rigorous and theoretical study we will show that conditional mixing holds for a range of maps, and in particular, has some connection with the fractal-geometric theory of Fourier dimension \cite{Sahlsten20,Mosquera18}. From our results it seems that conditional mixing is likely to hold for a large set of maps and submanifolds (perhaps even almost all those for which there is no ``obvious'' reason why it could not hold). 
	
	We also describe some consequences of conditional mixing. One physically meaningful consequence of conditional mixing is that the capability of Bayesian filters to make predictions about chaotic systems in the long term is limited if they only make partial observations (see Section~\ref{s:PartialObs}); it also implies set-filling results for chaotic attractors (see Section~\ref{s:Definition}). Another physical application, to proving the widely-believed existence of linear response outside of smooth hyperbolic systems, will be considered in \cite{lozi}.
\\

	The paper is structured as follows: in Section~\ref{s:Definition} we give a mathematical definition of conditional mixing and give a simple, illustrative consequence of it involving set filling; in Section~\ref{s:PartialObs} we present the application of conditional mixing to the area of forecasting. In Sections~\ref{s:BakerResults} and~\ref{s:LoziResults} we give evidence for conditional mixing in various systems, respectively presenting a theorem for a class of (potentially nonlinear) baker's maps and numerical evidence for some piecewise hyperbolic maps. We discuss our results in Section~\ref{s:Conclusion}. The novel algorithms we use to obtain for our numerical results are given in Appendix~\ref{s:Algorithms}.
		
	\section{Definition and an illustrative consequence}\label{s:Definition}

	Let $T: M\to M$ be a dynamical system with SRB measure $\rho$, and let $H: M \to \R^d$ be a $C^2$ function with no critical points on the level set 
	\[\ell_H := \{x \in M : H(x) = 0\}.\] 
	Suppose that for some $c:\R^+ \to \R^+$ the following limit exists in the $C^0$-weak topology:
	\[ \mu = \lim_{\delta\to 0} c(\delta)^{-1} \mathbb{1}(|H(\cdot)|\leq \delta) \rho \]
	and that $\mu$ is a finite measure. Any two sequences $c, c'$ will yield $\mu, \mu'$ identical up to scaling, as would changing the kernel $k(\delta^{-1} |H(x)|) = \mathbb{1}(\delta^{-1}|H(x)| < 1)$ to another bounded, compactly-supported decreasing function. If $\mu(x)$ is a probability measure then we denote it by $\rho(x \mid H(x) = 0)$.
	
	\begin{definition}
		$(T,\rho)$ has {\it conditional mixing} with respect to $H$ if for all $A, B \in C^\infty$,
		\begin{equation} \left|\int_M A\circ T^n\,B\, \dd\mu - \int_M A\,\dd\rho \int_M B \dd\mu\right| \xrightarrow{n\to\infty} 0. \label{eq:CondMix} \end{equation}
		
%		If for some curve $\ell$ all functions $H$ with $\ell_H = \ell$ have conditional measures equivalent by bounded $C^1$ densities, then we say $T,\rho$ has conditional mixing with respect to $\ell$.
	\end{definition}
%	\begin{proposition}
%		If $M$ is a compact two-dimensional manifold then $T,\rho$ has conditional mixing with respect to the function $H$ implies conditional correlations with respect to the curve $\ell_H$.
%	\end{proposition}
%	\begin{proof}
%		content...
%	\end{proof}

	This is to say that repeatedly pushing forward any weighted conditional measure $B\mu$ by $T$, we converge back to the SRB measure $\rho$ (up to a multiplicative constant) in the weak topology with respect to $C^1$.

	It is natural to wish to put some quantitative bounds on mixing. By analogy with the usual sort of mixing, a natural decay rate is exponential:
	\begin{definition}
		$(T,\rho)$ has {\it exponential conditional mixing} with respect to $H$ if there exist $\xi<1$, $C$ and $r<\infty$ such that for all $A, B \in C^r$,
		\[ \left|\int_M A\circ T^n\,B\, \dd\mu - \int_M A\,\dd\rho \int_M B\, \dd\mu\right| \leq C \xi^n \|A\|_{C^r} \|B\|_{C^r}. \]
		
		%		If for some curve $\ell$ all functions $H$ with $\ell_H = \ell$ have conditional measures equivalent by bounded $C^1$ densities, then we say $T,\rho$ has conditional mixing with respect to $\ell$.
	\end{definition}

	Note that (exponential) conditional mixing can already be expected to hold from the classical theory if $f$ is conservative. For example, we have the following (proven in Appendix~\ref{a:CoveringProofs}):
	\begin{proposition}\label{p:ConservativeCM}
		Suppose $f$ is a $C^3$ conservative topologically mixing Anosov map. Then if $H \in C^2(M,\R^d)$ has no singular points on its zero set, the zero set is always transverse to stable manifolds, and $d$ is less than the number of stable directions, then $(f,\rho)$ has exponential conditional mixing with respect to $H$.
	\end{proposition}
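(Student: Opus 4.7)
The plan is to reduce exponential conditional mixing to the spectral theory of the transfer operator $\L := f_*$ on an anisotropic Banach space $\B$ adapted to $f$, such as those of Gou\"ezel--Liverani or Baladi--Tsujii. Under the $C^3$, conservative, topologically mixing Anosov hypotheses, Lebesgue is the unique SRB measure $\rho$, and $\L$ has $1$ as a simple isolated eigenvalue with eigenvector $\rho$, while the rest of its spectrum lies in a disk of radius $\xi<1$. Once it is shown that $B\mu \in \B$, the spectral decomposition yields
\[ \L^n(B\mu) = \Big(\int_M B\,\dd\mu\Big)\rho + O(\xi^n) \quad \text{in } \B, \]
and pairing with any $A\in C^\infty$ (which lies in the predual of $\B$ provided the space is chosen with sufficient smoothness) gives the desired exponential estimate.

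The key geometric input is that the hypotheses force the distributional singularities of $\mu$ to live purely in stable directions. At each $x\in\ell_H$, transversality $T_x\ell_H+E^s_x=T_xM$ combined with $d<d_s$ gives $\dim(T_x\ell_H\cap E^s_x)=d_s-d>0$, and equivalently the projection along $E^s_x$ restricts to a surjection $T_x\ell_H\twoheadrightarrow E^u_x$. Consequently, a $d$-dimensional normal bundle to $\ell_H$ can be chosen to lie entirely inside $E^s$. By the coarea formula, $\mu$ is the $(\dim M-d)$-dimensional Hausdorff measure on $\ell_H$ weighted by $|\det DH|_{(T\ell_H)^\perp}|^{-1}$; so in a local product chart $W^u_{\mathrm{loc}}\times W^s_{\mathrm{loc}}$ adapted to $f$, $B\mu$ factors as a smooth compactly supported density in the unstable coordinates and in the $(d_s-d)$ stable coordinates tangent to $\ell_H$, times a Dirac mass of codimension $d$ in the remaining (normal, stable) directions. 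This is precisely the class of currents that anisotropic norms are designed to accommodate: they tolerate Dirac-type singularities of any finite codimension in stable directions while demanding only limited smoothness in unstable ones.

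The main obstacle is bookkeeping: selecting a specific anisotropic space (the $C^\infty$ regularity of $A$ and $B$ leaves ample slack in this choice), and then bounding $\|B\mu\|_\B$ using the $C^3$ regularity of $f$ (which controls the smoothness of the stable and unstable foliations, and hence of the adapted charts) together with the $C^2$ regularity of $H$ (which controls the coarea Jacobian and ensures $\ell_H$ is a $C^2$ submanifold), typically after a finite partition of unity along $\ell_H$. Once this norm estimate is in hand, $\xi$ is identified with the spectral gap of $\L$ on $\B$ and $r$ with the regularity required by the predual pairing, and the claimed exponential conditional mixing follows directly.
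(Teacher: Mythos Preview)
Your proposal is correct and follows essentially the same route as the paper: both arguments place $B\mu$ in a Gou\"ezel--Liverani anisotropic space (the paper specifies $\mathcal{B}^{1,1}$), using transversality of $\ell_H$ to the stable foliation so that the singular directions of $\mu$ are purely stable, and then invoke the spectral gap for the transfer operator. The only presentational difference is that the paper verifies $B\mu\in\mathcal{B}^{1,1}$ by showing the smooth approximants $\mu_\delta = c(\delta)^{-1}\psi(|H|/\delta)\,\rho$ converge in that norm, rather than by the direct product-chart description you give.
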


	The measure-based conditional mixing implies an interesting set-convergence property of the intersection of the level set $\ell_H$ with the support of $\rho$, which we notate as $\Lambda$ and which is often an attractor of $T$. We find that iterates of the intersection of the line and $\Lambda$, i.e. iterates of a slice of $\Lambda$, converge back in Hausdorff distance to the full support of $\rho$. An example of this phenomenon is plotted in Figure~\ref{fig:Covering}.
	\begin{proposition}\label{p:HausdorffDistance}
		Conditional mixing with respect to $H$ implies that 
		\[ \lim_{n\to\infty} d_{\rm Haus}(T^n(\ell_H \cap \Lambda),\Lambda) = 0. \]
	\end{proposition}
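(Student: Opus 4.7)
The plan is to treat the two suprema in the Hausdorff distance separately. One direction is essentially automatic from invariance: since $T$ is continuous and $\rho$ is $T$-invariant, we have $\supp(T_*\rho) = \overline{T(\supp\rho)} = \supp\rho$, so $T(\Lambda) \subseteq \Lambda$. As $\ell_H \cap \Lambda \subseteq \Lambda$, iterating gives $T^n(\ell_H \cap \Lambda) \subseteq \Lambda$ for all $n$, so $\sup_{x \in T^n(\ell_H \cap \Lambda)} d(x,\Lambda) = 0$ identically.

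For the remaining direction, I would fix $\eps > 0$ and use compactness of $\Lambda$ to produce a finite cover $\Lambda \subseteq \bigcup_{i=1}^N B(y_i, \eps/2)$ by balls centered at points $y_i \in \Lambda$. It then suffices to show that, for each $i$, the set $T^n(\ell_H \cap \Lambda)$ meets $B(y_i, \eps/2)$ for all sufficiently large $n$: then for any $y \in \Lambda$ and $n$ large one has $y \in B(y_i, \eps/2)$ for some $i$, and a point $z \in T^n(\ell_H \cap \Lambda)\cap B(y_i, \eps/2)$ gives $d(y,z) < \eps$. For each $i$, pick a nonnegative bump function $A_i \in C^\infty$ supported in $B(y_i, \eps/2)$ with $A_i(y_i) > 0$; because $y_i \in \supp\rho$, the open set $\{A_i > 0\}$ has positive $\rho$-measure, so $\int A_i\,\dd\rho > 0$. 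Applying the conditional-mixing hypothesis with $A = A_i$ and the admissible smooth weight $B \equiv 1$ (admissible since $\mu$ is a finite measure) yields
\[ \int_M A_i \circ T^n\,\dd\mu \xrightarrow{n\to\infty} \mu(M)\int_M A_i\,\dd\rho > 0. \]
In particular, for $n$ past some threshold $N_i$, this integral is strictly positive, so there exists $x \in \supp\mu$ with $A_i(T^n x) > 0$, i.e., $T^n x \in B(y_i, \eps/2)$. Since $\supp\mu \subseteq \ell_H \cap \Lambda$ by construction of $\mu$ as the vague limit of $c(\delta)^{-1}\cha(|H|\leq\delta)\rho$ as $\delta \to 0$, we conclude $T^n(\ell_H \cap \Lambda) \cap B(y_i, \eps/2) \neq \emptyset$. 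Taking $n \geq \max_i N_i$ finishes the argument.

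I do not expect any serious obstacle: conditional mixing is considerably stronger than what this set-level consequence requires. The only points that deserve a line of care are (i) the inclusion $\supp\mu \subseteq \ell_H \cap \Lambda$, which follows because the supports of the approximating measures $c(\delta)^{-1}\cha(|H|\leq\delta)\rho$ are contained in $\{|H|\leq\delta\}\cap\supp\rho$ and shrink down to $\ell_H \cap \Lambda$ as $\delta\to 0$, and (ii) the forward invariance $T(\Lambda)\subseteq\Lambda$, which uses only continuity of $T$ and $T_*\rho=\rho$.
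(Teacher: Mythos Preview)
Your proposal is correct and follows essentially the same approach as the paper's proof: both handle one direction of the Hausdorff distance via forward invariance of $\Lambda$, then cover $\Lambda$ by finitely many small balls and use conditional mixing applied to nonnegative bump functions (with $B\equiv 1$) to force each ball to be hit by $T^n(\ell_H\cap\Lambda)$ for large $n$. The only cosmetic differences are your use of radius-$\eps/2$ balls and a triangle-inequality step versus the paper's radius-$\eps$ balls with a $2\eps$ bound, and your slightly more explicit justification of the inclusions $\supp\mu\subseteq\ell_H\cap\Lambda$ and $T(\Lambda)\subseteq\Lambda$.
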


	Under a reasonably general assumption on the regularity of the SRB measure, exponential conditional mixing gives us a quantitative version of this result as well:
	\begin{proposition}\label{p:ExponentialHausdorffDistance}
		Suppose $\rho$ is lower-Ahlfors regular: i.e. there exist $C,d$ such that $\rho(B(x,\delta)) > C \delta^d$ for all $x$. Exponential conditional mixing with respect to $H$ implies that for some $\xi_1 < 1$ and $C_1$,
		\[  d_{\rm Haus}(T^n(\ell_H \cap \Lambda),\Lambda) \leq C_1\xi_1^n. \]
	\end{proposition}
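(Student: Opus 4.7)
The plan is to reduce the Hausdorff convergence to a positive-mass statement on small balls and then apply exponential conditional mixing to a well-localised $C^r$ bump function. First, since $\Lambda = \supp\rho$ is $T$-invariant, $T^n(\ell_H \cap \Lambda) \subseteq \Lambda$, so
\[ d_{\rm Haus}(T^n(\ell_H \cap \Lambda), \Lambda) = \sup_{x \in \Lambda} d(x, T^n(\ell_H \cap \Lambda)). \]
The defining limit for $\mu$ also forces $\supp \mu \subseteq \ell_H \cap \Lambda$, hence $\supp(T^n_*\mu) \subseteq T^n(\ell_H \cap \Lambda)$. It therefore suffices to produce an exponentially small $\delta_n$ with $T^n_*\mu(B(x,\delta_n)) > 0$ uniformly in $x \in \Lambda$.

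Second, for each $x \in \Lambda$ and $\delta > 0$, I would take a rescaled $C^r$ bump $\phi_{x,\delta}$, obtained from a single fixed template, satisfying $\tfrac{1}{2}\mathbb{1}_{B(x,\delta/2)} \leq \phi_{x,\delta} \leq \mathbb{1}_{B(x,\delta)}$ and $\|\phi_{x,\delta}\|_{C^r} \leq C_\psi \delta^{-r}$. Applying exponential conditional mixing with $A = \phi_{x,\delta}$ and $B\equiv 1$ then yields
\[ T^n_*\mu(B(x,\delta)) \geq \int \phi_{x,\delta}\, d(T^n_*\mu) \geq \mu(M) \int \phi_{x,\delta}\, d\rho - C\xi^n \|\phi_{x,\delta}\|_{C^r}. \]
Lower-Ahlfors regularity gives $\int \phi_{x,\delta}\, d\rho \geq \tfrac{1}{2}\rho(B(x,\delta/2)) \geq C_\rho \delta^d$ uniformly in $x \in \Lambda$, so
\[ T^n_*\mu(B(x,\delta)) \geq K\delta^d - C'\xi^n \delta^{-r} \]
for uniform constants $K, C' > 0$. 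Setting $\delta = \delta_n := C_2 \xi^{n/(d+r)}$ with $C_2$ large enough makes the right-hand side strictly positive, giving the conclusion with $\xi_1 := \xi^{1/(d+r)} < 1$ (and $C_1$ adjusted via the trivial bound $\diam M$ for the small values of $n$ where $\delta_n > \diam M$).

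The only real obstacle is the tradeoff between the polynomial gain $\delta^d$ from Ahlfors regularity and the $C^r$-norm blowup $\delta^{-r}$ of the bump, which degrades the exponential rate from $\xi$ to $\xi^{1/(d+r)}$. This loss appears intrinsic to extracting a pointwise support statement from a $C^r$-weak mixing bound, and would only be avoidable under a stronger regularity assumption on $\rho$ (e.g.\ an explicit density) or conditional mixing in a norm such as bounded variation that permits sharper indicator-like test functions.
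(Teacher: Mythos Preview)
Your proof is correct and follows essentially the same route as the paper: both localise with a rescaled $C^r$ bump of norm $\lesssim \delta^{-r}$, use lower-Ahlfors regularity to get a $\delta^d$ lower bound on its $\rho$-integral, apply exponential conditional mixing with $B\equiv 1$, and balance $\delta^d$ against $\xi^n\delta^{-r}$ to obtain the rate $\xi_1=\xi^{1/(d+r)}$. Your write-up is in fact slightly more careful than the paper's in making the one-sided Hausdorff reduction explicit and in handling the finitely many small $n$.
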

	The proofs of these two propositions are given in Appendix \ref{a:CoveringProofs}.

	\begin{figure}
		\centering
		\includegraphics{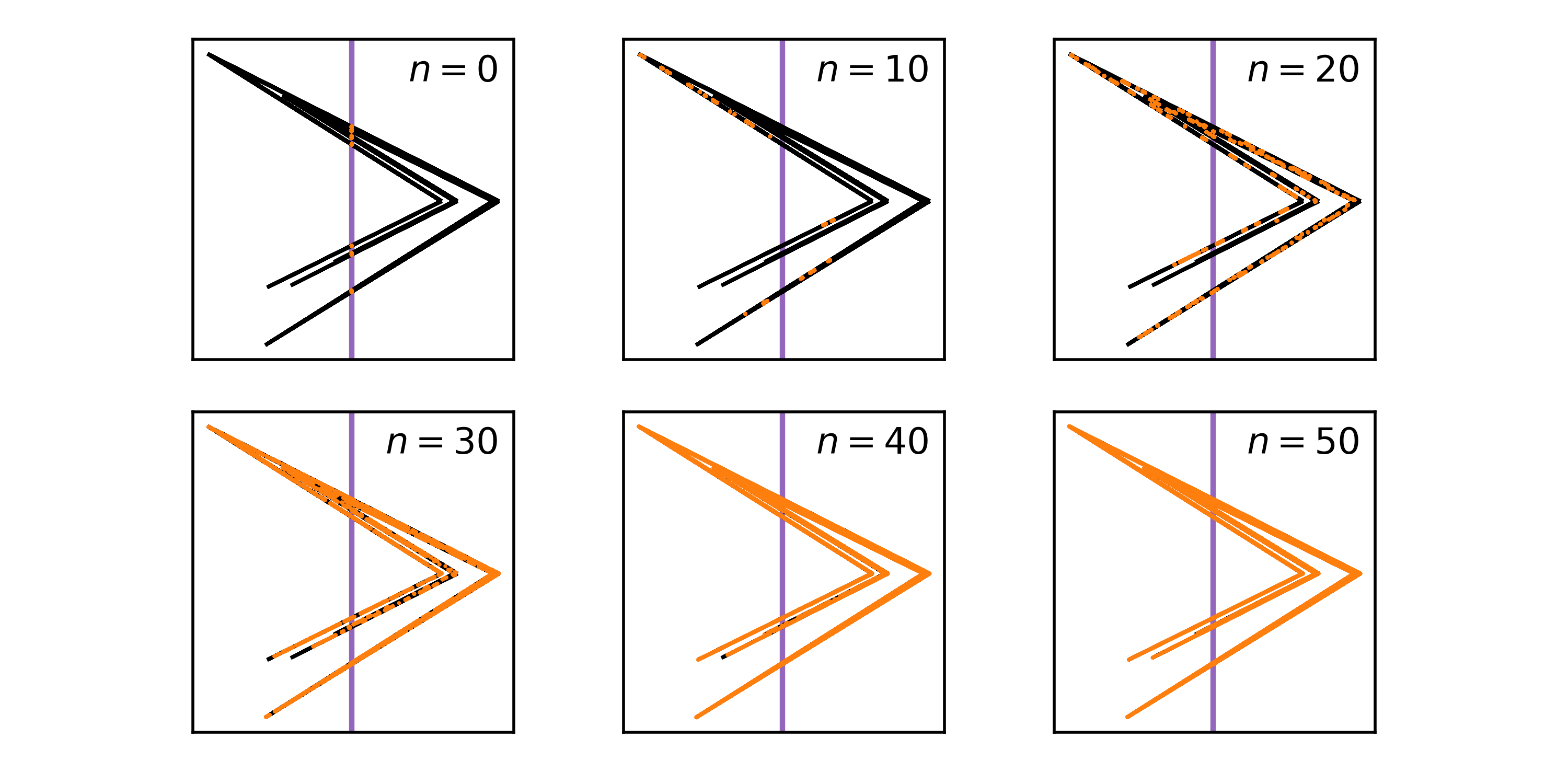}
		\caption{Exponentially fast filling of the Lozi attractor $\Lambda \subset \R^2$ (black) by the set intersection $\Lambda \cap \{x = 0\}$ (orange), pushed forward $n$ times by the same Lozi map. The level set $\{x = 0\}$ is given in purple. Lozi parameters $a = 1.8$, $b = 0.35$ are used.}
		\label{fig:Covering}
	\end{figure}

	\section{Forecasting with perfect partial observations}\label{s:PartialObs}

We now consider a fundamental practical problem to which the notion of conditional mixing is directly applicable: that of forecasting chaotic dynamics.
Many forecasting methods have been developed that assimilate information obtained from observations. In general, these methods achieve this assimilation by approximating the Bayesian filter, also known as the optimal filter \cite{Doucet01}. 

To understand this in the simplest instance, let us suppose that our system $T: \M \circlearrowleft$ for $\M \subset \R^d$ has exponential mixing, and at time $n=0$, we have some prior probabilistic knowledge of the state of our system, given by some (presumably ``nice'') measure $\dd\mu^{-}(x)$. If we start with an unobserved system at statistical equilibrium, the natural choice of prior is $\mu^{-} = \rho$, the SRB measure.

We can now make a noisy and perhaps partial observation of our system, given as a value $y = H(x) + \zeta \in \R^{e}$, where $\zeta$ is random with probabilities given by a smooth kernel $p(\zeta\mid x)\,\dd \zeta$. 

Assimilating the observation $y$, the posterior probability measure of $x$ is given by Bayes' theorem as
\begin{equation}
\dd\mu(x) = Z(y)^{-1} p(y - H(x)\mid x)\,\dd\mu^{-}(x), \label{eq:BayesTheorem}
\end{equation}
with normalising constant $Z(y) = \int_{M} p(y-H(w)\mid w)\,\dd\mu^{-}(w)$. An example is given in Figure~\ref{fig:BayesianAttractor}. Our best guess of the state of the system at future time $n$ (i.e. of $T^n(x)$) is then given by $T^n_*\mu$; the expected value of some (nice) observable $A$ at time $n$ is therefore
\begin{align} \int_M A(T^n(w))\,\dd\mu(w) = \int_M A(T^n(w))\, \frac{p(y-H(w)\mid w)}{Z(y)}\,\dd\mu^-(w).  \label{eq:ForwardPrediction} \end{align}

\begin{figure}[tb!]
	\centering
	\includegraphics{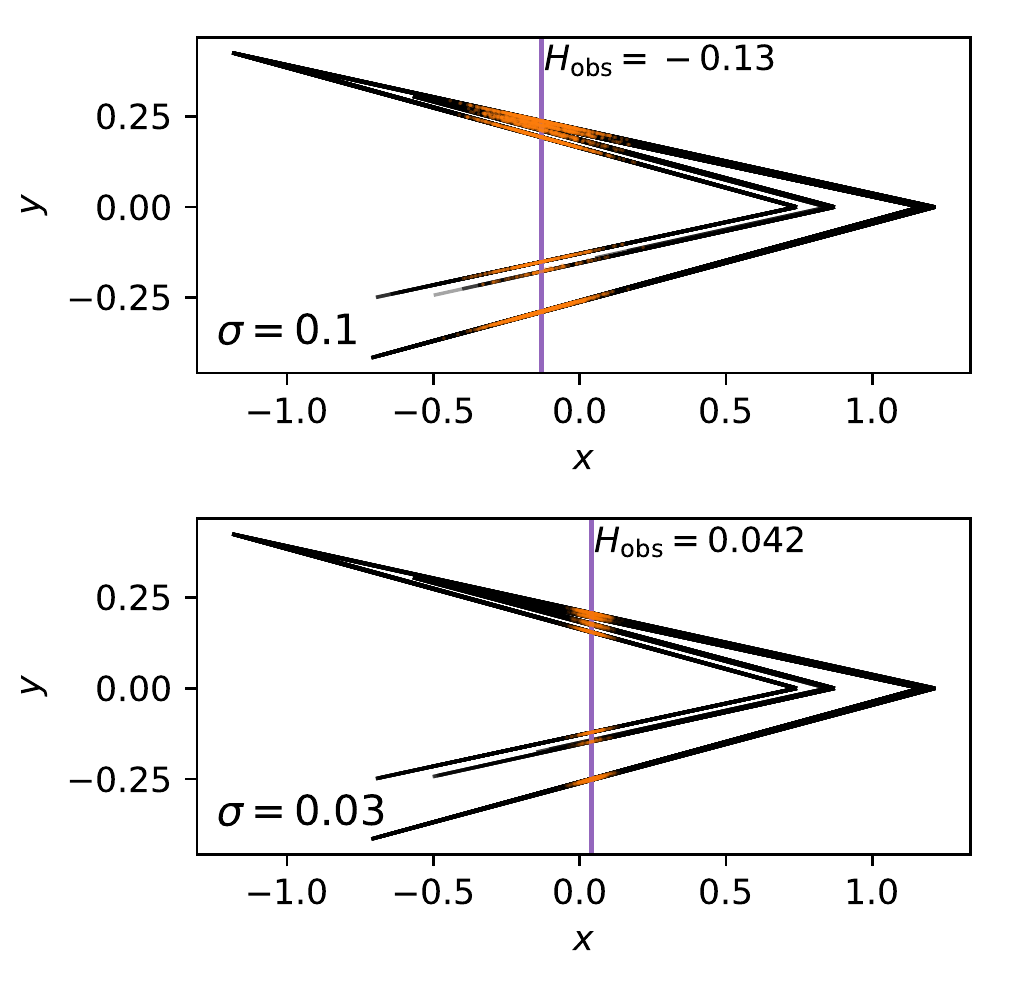}
	\caption{Posterior distribution (orange) after one observation step of $H(u,v) = u + \zeta$ where $\zeta \sim \mathcal{N}(0,\sigma^2)$, for varying values of $\sigma$. Line corresponding to $H(x,y) = H_{\mathrm{obs}}$ in purple, attractor in black. Convergence to the conditional measure as in Figure~\ref{fig:CondMeasure} can be seen as the noise $\sigma \to 0$.}
	\label{fig:BayesianAttractor}
\end{figure}

Depending on how effective a measurement $y=H(x)$ is of $x$, $\mu$ is likely to be concentrated on a smaller set than $\mu^-$, and this may improve forward estimates of the system's state over the short to medium term.
%	If $y = H(x)$ is a very precise estimate of $x$, then most likely $\mu^-$ is concentrated about the true value of $x$, and these estimates of $A(f^n(x))$ will give us some useful information for some time into the future. However, as $n\to\infty$, we expect intuitively that all our information on the system will ``wash out'' and our best guess of the system's state will revert back to the physical measure $\rho$. 
However, under our assumptions for $T, A, \mu^-, p$, exponential mixing results give that \eqref{eq:ForwardPrediction} will eventually converge at a fixed exponential rate to the SRB measure expectation $\int_M A\,\dd\rho$.

But as we make our observations more and more precise, i.e. reduce the noise in $H$ to zero, we might ask what posterior we end up with and what quality of forecast we can make with it. This is trivial if $H(x)$ specifies $x$: we will know our value of $x$ exactly, and therefore $T^n(x)$ exactly for all time. However, it is typical in high-dimensional systems for $H$ to be only a partial observation.

In a zero-noise limit the kernel $p$ is no longer smooth, with $p(\zeta\mid w) = \delta(\zeta)$. Our posterior measure $\dd\mu(x)$ is then the simply the conditional probability measure of $\dd\mu^{-}(x)$ given that $H(x) = y$:
%\footnote{If $H_*\mu^{-}$ is absolutely continuous, then $Z(y)$ remains a function and we may be able to recover some version of \eqref{eq:BayesTheorem}: see for example Proposition~\ref{p:ConditionalMeasures}.}: let us notate this as $\rho(w\mid H(w) = y)$. Our future expectations of observable $A$ then become
\[\int_M A(f^n(w))\,\dd\rho(w\mid H(w) = y). \]

%Following the intuition that incomplete information on the system should wash out over time, we would expect that for most choices of $H$ this converges to the SRB measure expectation for all $A$ (i.e. conditional mixing holds). %Indeed, this is what our results in . On the other hand, we don't yet have a theoretical grip on this in general, 
If conditional mixing \eqref{eq:CondMix} holds, this must converge to the expectation of $A$ with respect to the SRB measure $\rho$, i.e. in the long-term we end up back with the default no-information guess. Conditional mixing thus codifies the intuition that typical incomplete information on the system should wash out over time.

On the other hand, if conditional mixing did not hold generically, and thus partial observations could with some positive likelihood be predictively useful for all time, there might be significant practical consequences for chaotic systems. However, what our exploratory mathematical results in Sections~\ref{s:BakerResults} and~\ref{s:LoziResults} will suggest is that conditional mixing should in fact hold, excluding this possibility.
\\
In practice, of course, physical observations will always have some random error to them. This is also true of the evolution of physical systems, which are nonetheless considered worth studying in their zero noise limit. In any case, if an error is small enough it will take a while to manifest, and the zero-noise limit is what captures the medium term behaviour (see Figure~\ref{fig:Bayesian}). 

\begin{figure}
	\centering
	\includegraphics{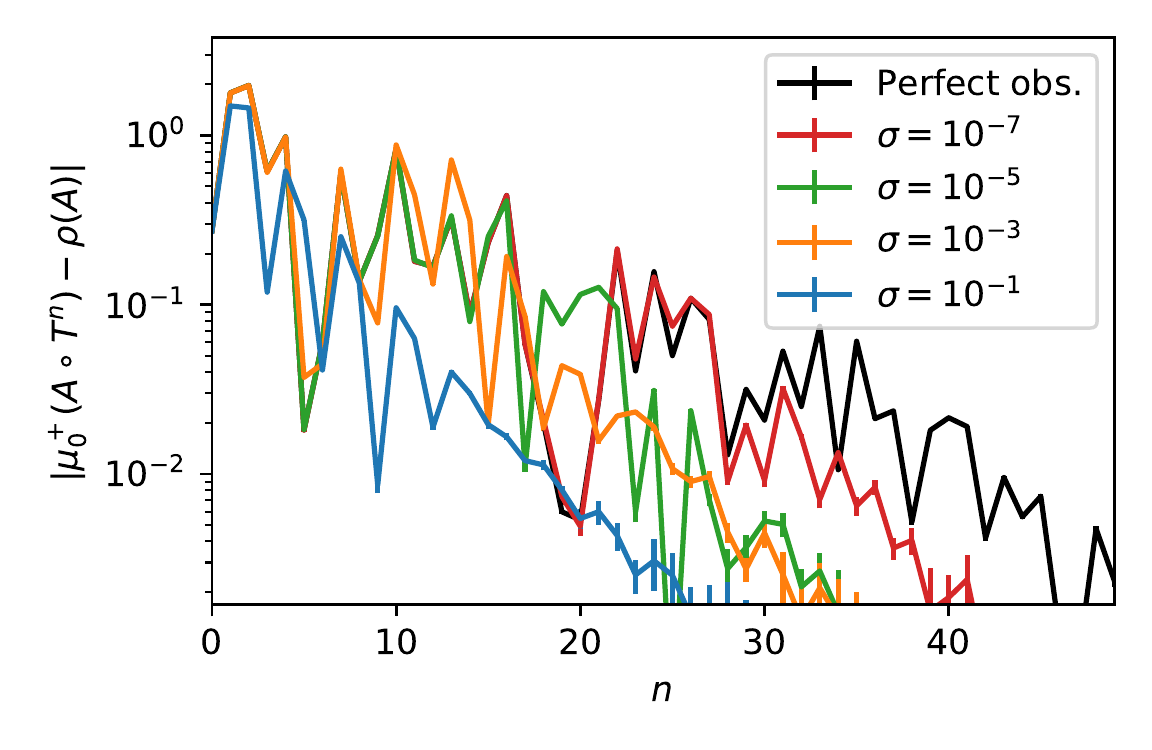}
	\caption{Decay of mean forecasts of $A(x,y) = 2x$ to SRB measure averages for Bayesian filters with various levels of observation noise. The observation function used was $H(x,y) = x$.}
	\label{fig:Bayesian}
\end{figure}

	\section{Rigorous results for a toy model: baker's map}\label{s:BakerResults}
	
%	It turns out that we can in fact show that conditional mixing holds with exponential convergence rates for 
As a simple model to study conditional mixing, let us consider baker's maps $b: D := [0,1]^2 \circlearrowleft$ of the following form:
	\begin{equation} \label{eq:BakerMap}
	b(x,y) = (kx \mod 1, v_{\lceil kx \rceil}(y))
	\end{equation}
	where $k\geq 2$ is an integer, the $v_i, i = 1,\ldots,k$ are (possibly nonlinear) contractions with all $|v_i'|\leq \mu < 1$. We will also assume that the open images $v_i((0,1))$ are disjoint, and the contractions have bounded distortion (i.e. the $\log |v_i'|$ are $C^1$). An example of such a map is plotted in Figure~\ref{fig:Baker}. For smooth enough transverse foliations, it is possible to define in a natural way conditional measures for all leaves:
	\begin{proposition}\label{p:ConditionalMeasures}
			Suppose that $\Psi(t;y) = (\psi(y) -t,y)$ is a foliation of some subset $D_\Psi \subseteq D$ for $|t| \leq t_* \in [0,1]$ and $y \in [0,1]$.
			Let $\rho$ be the (unique) SRB measure of $b$.
			
			 Then for every $t\in [-t_*,t_*]$ there exists a unique probability measure $\rho_t$ supported on $\Psi(t,[0,1])$ such that
			\begin{enumerate}[a.]
				\item For all $t \in (-t_*,t_*)$ and all continuous functions $A: D\to\R$,
				\begin{equation} \int_D A\,\dd\rho_t = \lim_{\delta\to 0} \frac{1}{2\delta} \int_{\left\{\Psi(s,y) :  |s-t|<\delta,y\in[0,1]\right\}} A\,\dd\rho, \label{eq:BakerSlice}\end{equation}
				\item For all Borel sets $E \subseteq D_\Psi$,
				\[ \rho(E) = \int_{-t_*}^{t_*} \rho_t(E)\,\dd t. \]
			\end{enumerate}
	\end{proposition}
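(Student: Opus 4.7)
The natural approach is to exploit the product structure of the SRB measure for baker's maps of the form \eqref{eq:BakerMap}. First I would show that $\rho = \Leb_{[0,1]}\otimes\nu$, where $\nu$ is the unique probability measure on $[0,1]$ solving the IFS fixed-point equation $\nu = k^{-1}\sum_{i=1}^{k}(v_i)_*\nu$. Existence and uniqueness of $\nu$ follow from standard Hutchinson theory for a contractive IFS with disjoint images, and the product form of $\rho$ follows either by direct verification that $\Leb\otimes\nu$ is $b$-invariant (and hence coincides with the unique SRB measure) or via the SRB characterization: conditionals along unstable manifolds are Lebesgue, and here those manifolds are horizontal segments on which $b$ expands by the constant factor $k$, forcing the conditionals to be uniform.

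With this decomposition in hand, define the candidate conditional measures
\[ \rho_t := \Psi(t,\cdot)_*\nu, \]
which are probability measures supported on $\Psi(t,[0,1])$. Item (b) then reduces to an elementary change of variables: the map $(x,y)\mapsto(\psi(y)-x,y)=(t,y)$ on $D_\Psi$ has Jacobian determinant of absolute value $1$, so $\dd x\,\dd y = \dd t\,\dd y$. Hence for any Borel $E\subseteq D_\Psi$, Fubini's theorem gives
\[ \rho(E) = \int_{-t_*}^{t_*}\!\int_0^1 \cha_E(\Psi(t,y))\,\nu(\dd y)\,\dd t = \int_{-t_*}^{t_*}\rho_t(E)\,\dd t. \]

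For item (a), fix $t\in(-t_*,t_*)$ and a continuous $A$. The same change of variables turns the right-hand side of \eqref{eq:BakerSlice} into
\[ \frac{1}{2\delta}\int_{t-\delta}^{t+\delta}\!\Big(\int_0^1 A(\Psi(s,y))\,\nu(\dd y)\Big)\dd s. \]
Because $|\Psi(s_1,y)-\Psi(s_2,y)|=|s_1-s_2|$ uniformly in $y$ and $A$ is uniformly continuous on the compact $D$, the inner integral is continuous in $s$, so dominated convergence yields $\int A\,\dd\rho_t$ as $\delta\to 0$. Uniqueness of $\rho_t$ for $t\in(-t_*,t_*)$ is then immediate from (a) and Riesz representation, and the endpoints $t=\pm t_*$ are handled by weak-$*$ continuity of $t\mapsto \Psi(t,\cdot)_*\nu$, which inherits from continuity of $\Psi$.

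The main work is really the product decomposition $\rho=\Leb\otimes\nu$; once that is secured, everything collapses to a single change of variables and Fubini. The one technical subtlety is ensuring $\psi$ has enough regularity (at least $C^1$, presumably implicit in ``foliation'') so that the change of variables holds with the Jacobian as stated.
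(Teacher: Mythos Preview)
Your proposal is correct and follows essentially the same route as the paper: the paper also establishes the product decomposition $\rho = \Leb \times \nu_0$ (its Proposition~\ref{p:SRBMeasure}), defines $\rho_t := \Psi(t,\cdot)_*\nu_0$, and then obtains both (a) and (b) via the shear change of variables $(x,y)\leftrightarrow(t,y)$ together with dominated convergence. One small remark: the Jacobian-$1$ change of variables here is just a translation in $x$ for each fixed $y$, so no regularity of $\psi$ is actually needed for that step.
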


	\begin{figure}
		\centering
		\includegraphics{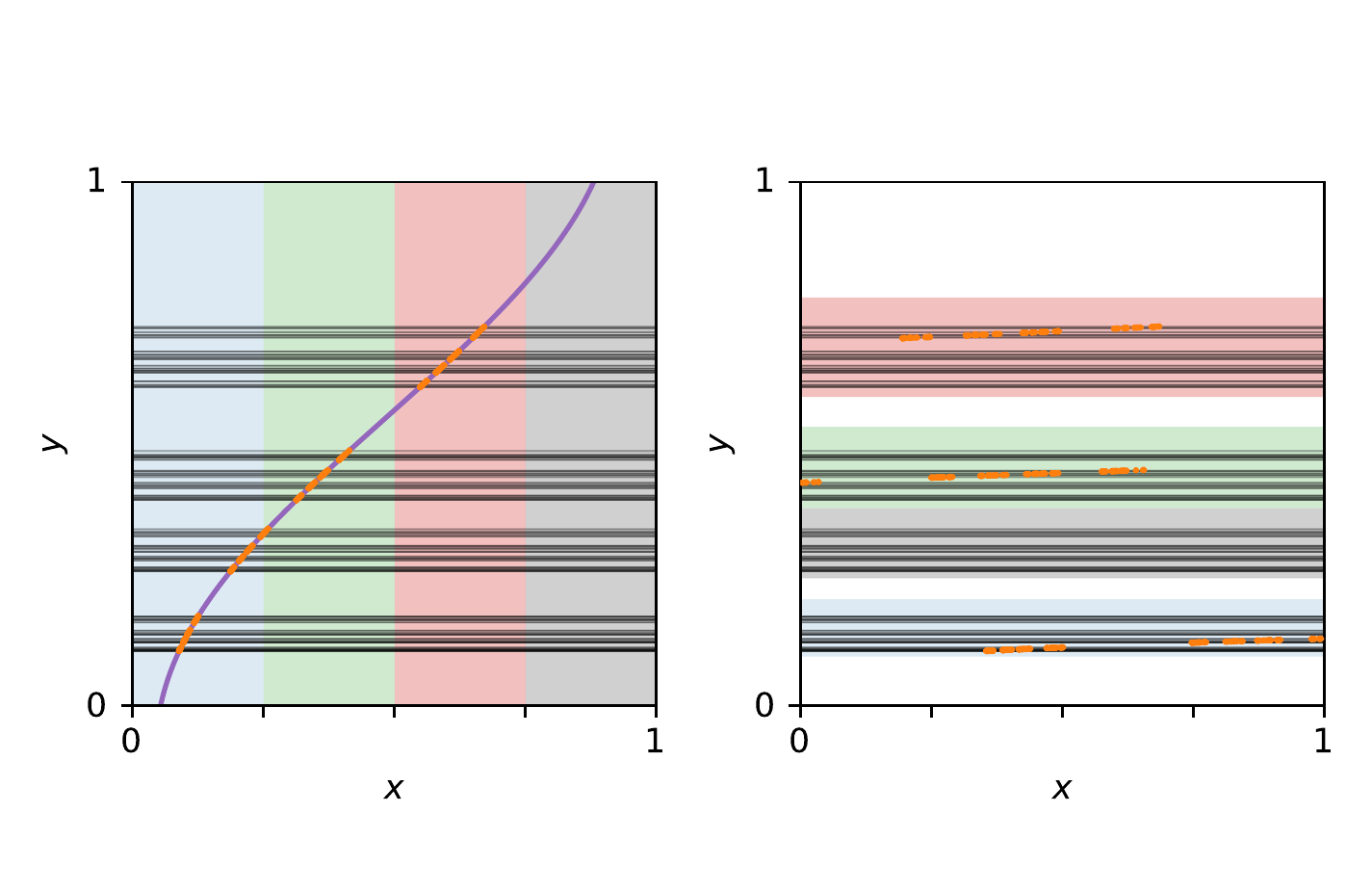}
		\caption{Picture of a baker's map of the form \eqref{eq:BakerMap} showing the attractor (black), an intersecting line (purple), and the action of the map on pieces of the domain (in pale colours), and on the conditional measure of the SRB measure on the intersecting line (orange).}
		\label{fig:Baker}
	\end{figure}
	
	Let us be as general as we can about the functions against which the conditional measures weakly converge back to the full measure. For $\alpha,\beta \in (0,1]$ we define the following norm on continuous functions $\phi: D \to \C$:
	\[ \| \phi \|_{\alpha;\beta} = |\phi|_{\alpha,x} + |\phi|_{\beta,y} + \|\phi\|_{L^\infty} \]
	where the directional H\"older semi-norms are given by
	\begin{align*}
	|\phi |_{\alpha,x} &= \sup_{x,x',y \in [0,1]} \frac{|\phi(x,y)-\phi(x',y)|}{|x-x'|^\alpha}\\
	|\phi |_{\beta,y} &= \sup_{x,y,y' \in [0,1]} \frac{|\phi(x,y)-\phi(x,y')|}{|y-y'|^\beta}.
	\end{align*}
	The Banach space $C^{\alpha;\beta}$ will then consist of all continuous functions $\phi: D \to \C$ with $\| \phi \|_{\alpha;\beta} < \infty$: that is, functions that are $\alpha$-H\"older in the $x$ direction and $\beta$-H\"older in the $y$ direction. In particular, the Banach space of $C^1$ functions is continuously embedded in $C^{\alpha;\beta}$.
	
	The following theorem, proved in Appendix~\ref{s:Baker}, gives that for certain baker's maps, exponential conditional mixing holds for all conditional SRB measures on a smooth foliation transversal to unstable lines:
	\begin{theorem}\label{t:Baker}
		Suppose that $\Psi(t;y) = (\psi(y) -t,y)$ is a foliation of some subset of $D$ for $|t| \leq t_* \in [0,1]$ and $y \in [0,1]$, and $\psi$ is $C^2$ with $\psi' \neq 0$. Suppose one of the following conditions hold:
		\begin{enumerate}[I.]
			\item The contractions $v_i$ are totally nonlinear\footnote{That is, no $C^2$ function exists conjugating all the $v_i$ simultaneously to linear functions.} and $C^2$ and $\cup_i v_i([0,1]) = [0,1]$. \label{opt:1}
			\item The contractions $v_i$ are totally nonlinear and analytic, as is $\psi$. \label{opt:2}
			\item The contractions are linear with $v_i(x) := \mu x + o_i$ for $o_i \in [0,1-\mu]$, and $\psi'' \neq 0$. \label{opt:3}
		\end{enumerate}
		
		Let $\rho$ be the (unique) SRB measure of a modified baker's map $b$ and let $\{\rho_t\}_{t\in[-t_*,t_*]}$ be the conditional measures of $\rho$ on the foliation. Then $(b,\rho)$ has exponential conditional mixing with respect to the level set of $H(x,y) = \psi(y)-t-x$ for all $t \in (-t_*,t_*)$.
		
		More specifically, there exists $d^* > 0$ such that for all $\gamma \in (1-d^*,1],\beta \in (0,1], \alpha \in (\gamma-d_*,1]$, there exist $C > 0$, $\xi \in (0,1)$ such that for all $t \in (-t_*,t_*)$, $A \in C^{\alpha;\beta}$, $B \in C^\gamma$, $n \in \N$,
		\[ | \rho_t(A\circ b^n\, B) - \rho_t(B) \rho(A) | \leq C \| A \|_{C^{\alpha;\beta}} \|B\|_{C^\gamma} \xi^n. \]
	\end{theorem}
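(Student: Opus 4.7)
The plan is to exploit the product structure of the SRB measure of $b$ and reduce exponential conditional mixing to a polynomial Fourier decay estimate for (a pushforward of) the self-similar measure of the contracting IFS.

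First I would observe that, since the baker's map expands uniformly in $x$ by $k$ and contracts in $y$, its SRB measure has the product form $\rho = \mathrm{Leb}_{[0,1]} \otimes \mu_\star$ where $\mu_\star$ is the self-similar measure for $\{v_i\}$ with equal weights $1/k$; in particular, by \eqref{eq:BakerSlice}, the conditional measure $\rho_t$ is the pushforward of $\mu_\star$ under $y \mapsto (\psi(y)-t,\,y)$. Writing $b^n(x,y) = (k^n x \bmod 1,\, V_n(x,y))$, the map $V_n(x,\cdot)$ is a length-$n$ composition of $v_i$'s determined by the first $n$ base-$k$ digits of $x$; its Lipschitz constant in $y$ is at most $\mu^n$, and it is locally constant in $x$ on each base-$k$ cylinder of length $k^{-n}$.

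Next I would Fourier-decompose $A$ in the expanding direction: for $A \in C^{\alpha;\beta}$ write $A(u,w) = \sum_{\ell \in \Z} \hat A_\ell(w)\, e^{2\pi i \ell u}$ with $\|\hat A_\ell\|_{C^\beta} \lesssim (1+|\ell|)^{-\alpha} \|A\|_{C^{\alpha;\beta}}$. Using the $y$-contraction of $V_n$ to replace its inner $y$-argument by a fixed reference point $y_0$ (up to an $O(\mu^{n\beta})$ error), the integral in the theorem decomposes as
\[
   \sum_{\ell\in\Z} e^{-2\pi i \ell k^n t} \int_0^1 g_{n,\ell}(y)\, e^{2\pi i \ell k^n \psi(y)} \,\dd\mu_\star(y),
\]
where $g_{n,\ell}(y) = \hat A_\ell(V_n(\psi(y)-t,\,y_0))\, B(\psi(y)-t,\,y)$ has polynomially-controlled Hölder regularity in $\ell$. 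The $\ell = 0$ term provides the main contribution $\rho(A)\rho_t(B) + O(\xi^n)$, recognised as a Riemann-type sum over length-$n$ cylinders whose weights equidistribute exponentially fast by classical mixing of the baker's transfer operator.

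The heart of the argument is the bound of the oscillatory integrals $\int e^{2\pi i \xi \psi(y)} g(y)\,\dd\mu_\star(y)$ with $\xi = \ell k^n \to \infty$; one needs polynomial Fourier decay $\lesssim \|g\|_{C^\eta}(1+|\xi|)^{-s}$. Under hypotheses \ref{opt:1} and \ref{opt:2}, total nonlinearity of the $C^2$ (resp. analytic) IFS yields polynomial Fourier decay for $\mu_\star$ itself via recent results on Fourier decay of self-similar measures, and $\psi' \neq 0$ alone suffices to transfer this decay to $\psi_*\mu_\star$. Under hypothesis \ref{opt:3} the IFS is linear, so $\mu_\star$ may well be a Bernoulli-convolution-type measure with no intrinsic Fourier decay; the decay is then obtained by a Kaufman-type theorem applied to $\psi_*\mu_\star$, exploiting $\psi'' \neq 0$ together with the positive Frostman regularity of $\mu_\star$. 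This lower bound on the Frostman exponent is the source of the constant $d^*$ in the theorem.

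To conclude, I would sum $|\hat A_\ell|\lesssim (1+|\ell|)^{-\alpha}$ against the Fourier decay $(1+|\ell|k^n)^{-s}$, splitting the series at $|\ell| \sim k^{(1-\delta)n}$ to obtain geometric decay in $n$ whenever $\alpha > \gamma - d^*$, as in the theorem. I expect the most delicate step to be ensuring that $g_{n,\ell}$ has uniform-in-$n$ Hölder regularity of the type required by the Fourier decay estimate: the discontinuities of $V_n$ at the cylinder scale $k^{-n}$ must be absorbed via a partition of unity or a smoothing matched to the base-$k$ partition of $[0,1]$, and tracking the loss of regularity through this decomposition is what ultimately forces the exponent constraint $\alpha > \gamma - d^*$.
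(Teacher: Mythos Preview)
Your overall plan matches the paper's: the product form of the SRB measure (your $\mu_\star$ is the paper's $\nu_0$, the equal-weights Gibbs measure of the contracting IFS), the reduction to polynomial Fourier decay of $\psi_*\nu_0$, and the sources for that decay (the Sahlsten--Stevens result for the totally nonlinear cases~\ref{opt:1},~\ref{opt:2}; Mosquera--Shmerkin, which is indeed a Kaufman-type theorem, for the linear case~\ref{opt:3}) are exactly what the paper uses. Absorbing the weight $B$ at the cost of losing $1-\gamma$ from the decay exponent is the content of the paper's Lemma~\ref{l:Prop8}.

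The gap is in the $y$-freezing step. You freeze $y=y_0$ inside the full $n$-fold composition $V_n$, leaving an amplitude $g_{n,\ell}$ carrying $k^n$ jumps. Those jumps sit at precisely the scale $k^{-n}$ of one period of the lowest nonzero phase $e^{2\pi i k^n\psi}$, so there is no scale separation: a partition of unity at the cylinder scale costs $k^n$ pieces each with H\"older norm blowing up like $k^{n\gamma}$, and the books do not balance. Relatedly, your handling of the $\ell=0$ term is circular: the claim that $y\mapsto V_n(\psi(y)-t,y_0)$ equidistributes towards $\nu_0$ when $y\sim\nu_0$ is itself an instance of the conditional mixing you are proving, not ``classical'' mixing for $b$ (the latter would give this only if the inner variable were Lebesgue-distributed).

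The paper resolves both issues with a single device you are missing: a time split $n=m+l$. One approximates $A\circ b^{n}$ by $A_{m,y_0}\circ\kappa^l$, where $A_{m,y_0}(x):=A(b^m(x,y_0))$ depends on $x$ alone and has only $k^m$ jumps (contraction error $\mu^{\beta m}$). Its Fourier coefficients are bounded as for a piecewise-H\"older function (jump contribution $k^m|j|^{-1}$, Lemma~\ref{l:HolderBVFourierDecay}), and precomposition with $\kappa^l$ pushes all nonzero modes to indices divisible by $k^l$; pairing against the Fourier decay of $B\cdot\psi_*\nu_0$ then gives an error $k^{m-l\eta}$. Crucially, the main term becomes $\int_0^1 A_{m,y_0}\,\dd x$, an integral against \emph{Lebesgue} rather than $\nu_0$, which by contraction plus $b$-invariance of $\rho$ equals $\rho(A)+O(\mu^{\beta m})$ with no equidistribution argument needed. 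Optimising $m/l$ gives the exponential rate.
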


	From this and Proposition~\ref{p:ExponentialHausdorffDistance}, exponential convergence in Hausdorff distance of the slice sets follow.
	\begin{corollary}\label{c:BakerCover}
		Under the conditions of Theorem~\ref{t:Baker}, there exist $C_1$ and $\xi_1$ such that for all $t \in (-t_*,t_*)$,
		\[ d_{\rm Haus}(b^n(\Psi(t;\R) \cap \Lambda_b),\Lambda_b) \leq C_1 \xi_1^n,\]
		where $\Lambda_b$ is the attractor of $b$.
	\end{corollary}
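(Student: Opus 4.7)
The plan is to apply Proposition~\ref{p:ExponentialHausdorffDistance} to the family of observation functions $H_t(x,y) = \psi(y) - t - x$, whose zero sets are precisely the leaves $\Psi(t;\R)$. Theorem~\ref{t:Baker} already furnishes exponential conditional mixing with constants $C,\xi$ that are \emph{uniform} in $t \in (-t_*,t_*)$, so the only missing hypothesis of Proposition~\ref{p:ExponentialHausdorffDistance} is the lower-Ahlfors regularity of the SRB measure $\rho$.

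I would verify lower-Ahlfors regularity by exploiting the semi-product structure of $\rho$. Along unstable (horizontal) leaves, $\rho$ disintegrates as Lebesgue measure, while its $y$-marginal $\nu$ is the equal-weight invariant probability measure for the IFS $\{v_1,\dots,v_k\}$. The assumptions of Theorem~\ref{t:Baker} --- disjoint open images $v_i((0,1))$ and bounded distortion --- place us in the setting of a strongly-separated $C^1$-conformal IFS. Standard results on self-conformal measures (Bowen's formula together with the Gibbs/Ahlfors-regularity property, see e.g.\ Mauldin--Urba\'nski) give that $\nu$ is Ahlfors regular of dimension $s$, where $s$ is the unique solution of the Bowen pressure equation. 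In case~\ref{opt:1}, where $\cup_i v_i([0,1]) = [0,1]$, $\nu$ is simply the invariant density of a piecewise expanding $C^2$ map with density bounded above and below (so $s=1$); in case~\ref{opt:3}, $\nu$ is a classical linear self-similar measure under the open set condition and Ahlfors regularity is immediate. Taking the product with Lebesgue on unstable leaves, $\rho$ is lower-Ahlfors regular of exponent $d = 1 + s$, with constant independent of $t$.

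Once both hypotheses of Proposition~\ref{p:ExponentialHausdorffDistance} are checked, the conclusion follows directly. Uniformity of $C_1, \xi_1$ in $t$ is obtained by inspecting the proof of Proposition~\ref{p:ExponentialHausdorffDistance}: every constant produced there is built only from the Ahlfors regularity constants of $\rho$ and from the conditional-mixing constants, neither of which depends on $t$ under our hypotheses.

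The only step of substance, rather than bookkeeping, is the Ahlfors regularity of $\nu$ in the genuinely fractal case (\ref{opt:2}), where $\cup_i v_i([0,1])$ is a proper subset of $[0,1]$ and the $v_i$ are nonlinear; this can be handled either by quoting Mauldin--Urba\'nski directly or by a direct thermodynamic-formalism argument using the Gibbs property of $\nu$ for the potential $-s\log|v_{i(\cdot)}'|$. All remaining steps amount to the already-quoted applications of Theorem~\ref{t:Baker} and Proposition~\ref{p:ExponentialHausdorffDistance}.
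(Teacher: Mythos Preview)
Your overall strategy matches the paper's exactly: reduce to Proposition~\ref{p:ExponentialHausdorffDistance}, use the product structure $\rho = \mathrm{Leb}\times\nu_0$, and verify lower-Ahlfors regularity of the $y$-marginal $\nu_0$. The uniformity-in-$t$ observation is also the same.

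Where you diverge is in the verification of lower-Ahlfors regularity. You split into the three cases of Theorem~\ref{t:Baker} and invoke Mauldin--Urba\'nski/Bowen's formula machinery. The paper instead gives a single elementary argument valid for all three cases: since $\nu_0$ is the equal-weight self-conformal measure, any cylinder $v_{\mathbf j}([0,1])$ of depth $M$ has $\nu_0$-mass exactly $k^{-M}$ and diameter at most $\mu^M$, so choosing $M \approx \log\delta/\log\mu$ gives $\nu_0(B(y,\delta)) \geq k^{-M} \gtrsim \delta^{\log k/\log(1/\mu)}$. This avoids any thermodynamic formalism and any case distinction. Your route is correct but heavier than needed; the paper's buys simplicity and self-containment.

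One small slip: in case~\ref{opt:1} you describe $\nu_0$ as ``the invariant density of a piecewise expanding $C^2$ map with density bounded above and below''. That is not right---$\nu_0$ is the \emph{measure of maximal entropy} of the associated expanding map (the equal-weight Gibbs measure), not the absolutely continuous invariant measure, and in general it is singular with respect to Lebesgue when the $v_i$ are nonlinear. Your conclusion survives because MMEs of such systems are still Ahlfors regular, but the stated reason is incorrect.
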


	Note that the classic piecewise affine baker's maps fall under condition~\ref{opt:3} of the theorem, although for conservative maps the application of Fourier dimension theory is unnecessary due to the Lebesgue-absolute continuity of $\rho$.

	It can also be seen from the proof that the constant $d_*$ is half the Fourier dimension of $\rho_t$ projected onto the $x$ coordinate. (A choice of test functions with more specific Fourier decay properties might yield $d_*$ to be exactly Fourier dimension). This Fourier dimension is bounded from above by the Hausdorff dimension of $\rho_t$, which, given the product structure of the measure, is easily seen also to be the stable dimension of the systems \cite{Schmeling98}. Thus, a larger stable dimension suggests conditional mixing holds with respect to increasingly less regular observables, with consequences for linear response theory \cite{Ruelle18, lozi}.
	
	In fact, for case~\ref{opt:3}, $d^*$ as well as the asymptotic rate of conditional mixing $\xi$ are independent of the conditioning foliation \cite[Theorem~3.1]{Mosquera18}.

	It is worth mentioning that it is also possible to decompose sufficiently regular (e.g. real analytic) curves that are tangent to stable or unstable manifolds away from the support of the attractor into a finite set of curves that uniformly avoid tangencies.
	\\
	
	To prove Theorem~\ref{t:Baker}, we use essentially two facts. The first fact, used in Lemma~\ref{l:Prop4}, is that the $x$ component of $b$ is a tupling map, whose action on Fourier coefficients of measures is well known. The second fact (Proposition~\ref{p:SRBMeasure}) is that the SRB measure is a product of uniform measure in the $x$ direction and a Gibbs measure (in fact, a measure of maximal entropy) of an iterated function system in the $y$ direction. This allows us to bring some recent results on Fourier dimension of Gibbs measures \cite{Mosquera18,Sahlsten20, Mosquera22}. 
	
	The Fourier dimension of Gibbs measures is an area in progress whose results have not yet been consolidated, hence the somewhat particular set of alternatives. We remark that if $B(x,y)$ depends only on $x$ then in case \ref{opt:3} the $\psi' \neq 0$ restriction can be dropped: that is, quadratic tangencies with stable manifolds (lines of constant $y$) are allowed here.

	While baker's maps' special structure (e.g. as skew products) allow us to find rigorous results for them by borrowing existing theory, it is not immediately clear how to mathematically generalise our results.
%	It is interesting to note that convergence happens for $A$ lying in an anisotropic H\"older space: dually, convergence of $b_*^n\rho_t$ to $\rho$ occurs in an anisotropic Banach space with differentiability of any negative order in the stable direction (as is typical for hyperbolic maps), but with differentiability strictly worse than $-\gamma_* < 0$ in the unstable direction. In fact, the proof reveals that $1-\gamma_*$ is the Fourier dimension of the measure of maximal entropy of $\nu$, so  

	\section{Numerical example: Lozi map}\label{s:LoziResults}
	
	To study maps with less structure we now therefore turn to rigorously justified numerics, and consider the commonly-studied and numerically amenable class of Lozi maps.
	
	These are piecewise hyperbolic affine maps $f: \R^2 \to \R^2$ with
	\begin{align}
	f(x,y) = (1 + y - a|x|, bx),\, b \neq 0. \label{eq:LoziDef}
	\end{align}
	
	For $a \in (1,2)$ and $b \in (0,\min\{a-1,4-2a\})$ the Lozi map $f$ has chaotic dynamics on a compact region in phase space: when additionally $b \in (0, \sqrt{2}(a - \sqrt{2}))$ this has a single mixing SRB measure \cite[Theorem~5]{Misiurewicz80}, all unstable manifolds have positive length when $b \in (0, a - \sqrt{2})$ \cite[Theorem]{Young85}. A Lozi attractor is shown in Figure~\ref{fig:CondMeasure}.
	
	Lozi maps are continuous, with a jump in the Jacobian across the singularity set $\S = \{x = 0\}$. %Discontinuities in the unstable manifolds (hence unstable directions) are found in the forward orbit of $\S$, and in the stable manifold in the backward orbit of $\S$ \cite{lozi}.
	
	In a similar fashion to those we defined for the baker's map (Proposition~\ref{p:ConditionalMeasures}), conditional measures of the SRB measure $\dd\rho(x,y)$ on sets $\ell_{x_0} := \{x=x_0\}$ are well-defined for all $x_0 \in \R$ intersecting the support of the Lozi attractor \cite[Theorem~2.1%\ref{lozi-t:SliceMeasures}
	]{lozi}. Let us notate these conditional measures as $\dd\rho(y \mid x_0)$. These conditional measures can be expected to have Hausdorff dimension strictly between $0$ and $1$: in particular, they lack any manifold structure. A histogram of $\dd\rho(y \mid 0) = \dd\rho(y \mid (x,y) \in \S)$ is plotted in Figure~\ref{fig:CondMeasure}: linear response for the Lozi map is determined from the mixing properties of the conditional measure on $\S$ \cite{lozi}, so we will be most interested in this particular conditional measure.

	We specifically conjecture that measures $\rho(\cdot\mid x_0)$, when pushed forward under the Lozi map $f$, converge back to the full SRB measure $\rho$, and that this convergence happens at an exponential rate:
	\begin{conjecture}\label{c:Reduced}
		For generic Lozi parameters $(a,b)$ and Lebesgue-almost all $x_0 \in \R$, the Lozi map has conditional mixing with respect to level curves $x = x_0$ (i.e. the measures $\rho(\cdot \mid x_0)$.
	\end{conjecture}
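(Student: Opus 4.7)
The plan is to follow the blueprint of Theorem~\ref{t:Baker}, replacing the explicit product structure of the baker's SRB measure with the unstable lamination of the Lozi attractor. First, I would use a Young-style Markov extension or the Misiurewicz coding of the Lozi map to disintegrate $\rho$ into a measure on unstable leaves together with a transverse Gibbs-type measure on a Cantor coding space. Granted the existence of $\rho(\cdot\mid x_0)$ from \cite[Theorem~2.1]{lozi}, this conditional measure is recovered by weighting each unstable leaf $\gamma$ crossing $\ell_{x_0}$ by a transverse angle factor and taking the point mass at $\gamma\cap\ell_{x_0}$; in this way $\rho(\cdot\mid x_0)$ becomes the holonomy image of an explicit (though singular) measure on the transverse coding parameter.

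Second, following the baker's case, I would expand smooth test functions in Fourier modes along the unstable direction and track how the Fourier transform of $f^n_*(B\,\rho(\cdot\mid x_0))$ behaves. Iteration stretches unstable leaves by a factor growing like $\lambda_u^n$, so Fourier frequency $k$ transverse to $f^n_*\rho(\cdot\mid x_0)$ corresponds, after pulling back by the inverse holonomy, to a frequency of order $\lambda_u^n k$ acting on the pulled-back transverse measure. The problem thereby reduces to obtaining quantitative Fourier decay for the transverse measure against a family of piecewise-smooth, nonlinear phase functions coming from the inverse holonomy.

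The hard part will be establishing this Fourier decay. In the baker's setting the transverse measure is self-affine and decay follows from the Fourier-dimension theory of Gibbs measures on iterated function systems \cite{Mosquera18,Sahlsten20,Mosquera22}; in the Lozi setting the transverse measure is neither self-similar nor supported on a product-structured Cantor set, and the phase functions are only piecewise $C^2$ because of the singular line $\S$. I would attempt this step via a nonlinear projection and sum-product estimate in the spirit of Bourgain--Dyatlov or of S\"ahlsten and collaborators, exploiting that for generic $(a,b)$ the derivative cocycle is sufficiently nonconformal and non-Diophantine and that the Markov extension endows the transverse measure with an approximate product structure along stable holonomies. The genericity hypothesis on $(a,b)$ and the restriction to a.e.\ $x_0$ enter precisely here, to exclude resonances in the cocycle and algebraic alignment between the singularity orbit and the slice. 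Given such Fourier decay, exponential conditional mixing would then follow from the standard exponential equidistribution of long unstable curves against $\rho$ combined with the Fourier-mode bound.
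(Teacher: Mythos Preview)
The statement you are addressing is labelled a \emph{Conjecture} in the paper, and the paper does not contain a proof of it: Section~\ref{s:LoziResults} supplies only numerical evidence (Figure~\ref{fig:Conjecture2}) via validated sampling of the segment dynamics described in Appendix~\ref{s:Algorithms}. There is therefore no paper proof to compare your proposal against; what you have written is a proof \emph{programme} for an open problem, not a proof.

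As a programme your outline is reasonable in spirit---it correctly identifies that the baker's argument rests on (i) a product/skew decomposition of $\rho$ and (ii) polynomial Fourier decay of the transverse Gibbs measure, and that any Lozi analogue must replace both ingredients. But the proposal has a genuine gap at exactly the step you flag as ``the hard part''. For the Lozi map the transverse measure on the coding space is not a Gibbs measure of a finite $C^2$ IFS: the Markov extension has countably many branches, the contraction rates are not constant along stable pieces, and the phase functions arising from inverse holonomy are only piecewise affine with a countable singularity set generated by the forward orbit of $\S$. None of the Fourier-dimension results you cite (\cite{Mosquera18,Sahlsten20,Mosquera22}, Bourgain--Dyatlov) currently apply in that setting, and invoking a ``nonlinear projection and sum-product estimate'' without specifying which nonconcentration or non-Diophantine hypothesis the Lozi cocycle actually satisfies for generic $(a,b)$ is precisely the missing idea. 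Until that Fourier-decay input is established---or replaced by some other mechanism for decorrelating the sliced measure from characters---the argument does not close, which is why the paper leaves the statement as a conjecture.
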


	We have strong and direct numerical evidence in favour of this conjecture: Figure~\ref{fig:Conjecture2} shows exponential decay of the correlation
	\begin{equation} \int A\circ T^n\,\dd\rho(\cdot\mid 0) - \int A\,\dd\rho \int \dd\rho(\cdot \mid 0) \label{eq:ReducedConjecture}\end{equation}
	by four orders of magnitude, with reliable error quantification. In fact, it seems that $A$ need only be piecewise $C^\Aord$. The consequence of Conjecture~\ref{c:Reduced} holding on the singular line $x=0$ (up to a technical generalisation to second-order mixing), we will show in \cite[Theorem~2.3%\ref{lozi-t:Formal}
	]{lozi}, is that the Lozi map a formal linear response to bounded dynamical perturbations \cite{lozi}.

	\begin{figure}[t]
		\centering
		\includegraphics{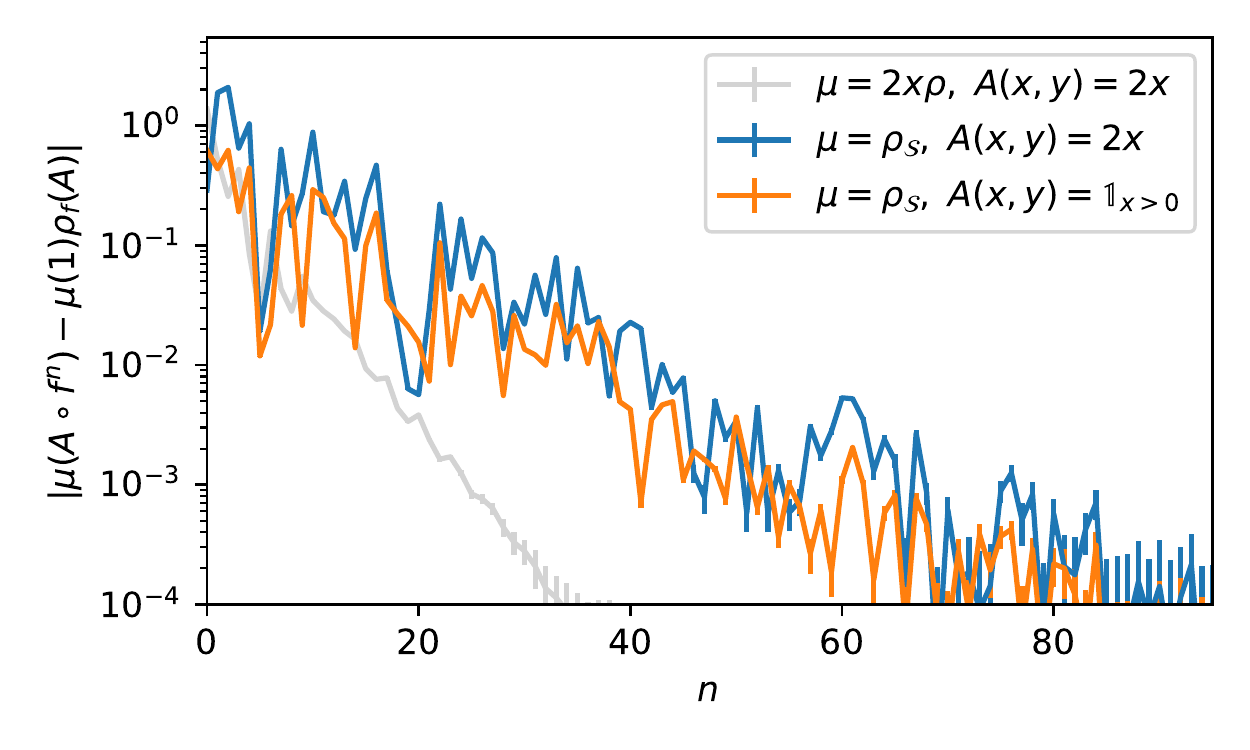}
		\caption{In orange and blue, for the parameters of the Lozi map $a = 1.8$ and $b = 0.35$, $|\condmeas(A \circ f^n) - \condmeas(1) \rho(A)|$ plotted for varying $n$ and two different observables $A$, where $\condmeas = \rho(\cdot\mid 0)$. The measure was sampled using $100$ time series of $10^7$ iterates of the segment dynamics $\vec f$, implemented in high-precision (196 bit) validated interval arithmetic. Details of the algorithm are given in Appendix~\ref{s:Algorithms}.
			Two forms of error are plotted: with error bars, 99\% confidence intervals for the sampling error, and with block error (not visible), the error arising from the interval arithmetic. 
		In grey, decay of autocorrelations for a smooth observable $A(x) = 2x$ against the SRB measure, using $100$ time series of $10^8$ iterates of the full dynamics $f$.%In orange, a line of exponential decay.
		}
		\label{fig:Conjecture2}
	\end{figure}
	
	It should be noted that obtaining valid samples of the quantities in \eqref{eq:ReducedConjecture} is tricky. To begin with, we are sampling from the conditional measure $\rho(\cdot\mid x_0)$, which is a codimension-one object on the attractor. We then must iterate forward under the Lozi map, which is chaotic and thus unstable. To validly perform this sampling rigorously, and quantify the associated numerical error, we have developed novel algorithms, presented in Appendix~\ref{s:Algorithms}. These algorithms could, we imagine, with care be extended to general hyperbolic dynamics.

\section{Conclusion}\label{s:Conclusion}

While in a numerical example, one can only consider a particular case, our results on baker's maps suggest that conditional mixing is a very robust property. Indeed, Theorem~\ref{t:Baker} shows that for an open dense set of such baker's maps (perhaps all baker's maps), conditional mixing holds on an open dense set of analytic curves. While these skew product maps are rather special, it seems from the Fourier dimension results that conditional mixing occurs when structure is {\it broken}, and so the situation might yet be better for conditional mixing in more general maps. To this end, we make the following conjecture:
\begin{conjecture}
	For all analytic Anosov diffeomorphisms on compact surfaces, conditional mixing holds on an open dense set of functions $H$ with no critical points on their zero level set.
\end{conjecture}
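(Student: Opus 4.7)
The plan is to mimic the baker's map strategy of Theorem~\ref{t:Baker}: use a Markov partition to reduce conditional mixing to decay of oscillatory integrals against a transverse Gibbs-type measure, then invoke Fourier-decay results for Gibbs measures of analytic hyperbolic systems in the spirit of \cite{Mosquera18,Sahlsten20,Mosquera22}.

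First I would take a Markov partition $\{R_j\}$ for $T$ of sufficiently small diameter. On each rectangle, the SRB measure $\rho$ disintegrates as a product of an absolutely continuous conditional along unstable leaves with a transverse Gibbs measure $\nu_j$ on a stable-leaf transversal $\tau_j$; for analytic Anosov surface maps the invariant foliations' holonomies are smooth enough that this product structure has analytic regularity transversely, up to an absolutely continuous Jacobian. The open dense condition on analytic $H: M \to \R$ can be arranged so that $\ell_H$ is uniformly transverse to both invariant foliations on $\supp \rho$ and moreover has uniformly non-degenerate curvature relative to the stable foliation; this second-order condition plays the role of $\psi'' \neq 0$ in case~\ref{opt:3} of Theorem~\ref{t:Baker}. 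Under it, $\ell_H \cap R_j$ is a finite union of analytic arcs, each identified via stable holonomy with a piece of $\tau_j$, so that the conditional measure $\mu$ is, up to an analytic Jacobian, the sum of pushforwards of the $\nu_j$.

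Expanding $\int A\circ T^n\, B\, \dd\mu$ over unstable cylinders of length $n$ and applying distortion control, the correlation reduces to a sum over length-$n$ words of integrals against pushforwards of $\nu_j$ with oscillatory phases produced by how $T^n$ aligns $\ell_H$ relative to the unstable direction. The non-degenerate curvature guarantees these phases grow genuinely with $n$, and exponential decay then follows from a Fourier-dimension bound $|\hat\nu_j(\xi)| \leq C|\xi|^{-\eta}$, $\eta > 0$, applied as in the proof of Theorem~\ref{t:Baker}.

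The main obstacle is precisely that input: polynomial Fourier decay for transverse Gibbs measures of general analytic Anosov surface diffeomorphisms is not yet available in the literature in the generality required, although it appears to be the natural next step beyond existing Fourier-decay results for self-conformal and self-affine Gibbs measures, and indeed a partial result in this direction is essentially what underlies Theorem~\ref{t:Baker}. A secondary difficulty is that one needs the Fourier decay uniform as $H$ varies in order to get openness rather than just density, and standard but tedious care is required for the boundary contributions of the Markov partition and for the tangency locus of $\ell_H$ with the stable foliation, which is the main reason the statement must be restricted to an open dense set rather than all admissible $H$.
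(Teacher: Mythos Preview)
The statement you are trying to prove is labelled a \emph{Conjecture} in the paper and is not proven there; it is offered in the conclusion as a natural extrapolation from the baker's map results. So there is no proof to compare against, and your proposal should be read as a heuristic programme rather than a proof.

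As a programme, your outline follows the natural route the paper's own results suggest, and you correctly identify the principal missing ingredient: polynomial Fourier decay of the transverse (stable) Gibbs measure in this generality is not presently known. That alone already makes this a sketch of a strategy, not a proof.

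There is, however, a further genuine obstruction in your sketch that you should flag. You assert that for analytic Anosov surface maps ``the invariant foliations' holonomies are smooth enough that this product structure has analytic regularity transversely''. This is not correct: even for analytic Anosov diffeomorphisms on surfaces, the stable and unstable foliations are generically only $C^{1+\alpha}$ (and in particular not $C^2$, let alone analytic). Consequently the conjugacy sending your transversal $\tau_j$ to a reference interval, and the Jacobians appearing when you push $\nu_j$ along stable holonomy onto $\ell_H$, are at best $C^{1+\alpha}$. The Fourier-decay theorems you cite (\cite{Sahlsten20,Mosquera22}) need analytic or at least $C^2$ data on the iterated function system and the phase, so even if a suitable Fourier-dimension result for the transverse measure existed, the low regularity of the holonomies would obstruct the step where you transport it onto $\ell_H$ while preserving polynomial Fourier decay. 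This is a structural difficulty beyond the skew-product baker's-map setting, where the stable foliation is trivially smooth, and it is part of why the paper leaves the general Anosov case as a conjecture rather than a theorem.
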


On the other hand, it is interesting that the exponential rates of decay for the conditional measures are substantially slower than for smooth observables against the full SRB measure: for example, in Figure~\ref{fig:Conjecture2} the rate of exponential convergence for a smooth observable $A(x,y) = x$ is much slower when the initial measure $\mu$ is a conditional measure rather than for the full SRB measure $\mu \sim \rho$. Furthermore, it appears that at least for the baker's map in some cases that this decay rate is independent of the conditioning submanifold (see discussion in Section~\ref{s:Baker}). In a related fashion, mixing rates against the SRB measure, which depend on the essential spectrum of the transfer operator in relevant function spaces, tend to be slower for smaller H\"older regularities of observables \cite{Baladi92}: thinking of the conditional measure $\rho(x\mid H(x) = 0)$ as equivalent to the SRB measure multiplied by a distribution $\delta(H(x))$ provides some connection between these two phenomena. Therefore, while the main connection we have seen appears to be to Fourier dimension, perhaps an appropriate functional analytic approach could yield fruit in studying this basic property of a chaotic system.

One might also ask what happens when the codimension of the conditioning submanifold is increased (in our study it has always been one). This is natural for the Bayesian filter problem since one typically makes repeated observations when observing a system: we expect to end up with, say, a vector of one-dimensional observations $y = (H(x), H(f(x)), H(f^2(x)),\ldots, H(f^m(x)))$.  If this dimension $m$ is no greater than the unstable dimension (i.e. the number of positive Lyapunov exponents), then the construction of the conditional measure as in Proposition~\ref{p:ConditionalMeasures} and \cite[Theorem~2.1%\ref{lozi-t:SliceMeasures}
]{lozi} will go through, and we might feel empowered to say we expect conditional mixing to hold generically. On the other hand, if $m$ is more than twice the box-counting dimension of the attractor, then the probabilistic Takens embedding theorem would tells us that $y$ specifies $x$ exactly $\rho$-almost surely \cite{Baranski20}. In the intermediate case where $M$ lies between the unstable dimension and the attractor dimension, it could be that some kind of generic intersection property \`a la blenders \cite{Bonatti16} holds to give a conditional set of positive fractal dimension, which could also allow for some kind of conditional mixing. Numerical study in higher-dimensional systems may shed light on the situation.

\appendix

\section{Simulation of Lozi map dynamics}\label{s:Algorithms}

Rather than attempt to compute deterministic estimates of these systems we will proceed by Birkhoff--Monte Carlo sampling of the quantities we are interested in. Because we need to sample from measures $\rho(\cdot\mid x_0)$ conditioned on a codimension-1 manifold, it will be necessary to simulate not point dynamics but dynamics on sets of higher dimensions, the natural choice being local unstable manifolds. Helpfully, because the Lozi map is piecewise affine, local unstable manifolds are straight line segments. A ``segment dynamics'' is proposed in Section~\ref{ss:Segment} and a numerical implementation given in Section~\ref{ss:SegmentSim}.

However, we would like to be sure we are not merely percieving artifacts of sampling error or numerical imprecision: to achieve this, we will also need to quantify the statistical and deterministic errors associated with our numerical simulations. Section~\ref{ss:SegmentStable} gives, surprisingly, a stable algorithm to simulate the chaotic Lozi dynamics that is compatible with validated interval arithmetic, and Section~\ref{ss:CLT} explains the quantification of random sampling errors.

\subsection{Segment dynamics}\label{ss:Segment}

Let $\Lambda$ be the attractor of the Lozi map $f$, and for points $p$ whose orbits do not intersect the singular line $\S:= \{x = 0\}$,  define the local unstable manifold of a point $p \in \Lambda$ to be
\[ \Wul(p) := \left\{ q \in \Lambda: \lim_{n\to\infty} |f^{-n}(q) - f^{-n}(p)| = 0, \forall n \in \N^+\ \sigma_{f^{-n}(p)} = \sigma_{f^{-n}(q)}\}\right\}. \]
where $\sigma_{(x,y)} := \sign x$. These are segments of the full unstable manifolds which have always remained on the same side of the singular line $\S$.

Let $\vec{\mathcal{G}}$ be the set of directed open line segments in $\R^2$, i.e., open intervals where start and end points are distinguished. Then we can define a set of directed local unstable manifolds
\[ \vec \L = \left\{ \vec I \in \vec{\mathcal{G}} : \exists p \in \Lambda\ I = \Wul(p) \right\}, \]
which captures $\rho$-almost all local unstable manifolds, since Lozi maps are piecewise affine, and almost all unstable manifolds are of positive length.

Let us also define the following product space
\[ \vec \Lambda = \vec \L\times (0,1),\]
which we are going to use to parametrise each $\vec I \in \vec \L$. $\vec\Lambda$ is almost everywhere a two-to-one cover of $\Lambda$ by the map $\pi(\vec I_{p,q},t) = (1-t)p+tq$, where we denote the directed segment from point $p$ to point $q$ by $\vec I_{p,q}$.

As a result, up to a set of $\rho$-measure zero, we can lift the $f$-dynamics to $\vec \Lambda$, by a map of the form
\begin{equation} \vec f(\vec I, t) = (f(\vec I \cap \M_{\pi(\vec I,t)}), \lambda_{\vec I}(t)), \label{eq:VecFDynamics}\end{equation}
where $\lambda_{\vec I}: [0,1] \circlearrowleft$ is a full-branch expanding interval map. Let us define this a little more explicitly.

When $\vec I_{p,q} \cap \S$ is non-empty, then we know it has exactly one element which we denote by $s \in \S$ with $s = \pi(\vec I_{p,q},t_*)$ for some $t_* \in (0,1)$. The segment dynamics $\vec f$ can then be written explicitly as
\begin{equation}
\vec f(\vec I_{p,q},t) = \begin{cases} (\vec I_{f(p),f(q)},t), & \vec I_{p,q} \cap \S = \emptyset \\
(\vec I_{f(p),f(s)},t/t_*), & \vec I_{p,q} \cap \S \neq \emptyset \textrm{ and } t < t_* \\
(\vec I_{f(s),f(q)},(t-t_*)/(1-t_*)), & \vec I_{p,q} \cap \S \neq \emptyset \textrm{ and } t > t_*.
\end{cases}
\label{eq:VecFDynamicsExplicit}
\end{equation}

It turns out that the SRB measure $\rho$ also can be lifted to an invariant measure of $\vec f$ by
\[ \int_{\vec \Lambda} \Psi\, \dd\vec \rho = \int_\Lambda \frac{\sum_{(\vec I,t) \in \pi^{-1}(x)}\Psi(\vec I,t)}{2} \dd\rho(x), \]
because $\pi^{-1}$ is two-to-one $\rho$-almost everywhere we have that $(\vec \Lambda, \vec f, \vec \rho)$ has most two ergodic components (which are identical up to reversing the direction of the segments), and we will be able to sample $\vec\rho$ by iterating $\vec f$.

In defining a stable numerical method it will be useful to us that almost every point's local unstable manifold has endpoints originating from the singular line:
\begin{proposition}\label{p:CriticalOrbitSingularities}
	The set
	\[ \left\{(\vec I_{p,q},t) \in \vec\Lambda : p,q \in \cup_{n =1}^\infty f^n(\S) \right\}\]
	has full $\vec\rho$ measure.
\end{proposition}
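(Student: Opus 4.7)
The plan is to trace each endpoint of $\vec I_{p,q}$ backwards through $\vec f^{-1}$ and to show that each was ``freshly born'' from a splitting event with $\S$ at some finite past time, $\vec\rho$-almost surely. The key observation is that, from \eqref{eq:VecFDynamicsExplicit}, whenever $\vec I \cap \S = \{s\}$ with $s = \pi(\vec I, t_*)$, the image $\vec f(\vec I, t)$ has $f(s) \in f(\S)$ as one of its endpoints: the right endpoint if $t < t_*$, and the left endpoint if $t > t_*$. Between crossings, both endpoints simply iterate forward by $f$.

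I would then introduce the measurable sets
\[ L := \{(\vec I, t) \in \vec\Lambda : \vec I \cap \S \neq \emptyset,\ t < t_*(\vec I)\}, \quad R := \{(\vec I, t) \in \vec\Lambda : \vec I \cap \S \neq \emptyset,\ t > t_*(\vec I)\}, \]
where $t_*(\vec I) \in (0,1)$ denotes the position of the $\S$-crossing on $\vec I$. A direct induction using \eqref{eq:VecFDynamicsExplicit} shows: if $n^* \geq 1$ is the smallest integer with $\vec f^{-n^*}(\vec I,t) \in R$, then the left endpoint of $\vec I$ equals $f^{n^*}(s)$ for some $s \in \S$, and therefore lies in $\cup_m f^m(\S)$. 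Likewise, the existence of a smallest $n \geq 1$ with $\vec f^{-n}(\vec I, t) \in L$ places the right endpoint in $\cup_m f^m(\S)$.

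It thus suffices to show that $\vec\rho$-a.e.\ backward orbit visits both $L$ and $R$. Two ingredients are needed. First, $\vec f$ is $\vec\rho$-a.e.\ invertible on $\vec\Lambda$: since $f$ is a homeomorphism of $\R^2$, $f^{-1}(\vec J)$ is a well-defined segment lying on a single side of $\S$, and it extends uniquely to the local unstable manifold $\vec I$ through any of its points, with the distinguished parameter recovered from $\pi(\vec I, t) = f^{-1}(\pi(\vec J, s))$. Second, $\vec\rho(L), \vec\rho(R) > 0$: for the parameter range considered, $\rho$-a.e.\ point has a local unstable manifold of positive length by \cite[Theorem]{Young85}, and since $\S$ cuts transversally through the attractor, the set of $\vec I$ crossing $\S$ has positive $\vec\rho$-measure; along any such segment, the conditional SRB density on the unstable leaf assigns positive $\vec\rho$-mass to both sides of $t_*$. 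Poincar\'e recurrence applied to the $\vec\rho$-preserving invertible map $\vec f^{-1}$, with $L$ and $R$, then yields that a.e.\ backward orbit visits each infinitely often, proving the proposition.

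The main potential obstacle is ingredient (ii): in particular, the full-support claim for the conditional distribution of $t$ along crossing segments requires unpacking the definition of $\vec\rho$ in terms of the SRB measure and the two-to-one cover $\pi$, together with the fact that the conditional SRB density on any unstable leaf assigns positive mass to every open sub-arc of the leaf.
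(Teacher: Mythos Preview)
Your strategy is essentially the paper's: the paper's observable $P$ satisfies $\{P=1\}=R$ in your notation, and both arguments reduce the proposition to showing that $\vec\rho$-a.e.\ backward orbit visits $R$ (for the endpoint $p$) and, by direction-reversal symmetry, $L$ (for $q$).

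The genuine gap is in your final step. Poincar\'e recurrence only guarantees that a.e.\ point \emph{already in} $R$ returns to $R$; it does \emph{not} say that a.e.\ point of $\vec\Lambda$ ever visits $R$. For that you need ergodicity (equivalently, the Birkhoff theorem) on each ergodic component, which in turn requires $R$ to have positive measure \emph{in every ergodic component} of $(\vec\Lambda,\vec f,\vec\rho)$, not merely globally. Your transversality sketch for $\vec\rho(L),\vec\rho(R)>0$ does not address this per-component issue, and Poincar\'e recurrence cannot bridge it.

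The paper closes the gap differently. Recalling that $(\vec\Lambda,\vec f,\vec\rho)$ has at most two ergodic components, related by direction reversal, it fixes one such component, selects within it a positive-measure set $E$ of segments of length greater than some $\epsilon>0$ (such segments exist since a.e.\ local unstable manifold has positive length), and then invokes a uniform unstable-expansion estimate ($|f^n I|\geq C\lambda^n|I|$ along segments not yet cut) to force every segment in $E$ to meet $\S$ within a bounded number $m_*$ of forward iterates. This produces, \emph{inside the given ergodic component}, a positive-measure set on which $P=1$; the Birkhoff theorem applied to $\vec f^{-1}$ then makes the backward time average of $P$ positive a.e., whence $p\in\cup_n f^n(\S)$ a.e.\ on that component. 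The direction-reversal isometry transfers the conclusion to $q$. Replacing your Poincar\'e step by this expansion-plus-Birkhoff argument would repair your proof.
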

This proposition is proved in Appendix \ref{a:CritOrbitSing}.

\subsection{Simulation of segment dynamics}\label{ss:SegmentSim}

%To effectively sample $\rho(\cdot\mid x_0)$, which is supported on a codimension-1 set, we will need to simulate the segment dynamics $\vec f$ on the dimension-1 local unstable manifolds. 
Given our interval dynamics, we can sample the conditional measures using $\rho$ via the function $\kappa_{x_0}(\vec I) := \vec I \cap \{x = x_0\}$ and $\ell(\vec I) = |\vec I|$ the length of the segment. Because the SRB measure is uniformly distributed along unstable manifolds, we have that
\begin{equation} \int_{\vec\Lambda} A(x_0,y)\,\dd\rho(y\mid x_0) = \frac{\int_{\vec\Lambda} A\circ \kappa_{x_0}/\ell\, \dd\vec{\rho}}{\int_{\vec\Lambda} 1/\ell\, \dd\vec{\rho}} \label{eq:RhoSExpectation}
\end{equation}
assuming that no contribution to the sum is made when $\kappa_{x_0}(\vec I) = \emptyset$ (i.e, $\vec I$ does not intersect with the line we want to sample a conditional measure from). Then, for $\vec{\rho}$-almost-all starting values $(\vec I_0,t_0)$ we can estimate these expectations via a Birkhoff sum 
\begin{equation}
\int_{\vec\Lambda} \Psi\,\dd\vec\rho = \lim_{N\to\infty} \frac{1}{N} \sum_{n=0}^{N-1} \Psi(\vec f^n(\vec I_0,t_0))), \label{eq:RhoSBirkhoff}\end{equation}
with the segment dynamics that can be simulated using \eqref{eq:VecFDynamicsExplicit}. The convergence \eqref{eq:RhoSBirkhoff} for the $\Psi$ we are interested in uses the Birkhoff ergodic theorem and the fact that $\kappa_{x_0}, \ell$ do not depend on the direction of the interval (so the ergodic component of $\vec\Lambda$ we sample from is immaterial).

Because from \eqref{eq:VecFDynamicsExplicit} the $t$ dynamics are generated by full-branch interval maps that preserve Lebesgue measure, the branch dynamics are Markovian, with transition probabilities that are explicitly given. Because of this, the random dynamics
\begin{equation}
\vec f(\vec I_{p_{n+1},q_{n+1}},T_{n+1}) = \begin{cases} (\vec I_{f(p_n),f(q_n)},T_{n}), & \vec I_{p_n,q_n} \cap \S = \emptyset \\
(\vec I_{f(p_n),f(s_n)},T_{n}) \textrm{ with probability } t_*, & \vec I_{p_n,q_n} \cap \S \neq \emptyset \\
(\vec I_{f(s_n),f(q_n)},T_{n}) \textrm{ with probability } 1- t_*, & \vec I_{p_n,q_n} \cap \S \neq \emptyset.
\end{cases}
\label{eq:VecFDynamicsImplicit}
\end{equation} 
generate the $\vec f$ dynamics at equilibrium, where the $T_n$ are uniformly distributed, dependent hidden variables which are given by
\[ T_{n+1} = \lambda_{\vec{I}_{p_n,q_n}}(T_n).\] 
To sample that $\vec I$ we do not actually need to know the $T_n$, but they can be reconstructed from a time series of the segments by sampling the final value $T_{n_{\rm final}} \sim \mathrm{Uniform}(0,1)$ and iterating backwards according to (\ref{eq:VecFDynamicsExplicit}): inverting the $\lambda_{\vec I}$ yields a contraction. This means we do not have to directly simulate an expanding map (which would have been problematic for rigorously validated simulation of the dynamics). In practice this is an very effective way to simulate the segment dynamics.

\subsection{Validated numerical implementation of segment dynamics}\label{ss:SegmentStable}

However, computers can only encode real numbers to finite precision. Thus, at every computational step the results must be rounded to a given tolerance, introducing small errors, which may invalidate fine numerical results such as we wish to obtain. Validated interval arithmetic provides a vehicle to quantify the errors, but to use it we must first produce a deterministically stable algorithm to simulate a generic long chaotic time series. In this subappendix will we present such an algorithm, introducing first the notion of interval arithmetic.

Let the set of closed intervals in $\R$ be $\mathcal{I}$. The idea of validated interval arithmetic is to represent real numbers $\alpha$ by an interval $\mathfrak{a} \in \mathcal{I}$ such that we know $\alpha \in \mathfrak{a} \subset \R$. Such an interval $\mathfrak{a}$ is given by its upper and lower bounds, and we can restrict the set of allowed intervals so these upper and lower bounds are representable in the finite precision computer encoding. A function $g: \R^d \to \R^e$ can be implemented in validated arithmetic through a function $\mathfrak{g}: \mathcal{I}^d \to \mathcal{I}^e$ such that $\mathfrak{g}(\mathfrak{a})$ will always contain $g(\mathfrak{a})$. In such a way one can be absolutely certain that $\alpha$ is contained in a set $\mathfrak{g}(\mathfrak{a})$ and so on.

%In chaotic dynamics such as $f$, all errors are then magnified exponentially under propagation by $f$. Numerically validated simulation of a long chaotic time series is therefore impossible by applying $f$ naively. In fact, we will need to make a transformation of phase space 

The unstable manifolds $\vec I_{p,q}$ are defined by their endpoints, which update under the chaotic dynamics $f$. By definition, $f$ is exponentially stretching, making it very difficult in general to obtain a rigorously validated time series. However, the fact that we are constantly resetting the segment endpoints to the critical line (\ref{eq:VecFDynamicsExplicit}) makes efficiently obtaining such a time series quite possible.

We will store our segments $\vec I_{p,q}$ as $Q \vec I_{p',q'}$, where $Q$ is an orthogonal transformation of $\R^2$ (i.e. a rotation matrix), and $p',q' \in \R^2$ have identical second coordinate. (Of course, an segment can be stored as a $2\times 2$ matrix of its endpoints' coordinates.) Thus, $Q$ rotates the phase space so that the unstable direction on the segment is along the first coordinate. If our segment at the next step is $\vec I_{p_1,q_1} = Q_1 \vec I_{p_1',q_1'}$, we do not compute the quantities on the right-hand side from $f$ explicitly, but rather, since we have on $f^{-1} (\vec I_{p_1,q_1})$ that the Lozi map is affine, having for some $J$ that
\[ f(x) = J x + e_1, \]
we make the QR decomposition
\begin{equation} Q_1 R_1 = J Q, \label{eq:QR}\end{equation}
where $Q_1$ is a rotation matrix and $R_1$ is upper triangular, and set 
\[ \vec I_{p_1',q_1'} = R_1 \vec I_{p',q'} + Q_1^\top e_1. \]
This means the dynamics in the endpoints' shared second coordinate is contracting (and thus numerically stable), as in fact are the dynamics of $Q$, and that except for the (discrete) choice of branch $\M_{\pm}$, these are both independent of the points' first coordinates. This means the second coordinate as well as the rotation matrix $Q$ can be stably approximated in interval arithmetic\footnote{It is however necessary to explicitly code the QR decomposition (\ref{eq:QR}) in a way optimised for interval arithmetic, as many standard {\tt qr} routines give sub-par interval bounds that will lead to numerical blow-up of the algorithm.}.

The first coordinates of $p'$ and $q'$, on the other hand, have expanding dynamics. However, when the segment $\vec I_{p,q}$ is cut by the singular line at a point $r$, we can define $r' = Q'r$ without reference to these first coordinates. In particular, the singular line in the transformed coordinates is $Q^\top \S$ which solves some equation $x' = \beta y'$, with $\beta$ bounded because unstable manifolds are uniformly transversal to the singular line \cite{Young85}. Then, if the shared second coordinate of $p', q'$ is $y'$, then we can write 
\[ s' = (\beta y',y'). \]
Notably, this point $s'$ is generated using only quantities whose numerical error remains stable. It thus replaces either one of $p, q$ which contain dynamics where the error grows. By Proposition~\ref{p:CriticalOrbitSingularities}, almost every $p, q$ will eventually be replaced by such an $s$, resetting the size of its error and therefore ensuring it does not grow too big. 

%Our algorithm therefore is stable, in the sense that introduced errors are contracted under the dynamics, which is what we require for validated interval arithmetic to work. 
Implementation of the algorithm described above in validated interval arithmetic is straightforward, because it is stable, but here we must be careful: when our local unstable manifold is split and the choice of child manifold is to be made, the $t_*$ used to determine the choice is interval-valued (i.e. in this set $\mathcal{I}$ and likely of positive width). The natural way to choose the branch to continue with is made by sampling a uniform random variable $U$ and comparing it with $t_*$: the choice of segment is then clear except where there is an overlap between $U$ and $t_*$. A simple way to deal with this problem is to choose the floating-point precision small enough to make an overlap unlikely enough to invite references to the age of the universe (twice the bits of the standard double-precision is enough). More comprehensive handling of this eschatological edge-case may be done in various ways, including using importance sampling on multiple time series\footnote{One compares $U$ with some real number $t_{**} \in t_*$ and reweights the time series by $t_*/t_{**}$ or $(1-t_*)/(1-t_{**})$ as appropriate: note that the weights are also interval-valued.} %there are several options, such as choosing, in interval arithmetic, $f(I_{p,q},\cdot) = (I_{f(r),\hull\{f(p),f(q)\}},\cdot)$.

We will therefore be able to find an interval hypercube containing an exact time series $\{\vec f^n(\vec I,t_0)\}_{n = 0,\ldots,N}$ from (\ref{eq:VecFDynamicsImplicit}), where $t_0$ is implicitly defined by the random selection when the segment is cut.

%However, for the initial $t_0$ to be distributed according to a uniform distribution, it is necessary for the random sampling in (\ref{eq:VecFDynamicsImplicit}) to be accurate: however, when we implement this routine in validated interval arithmetic, we will obtain an interval-valued  (call it $\mathbf{t}_*$). Of course, one could ask how to sample an interval-valued probability. 

\subsection{Quantification of statistical error}\label{ss:CLT}

As the $\vec f$ dynamics we sample is just a two-to-one lift from the $f$ dynamics, which have a spectral gap \cite{BaladiGouezel09}, we expect that for large $N$ the error between Birkhoff means and true expectations \eqref{eq:RhoSBirkhoff} obey a central limit theorem \cite{Chernov95}. If we have several long time series
\[ \Psi^{(r)} = \frac{1}{N} \sum_{n=0}^{N-1} \Psi(\vec f^n(\vec I^{(r)}))) \]
for $R$ independent samples from $I^{(r)} \sim \vec \rho$, then
for sufficiently large $N$, the sample mean $\bar\Psi$ of the $\Psi^{(r)}$ will have expectation $\vec\rho(\Psi)$ (from the initialisation of the time series at equilibrium), and will differ from this by a factor of $\O(1/\sqrt{RN})$. Helpfully, we can quantify this deviation {\it a posteriori}: using the Gaussian behaviour of the $\Psi^{(r)}$ we have that if $s_\Psi^2$ is the sample variance, then for large $N$ we have
\[ \frac{\bar\Psi - \vec\rho(\Psi)}{\sqrt{R} s_\Psi} \sim t_{R-1}, \]
where $t_{R-1}$ is a Student $t$ distribution with $R-1$ degrees of freedom. This allows us to put confidence intervals on our estimates, as in Figure~\ref{fig:Conjecture2}. %and Figures~\ref{fig:Response}--\ref{fig:ResponseDeviation}. 
Such a principle has been used to test for linear response in previous work \cite{Gottwald16}.

To obtain an accurate sample from $\vec \rho$ in initialising our time series we begin by initialising $(\vec I_{p_0,f(p_0)},t^{(r)}) \in \vec\Lambda$, where $p_0 = (\tfrac{2}{2 + a - a^2 + 4b},0)$ and $t^{(r)} \sim \textrm{Uniform}(0,1)$ (in fact, implicitly using the above random choice methods). This initial measure lies in the Banach space that converges exponentially quickly to the physical measure \cite{DemersLiverani08} and so by making the spin-up time $n_{init}$ sufficiently long, we can ensure that our sampling initialisations $\vec I^{(r)} = \vec f^{n_{init}}(\vec I_{p_0,p_1},t^{(r)})$ come from a distribution exponentially close to $\vec \rho$.

	\section{Proof of baker's map result}\label{s:Baker}

In this appendix we will prove Theorem~\ref{t:Baker} on exponential conditional mixing for baker's maps of the form \eqref{eq:BakerMap}. %This section is independent of the rest of the paper, keeping separate notation.

For concision when quantitatively referring to Fourier dimension, let us say that a measure-function pair $(\nu,\psi)$ has $(\eta,C)$ Fourier decay if for all $j \in \Z \backslash \{0\}$,
\begin{equation} \left| \int_{0}^1 e^{2\pi i j \psi(x)} \,\dd\nu(x) \right| \leq C |j|^{-\eta}, \label{eq:FourierDecay} \end{equation}
and $\int_0^1 |\dd\nu| \leq C$. This implies that the Fourier dimension of $\psi_*\nu$ is at least equal to $2\eta$. 

Fourier decay is invariant under translations of $\psi$:
\begin{proposition}\label{p:FourierDecayTranslation}
	Suppose that $(\nu,\psi)$ has $(C,\eta)$ Fourier decay. Then for all $t \in \R$, so does $(\nu,\psi + t)$.
\end{proposition}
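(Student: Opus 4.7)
The plan is essentially a one-line observation: translation of $\psi$ by a constant $t$ only multiplies the integrand in \eqref{eq:FourierDecay} by a unit-modulus complex exponential, so it cannot affect the absolute value of the oscillatory integral.

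More concretely, I would write, for any $j \in \Z \setminus \{0\}$,
\[ \int_0^1 e^{2\pi i j (\psi(x) + t)} \, \dd\nu(x) = e^{2\pi i j t} \int_0^1 e^{2\pi i j \psi(x)} \, \dd\nu(x), \]
since $e^{2\pi i j t}$ does not depend on $x$ and can be factored out. Taking absolute values and using $|e^{2\pi i j t}| = 1$ together with the assumed $(C,\eta)$ Fourier decay of $(\nu,\psi)$ immediately yields
\[ \left| \int_0^1 e^{2\pi i j (\psi(x) + t)} \, \dd\nu(x) \right| = \left| \int_0^1 e^{2\pi i j \psi(x)} \, \dd\nu(x) \right| \leq C |j|^{-\eta}. \]
The total mass bound $\int_0^1 |\dd\nu| \leq C$ is a hypothesis on $\nu$ alone and is unchanged by replacing $\psi$ with $\psi + t$, so both conditions defining $(C,\eta)$ Fourier decay hold for $(\nu,\psi+t)$.

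There is no real obstacle here; the statement is recorded precisely so that it can be invoked later (presumably when parametrizing the foliation $\Psi(t;y) = (\psi(y)-t,y)$, where varying $t$ amounts to a translation of the parametrizing function and so leaves the Fourier decay constants intact).
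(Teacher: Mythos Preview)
Your proof is correct and essentially identical to the paper's own proof: both factor out the unimodular phase $e^{2\pi i j t}$ from the oscillatory integral and note that the total variation bound on $\nu$ is unaffected.
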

\begin{proof}
	We have 
	\[ \left| \int_{0}^1 e^{2\pi i j (\psi(x) + t)} \,\dd\nu(x) \right| = \left|\int_{0}^1 e^{2\pi i j \psi(x)} \,\dd\nu(x) \right| \leq C |j|^{-\eta}, \]
	and the integral of $|\nu|$ remains no greater than $C$, as required.
\end{proof}
	
To prove Theorem~\ref{t:Baker} we will employ a separate Fourier dimension theorem \cite{Mosquera18,Sahlsten20} for each of the conditions in the theorem's statement. The common component is the following lemma (into which any new Fourier dimension results may also be substituted):

\begin{lemma}\label{l:BakerLemma}
	Suppose one has a modified baker's map $b$ with contracting maps $v_i$. Let $\nu_0$ be the probability measure such that $v_i^* \nu_0 = k^{-1} \nu_0$ for all $i = 1,\ldots, k$, and let $\psi \in C^2$ be such that $\psi' \neq 0$ and $(\nu_0,\psi)$ has $(C,\eta)$ Fourier decay. 
	
	Let $\gamma \in (1-\eta,1)$, $\beta \in (0,1]$ and $\alpha \in (2-\eta-\gamma,1)$. 
	
	Then there exists $\xi \in (0,1)$ and $C'$ depending only on $C,\eta,\alpha,\beta\gamma,\psi'$ such that for all $A \in C^{\alpha;\beta}$ and $B \in C^{\gamma}$
	\[ | \rho_0(A\circ b^n\, B) - \rho_0(B) \rho(A) | \leq C' \| A \|_{C^{\alpha;\beta}} \|B\|_{C^\gamma} \xi^n \]
	where $\rho_0$ is defined as in Theorem~\ref{t:Baker}.
%	 $\psi_* \nu_0$ has positive Fourier dimension, {\it i.e.} for all $j \in \Z \backslash \{0\}$,
%	\[ \left| \int_{0}^1 e^{2\pi i j \psi(x)} \,\dd\nu_0(x) \right| \leq C_{\nu_0,\psi} \min\{1,|j|\}^{-\eta} \]
%	for some $C < \infty$ and $\eta > 0$.
\end{lemma}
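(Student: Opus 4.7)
The plan is to exploit the skew-product structure $b^n(x,y)=(T^n x,\,V_{\mathbf{i}(x)}(y))$, where $T(x)=kx\bmod 1$ is the $k$-adic tupling map and $V_{\mathbf{i}}=v_{i_{n-1}}\circ\cdots\circ v_{i_0}$ is the IFS composition indexed by the first $n$ base-$k$ digits $\mathbf{i}(x)=(i_0,\ldots,i_{n-1})$ of $x$. Substituting the parametrisation $\rho_0(\phi)=\int\phi(\psi(y),y)\,\dd\nu_0(y)$ and writing $\tilde B(y):=B(\psi(y),y)$ (which is $C^\gamma$ with norm controlled by $\|B\|_{C^\gamma}$ and $\|\psi\|_{C^1}$), the correlation becomes
\[
\rho_0(A\circ b^n\,B) = \int A\bigl(T^n\psi(y),\,V_{\mathbf{i}(\psi(y))}(y)\bigr)\tilde B(y)\,\dd\nu_0(y).
\]
Fourier-decomposing $A(x,y)=\sum_{j\in\Z}\hat A_j(y)e^{2\pi ijx}$ in $x$ gives, by the standard shift-and-difference trick, $\|\hat A_j\|_{C^\beta}\lesssim\|A\|_{C^{\alpha;\beta}}(1+|j|)^{-\alpha}$ and $\int\hat A_0\,\dd\nu_0=\rho(A)$. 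The correlation thereby splits as
\[ S_0+\sum_{j\neq 0}S_j,\qquad S_j:=\int \hat A_j\bigl(V_{\mathbf{i}(\psi(y))}(y)\bigr)\tilde B(y)e^{2\pi ijk^n\psi(y)}\,\dd\nu_0(y), \]
and the aim reduces to showing that $|S_0-\rho(A)\rho_0(B)|$ and $\sum_{j\neq 0}|S_j|$ are both exponentially small in $n$.

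The key observation driving both estimates is that $|V_{\mathbf{i}}'|\leq\mu^n$, which lets me approximate $V_{\mathbf{i}(\psi(y))}(y)$ by a function $G_n(\psi(y))$ depending on $\psi(y)$ only through its first $n$ base-$k$ digits, with uniform error $\mu^n$; combined with the $C^\beta$-regularity of $\hat A_j$, this introduces an $L^\infty$ approximation error of size $\mu^{n\beta}(1+|j|)^{-\alpha}\|A\|_{C^{\alpha;\beta}}$ in each integrand. Each integrand then has the form $F_j(\psi(y))\tilde B(y)$ with $F_j:=\hat A_j\circ G_n$ piecewise constant on $k$-adic intervals of width $k^{-n}$. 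Fourier-expanding $F_j$ as a trigonometric polynomial and pairing each mode $e^{2\pi im\psi(y)}$ with $\tilde B$ reduces each estimate to a sum of Fourier-decay integrals $\int \tilde B(y)e^{2\pi i(jk^n+m)\psi(y)}\,\dd\nu_0(y)$, which (via a further Fourier expansion of $\tilde B$ in $y$ to handle the Hölder-test-function case) are bounded by $|jk^n+m|^{-\eta}$ times combinatorial factors. For the mean term $S_0$ ($j=0$), the $m=0$ contribution matches $\rho(A)\rho_0(B)$ up to $O(\mu^{n\beta})$ via the self-similarity identity $\nu_0=k^{-n}\sum_{\mathbf{i}}(V_{\mathbf{i}})_*\nu_0$, and the tail sum over $m\neq 0$ is exponentially small precisely under the hypothesis $\gamma>1-\eta$. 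For the oscillatory $S_j$ with $j\neq 0$, the amplified frequency $jk^n$ yields an extra $k^{-n\eta}$ gain from the Fourier-decay hypothesis; combined with the prefactor $(1+|j|)^{-\alpha}$ from the Fourier decomposition of $A$, the sum $\sum_{j\neq 0}|S_j|$ converges precisely when $\alpha+\gamma>2-\eta$.

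The main obstacle I anticipate is extending the Fourier decay hypothesis---stated only for the constant test function---to the non-trivial test functions $\tilde B$ and $\hat A_j(V_{\mathbf{i}(\psi(y))}(y))$ arising in the integrals, of which the latter is only piecewise-Hölder across cylinder boundaries. The natural remedy is a two-scale cylinder decomposition at an intermediate level $p<n$, so that $V_{\mathbf{i}(\psi(y))}(y)$ can be approximated by a piecewise constant at level $p$ (contraction error $\mu^{p\beta}$), $\tilde B$ by its cylinder averages (Hölder error $k^{-p\gamma}$), and the residual frequency amplification still gains a factor $k^{-(n-p)\eta}$ from the Fourier decay hypothesis. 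The strict inequalities $\gamma>1-\eta$ and $\alpha+\gamma>2-\eta$ are precisely those which admit a choice $p=\Theta(n)$ making all three error sources simultaneously exponentially small, yielding the uniform rate $\xi\in(0,1)$ and the claimed constant $C'$ that depends only on $C$, $\eta$, $\alpha$, $\beta$, $\gamma$, $\mu$, $k$, and $\psi'$.
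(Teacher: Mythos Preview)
Your overall architecture is the same as the paper's: exploit the contraction in $y$ to freeze the stable coordinate, then use the Fourier-decay hypothesis on $(\nu_0,\psi)$ together with the $k^n$-frequency amplification from the tupling map. Two points, however, need correction, and the paper's organisation handles both more cleanly.

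\textbf{Handling of $\tilde B$.} You propose to bound $\int \tilde B(y)\,e^{2\pi i J\psi(y)}\,\dd\nu_0(y)$ by ``a further Fourier expansion of $\tilde B$ in $y$''. Taken literally this fails: expanding $\tilde B(y)=\sum_l b_l e^{2\pi ily}$ produces integrals $\int e^{2\pi i(J\psi(y)+ly)}\,\dd\nu_0(y)$, and the hypothesis gives \emph{no} control on these mixed-phase integrals --- Fourier decay of $\psi_*\nu_0$ says nothing about $(\psi+t\,\id)_*\nu_0$. The correct move, and this is exactly the paper's Lemma~\ref{l:Prop8}, is to expand $\tilde B\circ\psi^{-1}$ in the variable $x=\psi(y)$: since $\psi$ is a $C^1$ diffeomorphism, $\tilde B\circ\psi^{-1}\in C^\gamma$, its Fourier coefficients decay like $|l|^{-\gamma}$, and convolving against the $|l|^{-\eta}$ decay of $(\psi_*\nu_0)^\wedge$ gives the bound $|J|^{-(\eta+\gamma-1)}$. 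This is where the threshold $\gamma>1-\eta$ and the effective exponent $\eta'=\eta+\gamma-1$ (hence the condition $\alpha>2-\eta-\gamma=1-\eta'$) genuinely arise. If by ``in $y$'' you already meant ``in $\psi(y)$'' then you are fine, but the distinction is the whole content of this step.

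\textbf{The two-scale step.} Your level-$p$ cylinder decomposition is ambiguous about \emph{which} $p$ digits of $\psi(y)$ you freeze. The second coordinate $V_{\mathbf{i}(\psi(y))}(y)=v_{i_{n-1}}\circ\cdots\circ v_{i_0}(y)$ lies in an interval of width $\mu^p$ only if you fix the \emph{outermost} $p$ compositions, i.e.\ the digits $i_{n-p},\dots,i_{n-1}$; fixing the first $p$ digits $i_0,\dots,i_{p-1}$ gives no such control. But the outermost digits are functions of $\kappa^{n-p}(\psi(y))$, not of the level-$p$ $k$-adic cell containing $\psi(y)$, so this does not align with your approximation of $\tilde B$ by constants on level-$p$ cells (error $k^{-p\gamma}$). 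The paper resolves this by a \emph{time} split rather than a spatial one: write $n=m+l$, approximate $A\circ b^n\approx A_{m,y_0}\circ\kappa^l$ with error $\mu^{m\beta}$ (Proposition~\ref{p:Prop6}), and then apply the one-dimensional Fourier estimate (Lemmas~\ref{l:Prop4}--\ref{l:Prop5}) to the piecewise-$\alpha$-H\"older function $A_{m,y_0}$ (with $k^m$ jumps) against the measure $\psi_*(\tilde B\,\nu_0)$, whose Fourier-decay exponent is $\eta'=\eta+\gamma-1$ from the previous step. This yields the bound $k^{m-l\eta'}$, and optimising $m/l$ gives the exponential rate $\xi$.

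In short: the paper first absorbs $B$ into the measure (reducing $\eta$ to $\eta'$) and \emph{then} runs the time-split contraction/Fourier argument once; you Fourier-decompose $A$ first and try to handle $B$ and the stable contraction simultaneously via a spatial cylinder scheme, which forces you into the mixed-phase integrals above. Your route can be repaired, but only by importing both of these fixes, at which point it coincides with the paper's proof.
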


\begin{proof}[Proof of Theorem~\ref{t:Baker}]
	The measures $\nu_0$ from Lemma \ref{l:BakerLemma} are the measures of maximal entropy of these expanding iterated function schemes: in particular, they are Gibbs (with constant weights) and atomless. If we have that $(\nu_0,\psi)$ has Fourier decay then so do $(\nu_0,\psi)$ uniformly from Proposition~\ref{p:FourierDecayTranslation}, and Lemma~\ref{l:BakerLemma} then secures us the theorem. Since $\nu_0$ is a probability measure, it is only necessary to check that \eqref{eq:FourierDecay} holds, which we do procedurally from existing results.
	\\
	
	That \eqref{eq:FourierDecay} holds for option \ref{opt:3} is a simple application of \cite[Theorem~3.1]{Mosquera18} (and in fact here, $\eta$ is independent of $\psi$).
	\\
	
	To see it for options \ref{opt:1} and \ref{opt:2} requires a little more cunning. We have that $\psi$ is a diffeomorphism onto its image: let $\omega(x) = \omega_0 + \omega_1 x$ map $\psi([0,1])$ linearly onto $[0,1]$, so that $\tilde \psi = \omega \circ \psi: [0,1]\circlearrowleft$ is a diffeomorphism. If $\{v_{\mathbf{i}}\}_{\mathbf{i} \in \{1,\ldots, k\}^n}$ are $n$-fold compositions of the contractions $v_{i}$ then for some large enough $n$, the $n$-fold compositions $\{\tilde\psi \circ v_{\mathbf{i}} \circ \tilde\psi^{-1}\}_{\mathbf{i} \in \{1,\ldots, k\}^n}$ are uniformly contracting. They are also totally nonlinear and $C^2$ with bounded distortion. If \ref{opt:1} holds then their ranges fill $[0,1]$ and if \ref{opt:2} holds then they are analytic. Furthermore, under either option they remain totally nonlinear. By \cite[Theorem~1.1]{Sahlsten20} their measure of maximal entropy $\tilde \nu_0$ (which is Gibbs and atomless) therefore has polynomial decay of its Fourier transform, that is, for all $l \in \R\backslash\{0\}$,
	\[ \left|\int_0^1 e^{-2\pi i l y} \dd\tilde \nu_0(y)\right| \leq C |l|^{-\eta} \]
	for some $\eta>0$ and $C<\infty$.
	
	Now, this measure $\tilde \nu_0$ is also the measure of maximal entropy of the conjugated single iterates $\{ \tilde\psi \circ v_i \circ \tilde\psi^{-1} \}_{i = 1,\ldots,k}$; from the conjugacy we therefore know that $\tilde \nu_0 = \tilde\psi^* \nu_0$. Hence,
	
	\begin{align*}
		\int_0^1 e^{-2\pi i l y} \dd\tilde \nu_0(y) &= \int_0^1 e^{-2\pi i l \omega(\psi(y))} \dd \nu_0(y)\\
		&= e^{-2\pi i l \omega_0}  \int_0^1 e^{-2\pi i l \omega_1 \psi(y)} \dd \nu_0(y),
	\end{align*}
	so setting $l = j/\omega_1$ we obtain that
	\[ \left| \int_0^1 e^{-2\pi i j \omega_1 \psi(y)} \dd \nu_0(y) \right| \leq C |\omega_1|^{\eta} |j|^{-\eta}, \]
	as required.
	%
		%We can also make it work for functions that have $\psi' = 0$ or $\psi'' = 0$ away from the support of the measure by dividing them up into bits.
\end{proof}

The relevance of $\nu_0$ is because it is the cross-section of the SRB measure along lines of constant $x$ (i.e. local stable manifolds):

\begin{proposition}\label{p:SRBMeasure}
	Let $\nu_0$ be as in Lemma \ref{l:BakerLemma}. Then $\rho = \Leb \times \nu_0$ is the SRB measure of $b$.
\end{proposition}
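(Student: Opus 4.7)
The plan is to verify (i) that $\rho = \Leb \times \nu_0$ is $b$-invariant, and (ii) that it has Lebesgue conditional measures along local unstable manifolds. Combined with the uniqueness of the SRB measure asserted in Proposition~\ref{p:ConditionalMeasures}, this identifies $\rho$ as that measure.

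First I would rephrase the hypothesis on $\nu_0$. Since the $v_i$ are contractions onto pairwise disjoint open images $v_i((0,1))$, the relation $v_i^* \nu_0 = k^{-1} \nu_0$ (pullback of the restriction of $\nu_0$ to $v_i([0,1])$) is equivalent to the standard Hutchinson self-similarity identity
\[ \nu_0 \;=\; \frac{1}{k}\sum_{i=1}^{k} (v_i)_*\nu_0 \]
with uniform weights $1/k$, and uniquely characterises $\nu_0$ among probability measures on $[0,1]$. This is the form I want to feed into the invariance computation.

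Next, for $b$-invariance, take $A \in C^0(D)$ and split the $x$-integration in $\int A\circ b\,\dd\rho$ over the branch intervals $I_i = ((i-1)/k, i/k]$. On $I_i$ we have $b(x,y) = (kx-(i-1),\, v_i(y))$, so the change of variables $u = kx-(i-1)$ gives
\[ \int_D A\circ b\,\dd\rho \;=\; \sum_{i=1}^k \frac{1}{k}\int_0^1\!\!\int_0^1 A(u, v_i(y))\,\dd\nu_0(y)\,\dd u. \]
Pushing $\nu_0$ forward by $v_i$ in the inner integral and using the disjointness of the images $v_i([0,1])$ to reassemble the sum, the self-similarity identity yields $\int_D A\,\dd u\,\dd\nu_0 = \int_D A\,\dd\rho$, so $b_*\rho = \rho$.

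Finally, by construction the conditional measures of $\rho = \Leb \times \nu_0$ on horizontal lines $\{y\}\times[0,1]$ are Lebesgue. Since $b$ expands uniformly by factor $k$ in $x$ and contracts by at most $\mu<1$ in $y$, these horizontal segments are (pieces of) local unstable manifolds; hence $\rho$ is a $b$-invariant hyperbolic measure with absolutely continuous conditionals along the unstable direction, i.e.\ an SRB measure. The uniqueness clause of Proposition~\ref{p:ConditionalMeasures} then forces $\rho$ to be the SRB measure. The only real subtlety, and the place where care is needed, is the translation between the pullback relation stated in the lemma and the pushforward form of Hutchinson self-similarity; the remaining steps are essentially bookkeeping from the product structure of $\rho$.
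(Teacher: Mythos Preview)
Your proposal is correct and follows essentially the same route as the paper's proof: both verify $b$-invariance by splitting the $x$-integral over the branch intervals, changing variables, and invoking the self-similarity relation for $\nu_0$, then observe that $\rho = \Leb\times\nu_0$ has Lebesgue conditionals on the horizontal (unstable) fibres. Your explicit rephrasing of the defining relation for $\nu_0$ as the Hutchinson identity $\nu_0 = k^{-1}\sum_i (v_i)_*\nu_0$ is a sensible clarification, and your appeal to uniqueness via Proposition~\ref{p:ConditionalMeasures} makes explicit what the paper leaves implicit; one cosmetic slip is that the unstable fibres should be written $[0,1]\times\{y\}$ rather than $\{y\}\times[0,1]$.
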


Henceforth we will find it useful to notate the unstable dynamics $\kappa(x) = kx \mod 1$.

\begin{proof}[Proof of Proposition~\ref{p:SRBMeasure}]
	$\rho$ is conditionally absolutely continuous along unstable manifolds (which are lines of fixed $y$), and solves
	\begin{align*} 
	\int_D A\circ b\, \dd\rho &= \int_0^1 \int_0^1 A(\kappa(x), v_{\lceil kx \rceil}(y))\, \dd \nu_0(y)\,\dd x\\
	&= \sum_{i=1}^k \int_{(i-1)/k}^{i/k} \int_0^1 A(\kappa(x), v_{i}(y))\, \dd \nu_0(y)\,\dd x\\ 
	&= \sum_{i=1}^k \int_{0}^1 k^{-1} \int_0^1 A(x, v_{i}(y))\, \dd \nu_0(y)\,\dd x \\
	&= \sum_{i=1}^k \int_{0}^1 k^{-1} \int_0^1 A(x, y)\, \dd (v_i^*\nu_0)(y)\,\dd x \\
	&= \int_0^1 \int_0^1 A(x,y) k^{-1} \sum_{i=1}^k \dd(v_i^*\nu_0)(y)\,\dd x\\
	&= \int_0^1 \int_0^1 A(x, y)\, \dd \nu_0(y)\,\dd x = \int_D A\,\dd\rho.
	\end{align*}
	Hence, it is an SRB measure. %All SRB measures must have this product structure because $b$ is expanding at a uniform rate along unstable manifolds, and from the equations above we must see that the stable measure must be $\nu_0$, hence the measure is unique.
\end{proof}

This allows us to prove the existence of our conditional measures $\rho_t$:
\begin{proof}[Proof of Proposition~\ref{p:ConditionalMeasures}]
	To prove \eqref{eq:BakerSlice}, noticing that $\rho$ is just a product measure of the uniform measure in $x$ (therefore $t$) and $\nu_0$ in $y$, we have that
	\[ \frac{1}{2\delta}\int_{\left\{\Psi(s,y)\mid |s-t|<\delta,y\in[0,1]\right\}} A(x,y)\,\dd\rho =  \int_{[0,1]} \frac{1}{2\delta}\int_{[s-\delta,s+\delta]}A(\psi(y)-t,y)\,\dd t\,\dd\nu_0(y), \]
	where $\nu_0$ is defined in Lemma~\ref{l:BakerLemma}.
	This integral is absolutely bounded by $\sup |A|$, and so by the dominated convergence theorem
	\begin{align*} \lim_{\delta\to 0} \frac{1}{2\delta}\int_{\left\{\Psi(s,y)\mid |s-t|<\delta,y\in[0,1]\right\}} A(x,y)\,\dd\rho &= \int_{[0,1]} \lim_{\delta \to 0} \frac{1}{2\delta}\int_{[t-\delta,t+\delta]}A(\psi(y)-s,y)\,\dd s\,\dd\nu_0(y) \\
	&= \int_{[0,1]} A(\psi(y)-t,y)\,\dd \nu_0(y)\\
	& = \int_{D} A\,\dd(\Psi(t;\cdot)_* \nu_0). \end{align*}
	
	This means we must define
	\begin{equation}\label{eq:CondMeasDef}
	\rho_t := \Psi(t;\cdot)_* \nu_0,
	\end{equation}
	and so get \eqref{eq:BakerSlice}.
	 
	To prove the second part, we have that for any $E \subseteq D_\Psi$,
	\[ \rho_t(E) = \int_{[0,1]} \mathbb{1}_E(\Psi(t;y))\,\dd \nu_0(y), \]
	so
	\[ \int_{-t_*}^{t_*} \rho_t(E)\,\dd t = \int_{-t_*}^{t_*}\int_{[0,1]} \mathbb{1}_E(\Psi(t;y))\,\dd \nu_0(y)\,\dd t. \]
	By a change of coordinates $(x,y) = \Psi(t,y) = (\psi(y)-t,y)$, and using Proposition~\ref{p:SRBMeasure}, we have
	\[ \int_{-t_*}^{t_*} \rho_t(E)\,\dd t = \int_{D_\Psi} \mathbb{1}_E(x,y)\,\dd \rho(x,y) = \rho(E). \]

\end{proof}

The following technical lemmas will be of use in following proofs. We will prove them in Appendix~\ref{a:baker}.
\begin{lemma}\label{l:DCT}
	Suppose that for some $\alpha \in (0,1]$, $\phi: [0,1] \to \R$ is a piecewise $\alpha$-H\"older function with a finite number of jumps, and $\nu$ is an integrable atomless measure. Let $\hat \phi_j, \hat \nu_j$ be the respective Fourier coefficients of $\phi$ and $\nu$. Then
	\begin{equation} \int \phi \,\dd\nu = \sum_{j \in \mathbb{Z}} \hat \phi_{-j} \hat \nu_j, \label{eq:IntegralFourierDecomposition}\end{equation}
	provided the sum is absolutely convergent.
\end{lemma}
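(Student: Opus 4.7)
The plan is to approximate $\phi$ by its Fejér means and pass to the limit on both sides of the claimed identity. Define
\[ \sigma_N\phi(x) = \sum_{|j| < N} \left(1 - \tfrac{|j|}{N}\right) \hat\phi_j e^{2\pi i j x}, \]
which is a trigonometric polynomial, so by linearity and by Fubini applied to the (finite) sum,
\[ \int \sigma_N\phi \,\dd\nu = \sum_{|j|<N} \left(1 - \tfrac{|j|}{N}\right) \hat\phi_{-j}\hat\nu_j. \]
The strategy is then to show that as $N \to \infty$ the left-hand side converges to $\int\phi\,\dd\nu$ and the right-hand side to $\sum_{j \in \Z}\hat\phi_{-j}\hat\nu_j$.

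For the right-hand side, the summands are dominated in absolute value by $|\hat\phi_{-j}\hat\nu_j|$, which is summable by hypothesis; dominated convergence on $\Z$ (with counting measure) then yields $\sum_j (1-|j|/N)_+ \hat\phi_{-j}\hat\nu_j \to \sum_j \hat\phi_{-j}\hat\nu_j$.

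For the left-hand side, I would use two facts about Fejér means. First, since $\sigma_N \phi = F_N * \phi$ with $F_N$ the nonnegative Fejér kernel of mass $1$, one has $|\sigma_N\phi(x)| \leq \|\phi\|_{L^\infty}$, and $\|\phi\|_{L^\infty}$ is finite because $\phi$ is piecewise $\alpha$-Hölder with finitely many pieces. Second, by Fejér's theorem, $\sigma_N\phi(x) \to \phi(x)$ at every continuity point of $\phi$ (on the torus $\R/\Z$, which might introduce an additional jump at $0 \equiv 1$). The set of discontinuities is finite, hence $\nu$-null since $\nu$ is atomless, so $\sigma_N\phi \to \phi$ $\nu$-almost everywhere. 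Since $|\nu|([0,1]) < \infty$ by integrability, the constant $\|\phi\|_{L^\infty}$ is $\nu$-integrable, and the dominated convergence theorem gives $\int \sigma_N\phi \,\dd\nu \to \int\phi\,\dd\nu$. Combining both limits yields \eqref{eq:IntegralFourierDecomposition}.

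The main obstacle, if any, is the behaviour at the (finitely many) jump points of $\phi$, where $\sigma_N\phi$ converges not to $\phi$ but to the average of the left and right limits; this is precisely what is defused by atomlessness of $\nu$, ensuring the exceptional set is still $\nu$-null. The Hölder regularity of $\phi$ is not strictly used in the argument beyond guaranteeing boundedness on each piece; the absolute convergence assumption on $\sum|\hat\phi_{-j}\hat\nu_j|$ does all the heavy lifting on the Fourier side.
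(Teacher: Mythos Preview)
Your proof is correct and follows essentially the same route as the paper: approximate $\phi$ by its Fej\'er means, use atomlessness of $\nu$ together with dominated convergence to pass to the limit in $\int \sigma_N\phi\,\dd\nu$, and identify the result as the Ces\`aro (hence ordinary, by absolute convergence) sum of $\hat\phi_{-j}\hat\nu_j$. Your remark about the potential extra jump at $0\equiv 1$ on the torus is a nice clarification that the paper leaves implicit.
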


For $\alpha \in (0,1]$ let the H\"older semi-norm on a set $E \subseteq [0,1]$ be defined as follows:
\begin{equation} |\phi|_{C^\alpha(E)} := \sup_{[x,y] \subseteq E} \frac{|\phi(x)-\phi(y)|}{|x-y|^\alpha}. \label{eq:PiecewiseHolderSemiNormDef}\end{equation}

\begin{lemma}\label{l:HolderBVFourierDecay}
	Suppose $\phi:[0,1] \to \R$ is piecewise $\alpha$-H\"older with jumps on a set $S \subset (0,1]$ (including at $1$ if it is not periodic). Then for $j \in \Z \backslash\{0\}$,
	\[ \left| \int_0^1 \phi(x) e^{-2\pi i j x}\,\dd x \right| \leq |\phi|_{C^\alpha(S^c)} |j|^{-\alpha} + |S| \|\phi \|_{L^\infty} |j|^{-1}. \]
\end{lemma}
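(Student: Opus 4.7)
The proof will be a standard translation-and-averaging argument for Fourier coefficients of H\"older functions, modified to absorb the jumps into a "bad" set of small measure. First I will extend $\phi$ to a $1$-periodic function on $\R/\Z$; the hypothesis that $1 \in S$ when $\phi$ is not periodic ensures that the extended $\phi$ is piecewise $\alpha$-H\"older with jumps contained in $S$ (viewed as a subset of $\R/\Z$). Without loss of generality I assume $j > 0$ (the case $j<0$ is identical by complex conjugation) and set $h = 1/(2j)$.

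Let $I_j := \int_0^1 \phi(x) e^{-2\pi i j x}\,\dd x$. The key identity comes from substituting $x \mapsto x+h$: since $e^{-2\pi i j h} = e^{-\pi i} = -1$ and $\phi$ is $1$-periodic, the change of variables gives $I_j = -\int_0^1 \phi(x+h) e^{-2\pi i j x}\,\dd x$. Adding this to the original expression produces
\[ 2 I_j = \int_0^1 \bigl[\phi(x) - \phi(x+h)\bigr] e^{-2\pi i j x}\,\dd x, \]
so that $2|I_j| \leq \int_0^1 |\phi(x+h) - \phi(x)|\,\dd x$.

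Now I split $[0,1] = G \sqcup B$, where $B := \{x \in [0,1] : [x,x+h] \cap S \neq \emptyset\}$ (working modulo $1$). For each $s \in S$, the set of $x$ with $s \in [x,x+h]$ is an interval of length $h$, so $|B| \leq |S| h$, where $|S|$ denotes the cardinality of $S$. On the good set $G$, the interval $[x,x+h]$ is contained entirely in $S^c$ (a single connected component, since $S$ is finite), so the definition of the semi-norm in \eqref{eq:PiecewiseHolderSemiNormDef} gives $|\phi(x+h) - \phi(x)| \leq |\phi|_{C^\alpha(S^c)} h^\alpha$. On the bad set $B$, I use the trivial bound $|\phi(x+h)-\phi(x)| \leq 2\|\phi\|_{L^\infty}$.

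Combining,
\[ 2|I_j| \leq |\phi|_{C^\alpha(S^c)} h^\alpha + 2\|\phi\|_{L^\infty}\, |S|\, h = |\phi|_{C^\alpha(S^c)} (2|j|)^{-\alpha} + \|\phi\|_{L^\infty}\, |S|\, |j|^{-1}, \]
and dividing by $2$ yields the stated bound, since $2^{-\alpha-1} \leq 1$ for $\alpha \in (0,1]$. There is essentially no obstacle here; the only small care needed is the bookkeeping about viewing $\phi$ as a function on $\R/\Z$ (so that the shift by $h$ makes sense globally) and confirming that $[x,x+h] \subseteq S^c$ for $x \in G$ so that the piecewise H\"older semi-norm \eqref{eq:PiecewiseHolderSemiNormDef} applies directly.
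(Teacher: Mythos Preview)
Your proof is correct and gives slightly sharper constants than needed. It is, however, a genuinely different argument from the paper's. The paper partitions $[0,1]$ into $|j|$ equal subintervals of length $|j|^{-1}$; on each subinterval not meeting $S$ it subtracts the value of $\phi$ at the left endpoint (using that $e^{-2\pi ijx}$ has mean zero over a full period) and bounds the remainder by $|\phi|_{C^\alpha(S^c)}|j|^{-\alpha-1}$, while on the at most $|S|$ subintervals meeting $S$ it uses the trivial bound $\|\phi\|_{L^\infty}|j|^{-1}$. Summing the $|j|$ pieces gives exactly the stated inequality. Your translation-by-half-period trick $\phi(x)-\phi(x+h)$ with $h=1/(2|j|)$ is the classical alternative route to H\"older Fourier decay; it avoids the piecewise summation at the cost of needing the periodic extension and the wrap-around bookkeeping you mention. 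Both arguments localise the jumps to a set of measure $\mathcal O(|S|/|j|)$ and exploit H\"older continuity on the rest; the paper's version is marginally more self-contained on $[0,1]$ (no circle identification needed), while yours yields the extra factors $2^{-\alpha-1}$ and $1/2$ for free.
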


With these lemmas in hand, we will try and prove exponential conditional mixing in the projection of the baker's map onto the $x$ coordinate. This next lemma is the heart of the proof
\begin{lemma}\label{l:Prop4}
	Suppose $\phi: \R/\Z$ is as in Lemma \ref{l:HolderBVFourierDecay}, and $\nu$ is an integrable atomless measure with Fourier coefficients $\hat \nu_j$, $(\id,\nu)$ has $(C_\nu, \eta)$ Fourier decay, and $\alpha > 1 - \eta$. Then
	\[ \left| \int_0^1 \phi \circ \kappa^n \,\dd\nu - \int_0^1 \phi\,\dd x \int_0^1 \dd\nu \right| \leq \frac{4C_\nu}{\alpha+\eta-1} k^{-n\eta} (|\phi|_{C^\alpha(S^c)} + |S| \|\phi\|_{L^\infty}). \]
\end{lemma}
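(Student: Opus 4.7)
\textbf{Proof proposal for Lemma~\ref{l:Prop4}.} The plan is to decompose everything in Fourier series in the $x$-variable and exploit the very simple way the tupling map $\kappa(x) = kx \bmod 1$ acts on those coefficients: iterating $n$ times just multiplies the frequency index by $k^n$. Concretely, I would first show that if $\phi$ has Fourier coefficients $\{\hat\phi_j\}_{j\in\Z}$, then
\[ \widehat{\phi\circ\kappa^n}_m = \begin{cases} \hat\phi_{m/k^n}, & k^n \mid m,\\ 0, & \text{otherwise}, \end{cases} \]
which is a direct computation by splitting $[0,1]$ into the $k^n$ branches of $\kappa^n$ and using $\sum_{l=0}^{k^n-1} e^{-2\pi i m l/k^n} = k^n \cha(k^n\mid m)$. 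Note that $\phi\circ\kappa^n$ inherits from $\phi$ piecewise $\alpha$-Hölder regularity (with jumps on $\kappa^{-n}(S)\cup\{\ell/k^n\}$), so Lemma~\ref{l:DCT} is applicable.

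Next, I would apply Lemma~\ref{l:DCT} to rewrite
\[ \int_0^1 \phi\circ\kappa^n\,\dd\nu = \sum_{m\in\Z} \widehat{\phi\circ\kappa^n}_{-m}\,\hat\nu_m = \sum_{j\in\Z} \hat\phi_{-j}\,\hat\nu_{j k^n}, \]
subject to absolute convergence, which I would verify in the next step alongside the main estimate. Isolating the $j=0$ term gives $\hat\phi_0\,\hat\nu_0 = \int_0^1 \phi\,\dd x \cdot \int_0^1 \dd\nu$, so the quantity to bound is $\sum_{j\neq 0} \hat\phi_{-j}\,\hat\nu_{j k^n}$.

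For the estimate, I combine Lemma~\ref{l:HolderBVFourierDecay} for $\hat\phi_{-j}$, giving $|\hat\phi_{-j}|\leq |\phi|_{C^\alpha(S^c)} |j|^{-\alpha} + |S|\|\phi\|_{L^\infty} |j|^{-1}$, with the Fourier decay hypothesis $|\hat\nu_{jk^n}| \leq C_\nu (|j|k^n)^{-\eta}$. Pulling out the factor $C_\nu k^{-n\eta}$, the remaining sum is
\[ \sum_{j\neq 0}\Bigl(|\phi|_{C^\alpha(S^c)} |j|^{-\alpha-\eta} + |S|\|\phi\|_{L^\infty}|j|^{-1-\eta}\Bigr), \]
which converges by the hypothesis $\alpha+\eta > 1$ (and $1+\eta>1$). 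The tail-integral estimate $\sum_{j\neq 0}|j|^{-s} \leq 2 + 2/(s-1) \leq 4/(s-1)$ (the latter being valid in the relevant range $s\in(1,2]$, which is all we need since $\alpha,\eta\leq 1$) controls both sums; using $\alpha+\eta-1 \leq \eta$ to dominate $1/\eta$ by $1/(\alpha+\eta-1)$ unifies the two prefactors into a single constant $4/(\alpha+\eta-1)$, giving the claimed bound.

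The main ``obstacle'' is a bookkeeping one: checking that the absolute convergence hypothesis of Lemma~\ref{l:DCT} is met (so that the Fourier rearrangement is valid) and then being a little careful with the constants so that the final prefactor is exactly $4C_\nu/(\alpha+\eta-1)$ as stated, rather than some messier combination. Both the application of Lemma~\ref{l:DCT} and the absolute bound on the tail use the same estimates, so in practice they are verified simultaneously.
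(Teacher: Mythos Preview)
Your proposal is correct and follows essentially the same approach as the paper: compute the Fourier coefficients of $\phi\circ\kappa^n$ via the tupling structure, apply Lemma~\ref{l:DCT} to write the integral as $\sum_j \hat\phi_{-j}\hat\nu_{jk^n}$, isolate the $j=0$ term, and bound the tail using Lemma~\ref{l:HolderBVFourierDecay} together with the Fourier decay of $\nu$. Your constant-tracking (via $\sum_{j\neq 0}|j|^{-s}\leq 4/(s-1)$ for $s\in(1,2]$ and $\alpha+\eta-1\leq\eta$) is slightly different in detail from the paper's but lands on the same prefactor.
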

\begin{proof}
	Recall that $\kappa^n(x) = k^n x\mod 1$. The Fourier coefficients of $\phi \circ \kappa^n$ are zero except for those whose indices are multiples of $k^n$:
	\[ \int_0^1 \phi(\kappa_n(x))e^{-2\pi ij k^n x}\,\dd x = \hat \phi_{k^n j}. \]
	These decay as $\O(|j|^{-\alpha})$, whereas the Fourier coefficients of $\nu$ are $\O(|j|^{-\eta})$, so we know their convolution is summable. By Lemma \ref{l:DCT} we therefore have
	\[ \int_0^1 \phi \circ \kappa^n \,\dd\nu = \sum_{j\in \Z} \hat \phi_{-j} \hat v_{k^n j}. \]
	This means
	\begin{align*}
	\left| \int_0^1 \phi \circ \kappa^n \,\dd\nu - \hat\phi_0 \hat\nu_0 \right| &\leq \sum_{j=1}^\infty |\hat \phi_{-j}| |\hat v_{k^n j}| + |\hat \phi_{j}| |\hat v_{-k^n j}| \\
	&\leq \sum_{j=1}^\infty 2\left( |\phi|_{C^\alpha(S^c)} |j|^{-\alpha} + |S| \|\phi \|_{L^\infty} |j|^{-1} \right) C_\nu |k^n j|^{-\eta} \\
	&\leq 2C_\nu k^{-n\eta} \left( \frac{\alpha+\eta}{\alpha + \eta - 1} |\phi|_{C^\alpha(S^c)} + \frac{1+\eta}{\eta} |S| \|\phi \|_{L^\infty} \right).
	\end{align*} 
	Elementary inequalities on the fractions, and the zeroth Fourier coefficient's definition as the total integral give the required result.
\end{proof}

We now attempt to connect this one-dimensional picture in $\kappa$ to the two-dimensional picture of the baker's map. In this proposition we define a one-dimensional observable $A_{m,y_0}(x)$ that in the following proposition we find will closely approximate $A(b^m(x,y))$ for any $y$, when $m$ is large enough.
\begin{lemma}\label{l:Prop5}
	Suppose that $\nu, \alpha$ are as in Lemma~\ref{l:Prop4}. Suppose that $A: D \to \R$ has $|A|_{\alpha,x} < \infty$ and let 
	\[ A_{m,y_0}(x) := A(b^m(x,y_0)).\]
	Then
	\[ \left| \int_0^1 A_{m,y_0} \circ \kappa^n \,\dd\nu - \int_0^1 A_{m,y_0}\,\dd x \int_0^1 \dd\nu \right| \leq \frac{4C_\nu}{\alpha+\eta-1} k^{m-n\eta} (|A|_{\alpha,x} + \|A\|_{L^\infty}). \]
\end{lemma}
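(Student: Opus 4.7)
The plan is to reduce this lemma directly to Lemma~\ref{l:Prop4} applied to the one-dimensional function $\phi = A_{m,y_0}$, after verifying that $A_{m,y_0}$ is piecewise $\alpha$-Hölder with controllable semi-norm and jump count in terms of $m$.

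First I would observe the structural fact that, by the skew-product form of $b$, we have $b^m(x,y_0) = (\kappa^m(x), w_m(x;y_0))$, where on each of the $k^m$ subintervals $J_{\mathbf{i}} = ((i-1)/k^m,i/k^m)$ (indexed by an itinerary $\mathbf{i} \in \{1,\ldots,k\}^m$), the map $x\mapsto \kappa^m(x)$ is the affine bijection $x\mapsto k^m x \mod 1$ onto $(0,1)$, and $w_m(x;y_0)$ is independent of $x$: it equals the fixed composition $v_{i_1}\circ\cdots\circ v_{i_m}(y_0)$ determined by the itinerary $\mathbf{i}$. Thus on each $J_\mathbf{i}$, the function $A_{m,y_0}$ has the form $x\mapsto A(k^m x - (i-1),\, y_\mathbf{i})$ with $y_\mathbf{i}$ constant in $x$.

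Next I would bound the piecewise-Hölder data needed to invoke Lemma~\ref{l:Prop4}. The jump set $S$ consists of the at most $k^m$ dyadic-type points $i/k^m$, so $|S| \leq k^m$. On each piece $J_\mathbf{i}$, since $|\kappa^m(x_1)-\kappa^m(x_2)| = k^m|x_1-x_2|$, we get
\[ |A_{m,y_0}(x_1) - A_{m,y_0}(x_2)| \leq |A|_{\alpha,x}\,(k^m|x_1-x_2|)^\alpha, \]
so $|A_{m,y_0}|_{C^\alpha(S^c)} \leq k^{m\alpha}|A|_{\alpha,x}$. Also trivially $\|A_{m,y_0}\|_{L^\infty} \leq \|A\|_{L^\infty}$.

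Now I apply Lemma~\ref{l:Prop4} to $\phi = A_{m,y_0}$ and the measure $\nu$ (which by assumption is integrable, atomless, and has $(C_\nu,\eta)$ Fourier decay with $\alpha > 1-\eta$), yielding
\[ \left| \int_0^1 A_{m,y_0}\circ\kappa^n\,\dd\nu - \int_0^1 A_{m,y_0}\,\dd x \int_0^1 \dd\nu \right| \leq \frac{4C_\nu}{\alpha+\eta-1} k^{-n\eta}\bigl(k^{m\alpha}|A|_{\alpha,x} + k^m\|A\|_{L^\infty}\bigr). \]
Using $\alpha \leq 1$ so that $k^{m\alpha} \leq k^m$ and factoring out $k^m$ gives exactly the bound $\tfrac{4C_\nu}{\alpha+\eta-1}k^{m-n\eta}(|A|_{\alpha,x}+\|A\|_{L^\infty})$ claimed.

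I do not anticipate a genuine obstacle here: the content of the lemma is purely the bookkeeping that $m$ iterations of $b$ partition the $x$-axis into $k^m$ affine pieces, on each of which $A_{m,y_0}$ inherits $\alpha$-Hölder regularity with the expected blow-up $k^{m\alpha}$, and introduce at most $k^m$ discontinuities. The only point requiring a touch of care is that the $L^\infty$ term in Lemma~\ref{l:Prop4} carries a factor $|S|$ rather than $|S|^\alpha$, so the Hölder and jump contributions combine cleanly into the single factor $k^m$ after using $\alpha \leq 1$.
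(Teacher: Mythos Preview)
Your proposal is correct and follows essentially the same approach as the paper: identify $A_{m,y_0}$ as piecewise $\alpha$-H\"older with jump set of size $k^m$ and piecewise H\"older constant at most $k^{m\alpha}|A|_{\alpha,x}$, then apply Lemma~\ref{l:Prop4} directly and use $\alpha\leq 1$ to absorb $k^{m\alpha}$ into $k^m$. The paper's proof is the same bookkeeping, phrased slightly more tersely.
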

\begin{proof}
	It is clear that $A_{m,y_0}$ is piecewise $\alpha$-H\"older with jumps at $S_m := \{i/k^m : i = 1,\ldots, k^m\}$.
	
	We can also bound its H\"older constant. Suppose $[x,z] \subset (0,1] \backslash S_m$. This means that $b^l(x,y_0)$ and $b^l(x,z_0)$ lie on the same piece of $b$ for all $0 \leq l < m$, and therefore that $b^m(x,y_0)$ and $b^m(z,y_0)$ have the same $z$ component. As a result,
	\begin{align*}
	|A_{m,y_0}(x)- A_{m,y_0}(z)| &\leq |A(b^m(x,y_0)) - A(\kappa^m(z),y_0)|\\
	&\leq |A|_{\alpha,x} |\kappa^m(x) - \kappa^m(z)|^\alpha\\
	&\leq |A|_{\alpha,x} k^{m\alpha} |x-z|^\alpha.
	\end{align*}
	From \eqref{eq:PiecewiseHolderSemiNormDef}, this means that $|A_{m,y_0}|_{C^\alpha(S_m^c)} \leq |A|_{\alpha,x} k^{m\alpha}$.
	
	Applying Lemma~\ref{l:Prop4}, we get that 
	\begin{align*}
	\left| \int_0^1 A_{m,y_0} \circ \kappa^n \,\dd\nu - \int_0^1 A_{m,y_0}\,\dd x \int_0^1 \dd\nu \right| &\leq \frac{4C_\nu}{\alpha+\eta-1} |k|^{-n\eta} (|A_{m,y_0}|_{C^\alpha(S_m^c)} + |S_m| \|A_{m,y_0}\|_{L^\infty})\\
	&\leq \frac{4C_\nu}{\alpha+\eta-1} k^{-n\eta} (|A|_{\alpha,x} k^{m\alpha} + k^m \|A\|_{L^\infty})\\
	&\leq \frac{4C_\nu}{\alpha+\eta-1} k^{m-n\eta} \left(|A|_{\alpha,x} + \|A\|_{L^\infty}\right),
	\end{align*}
	as required.
\end{proof}

\begin{proposition} \label{p:Prop6}
	For all $\beta > 0$, $m \in \N$, $A$ with finite $|\cdot|_{\beta,y}$ norm, and $x,y,y_0 \in [0,1]$,
	\[ | A_{m,y_0}(x) - (A\circ b^m)(x,y) | \leq \mu^{\beta m} |A|_{\beta,y} |y - y_0|^\beta.\]
\end{proposition}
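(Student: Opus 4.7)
The plan is to exploit the skew-product structure of $b$: iterating once sends $(x,y)$ to $(\kappa(x), v_{\lceil kx\rceil}(y))$, so the $y$-fibre dynamics is a contraction whose specific choice depends only on the $x$-itinerary. Consequently, for any fixed $x \in [0,1]$ (away from the measure-zero set where $\kappa^l(x)$ hits a jump for some $l < m$), one has
\[ b^m(x,y) = (\kappa^m(x),\, V_x^m(y)), \]
where $V_x^m = v_{i_m} \circ v_{i_{m-1}} \circ \cdots \circ v_{i_1}$ is the $m$-fold composition of the contractions prescribed by the itinerary $i_l = \lceil k \kappa^{l-1}(x)\rceil$. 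Crucially, $V_x^m$ depends only on $x$, not on $y$.

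First I would observe that since each $|v_i'| \leq \mu < 1$, the chain rule gives $|(V_x^m)'| \leq \mu^m$ uniformly in $x$, and hence
\[ |V_x^m(y) - V_x^m(y_0)| \leq \mu^m |y - y_0|. \]
Next I would write the two quantities being compared in a way that exposes their difference only in the $y$-coordinate:
\[ A_{m,y_0}(x) = A(\kappa^m(x), V_x^m(y_0)), \qquad (A \circ b^m)(x,y) = A(\kappa^m(x), V_x^m(y)). \]
Their first arguments coincide, so I can apply the directional H\"older semi-norm $|A|_{\beta,y}$ to bound the difference by
\[ |A|_{\beta,y} \, |V_x^m(y) - V_x^m(y_0)|^\beta \leq |A|_{\beta,y} \, \mu^{m\beta} |y - y_0|^\beta, \]
which is exactly the claimed inequality.

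The only subtlety is that the representation $b^m(x,y) = (\kappa^m(x), V_x^m(y))$ is only valid off the finite set $\{x : \kappa^l(x) \in \{i/k : i = 1,\ldots,k-1\} \text{ for some } 0 \leq l < m\}$, since the branch choice is ambiguous at discontinuities of $\lceil k \cdot\rceil$. Away from this set, the calculation above is immediate; at such $x$ the inequality extends by continuity of $A$ and of $b^m$ in the interior of each $m$-cylinder, since $V_x^m(y_0), V_x^m(y)$ both converge to the same one-sided limit from either side (the formula for the bound involves only the common Lipschitz constant $\mu^m$, which is independent of the branch). Thus no step in the argument is really the main obstacle; the proof is essentially a one-line consequence of uniform contraction in the stable direction, together with the $\beta$-H\"older regularity of $A$ in $y$.
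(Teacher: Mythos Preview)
Your proof is correct and follows essentially the same idea as the paper's: both exploit the skew-product structure $b^m(x,y) = (\kappa^m(x), V_x^m(y))$ together with the uniform contraction $|V_x^m(y)-V_x^m(y_0)|\le \mu^m|y-y_0|$ and the $\beta$-H\"older regularity of $A$ in $y$. The only cosmetic difference is that the paper packages this as an induction on $m$, whereas you unroll the composition directly; your worry about branch ambiguity is in fact unnecessary, since $\lceil kx\rceil$ is well-defined at every $x$ and both $A_{m,y_0}(x)$ and $(A\circ b^m)(x,y)$ use the same itinerary.
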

\begin{proof}
	We prove this by induction on $m$. We have that
	\[ A_{0,y_0}(x) - A(x,y) = A(x,y_0) - A(x,y), \]
	which is bounded for all $y,y_0 \in [0,1]$ by $|A|_{\beta,y} |y - y_0|^\beta$. Suppose then that our proposition holds for some $m$. Then for any $y \in [0,1]$,
	\[ b^{m+1}(x,y)= b^m(\kappa(x), \nu_i(y)), \]
	where the map branch $i = \lceil kx \rceil$. As a consequence, $A_{m+1,y_0}(x) = A_{m,\nu_i(y_0)}(\kappa(x))$, and so
	\begin{align*}
	| A_{m+1,y_0}(x) - (A\circ b^{m+1})(x,y) | &= A_{m,\nu_i(y_0)}(\kappa(x)) - (A\circ b^m)(\kappa(x), \nu_i(y))\\
	&\leq \mu^{\beta m} |A|_{\beta,y} |\nu_i(y) - \nu_i(y_0)|^\beta\\
	&\leq \mu^{\beta(m+1)} |A|_{\beta,y} |y - y_0|^\beta
	\end{align*}
	as required for the inductive step, where we used that the $\nu_i$ contract points by a factor of $\mu$.
\end{proof}

We can then put Lemma~\ref{l:Prop5} and Proposition~\ref{p:Prop6} together to prove a primitive version of Lemma \ref{l:BakerLemma}:
\begin{proposition} \label{p:Prop7}
	Suppose $\alpha + \eta > 1$. Then there exists $\xi < 1$ depending only on $\eta, k, \mu$ and $\beta$ and there also exists $C$ such that if $\psi$ is $C^1$ with $\psi'=0$ on a finite set, and $\nu$ is an atomless measure such that $(\nu,\psi)$ has $(\eta,C_{\nu,\psi})$ Fourier decay, then
	\begin{equation}\label{eq:Prop7}
	\left| \int_D (A \circ b^n)(\psi(y),y)\,\dd\nu(y) - \int A\,\dd\rho \int \dd\nu \right| \leq C C_{\nu,\psi} \xi^n \|A\|_{\alpha;\beta}. 
	\end{equation}
\end{proposition}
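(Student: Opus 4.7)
The plan is to decompose the time $n$ as $(n-m) + m$ with $m = \lfloor \lambda n \rfloor$ for a sufficiently small $\lambda > 0$, so that the first $n-m$ iterates serve to distribute mass in the $x$-direction (handled by Lemma~\ref{l:Prop5} applied to the pushforward $\psi_*\nu$), while the final $m$ iterates contract the $y$-coordinate strongly enough to forget the initial conditioning on $y$ (handled by Proposition~\ref{p:Prop6}). Concretely, writing $b^n(\psi(y),y) = b^m(\kappa^{n-m}\psi(y),\tilde v(y))$ where $\tilde v(y)\in[0,1]$ is the compound contraction of $y$ along the first $n-m$ steps, and fixing any $y_0 \in [0,1]$, Proposition~\ref{p:Prop6} gives
\[ \bigl| A(b^n(\psi(y),y)) - A_{m,y_0}(\kappa^{n-m}\psi(y)) \bigr| \leq \mu^{\beta m}|A|_{\beta,y}, \]
so integrating against $\nu$ exchanges the two-dimensional dynamics on the curve for a one-dimensional integrand, at a cost of $\mu^{\beta m}|A|_{\beta,y}\int|\dd\nu|$.

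Next, I would apply Lemma~\ref{l:Prop5} to the measure $\psi_*\nu$ on $[0,1]$. The hypothesis that $(\nu,\psi)$ has $(C_{\nu,\psi},\eta)$ Fourier decay is, by definition, exactly that $(\mathrm{id},\psi_*\nu)$ has $(C_{\nu,\psi},\eta)$ Fourier decay, and $\int|\dd(\psi_*\nu)| \leq \int|\dd\nu|\leq C_{\nu,\psi}$. The one nontrivial check is that $\psi_*\nu$ is atomless: since $\psi \in C^1$ has only finitely many critical points it is piecewise strictly monotonic on $[0,1]$, so $\psi^{-1}(\{x\})$ is finite for every $x$, whence $\psi_*\nu(\{x\}) = \nu(\psi^{-1}(\{x\})) = 0$. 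Lemma~\ref{l:Prop5} then yields
\[ \Bigl|\!\int\! A_{m,y_0}(\kappa^{n-m}\psi(y))\,\dd\nu(y) - \!\int_0^1\! A_{m,y_0}(x)\,\dd x \!\int\! \dd\nu \Bigr| \leq \frac{4C_{\nu,\psi}}{\alpha+\eta-1}\, k^{m(1+\eta)-n\eta}\bigl(|A|_{\alpha,x}+\|A\|_{L^\infty}\bigr). \]

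Finally, to replace $\int_0^1 A_{m,y_0}\,\dd x$ by $\int A\,\dd\rho$ I would use $b$-invariance of $\rho = \mathrm{Leb}\times\nu_0$ from Proposition~\ref{p:SRBMeasure}, which together with Fubini gives $\int A\,\dd\rho = \int\!\int A(b^m(x,y))\,\dd x\,\dd\nu_0(y)$, whereupon Proposition~\ref{p:Prop6} (integrated in $y$ against $\nu_0$) yields $|\int_0^1 A_{m,y_0}\,\dd x - \int A\,\dd\rho| \leq \mu^{\beta m}|A|_{\beta,y}$. Summing the three contributions produces
\[ \Bigl|\!\int (A\circ b^n)(\psi(y),y)\,\dd\nu(y) - \!\int A\,\dd\rho \!\int \dd\nu\Bigr| \leq C_{\nu,\psi}\|A\|_{\alpha;\beta}\Bigl(2\mu^{\beta m} + \tfrac{4}{\alpha+\eta-1}\,k^{m(1+\eta)-n\eta}\Bigr). \]
Choosing $\lambda \in (0,\eta/(1+\eta))$ and $m = \lfloor\lambda n\rfloor$ makes both terms decay geometrically in $n$, with a rate $\xi < 1$ depending only on $\eta,k,\mu,\beta$ (and $\lambda$). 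The main obstacle is conceptual rather than technical: identifying that the relevant object on which to invoke Lemma~\ref{l:Prop5} is the pushforward $\psi_*\nu$, so that the Fourier-dimension hypothesis on $(\nu,\psi)$ transfers to the one-dimensional $x$-integrand; everything else is the bookkeeping above.
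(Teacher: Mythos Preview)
Your argument is correct and follows essentially the same route as the paper: split $n=m+l$, use Proposition~\ref{p:Prop6} to replace $A\circ b^n$ by $A_{m,y_0}\circ\kappa^l$ along the curve at cost $\mu^{\beta m}$, apply Lemma~\ref{l:Prop5} to the pushforward $\psi_*\nu$ to get the $k^{m-l\eta}$ term, and use Proposition~\ref{p:Prop6} once more (plus $b$-invariance of $\rho$) to pass from $\int A_{m,y_0}\,\dd x$ to $\int A\,\dd\rho$. The only cosmetic differences are that the paper writes $l$ for your $n-m$ and chooses a specific value of $l$ to exhibit an explicit rate $\xi$, whereas you leave $\lambda\in(0,\eta/(1+\eta))$ free; your explicit verification that $\psi_*\nu$ is atomless is a welcome addition the paper merely asserts.
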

\begin{proof}
	We will divide $n = m + l$ and the difference in \eqref{eq:Prop7} up into several pieces that we will bound largely using previous results.
	
	To begin with, we have as an application of Proposition~\ref{p:Prop6} that for any $y_0$,
	\begin{equation}\label{eq:Prop7Piece1}
	\left|\int_D \left((A \circ b^{m+l})(\psi(y),y) - (A_{m,y_0} \circ b^l)(\kappa^n(\psi(y)))\right)\,\dd\nu(y)\right| \leq \mu^{\beta m} |A|_{\beta,y} \int |\dd \nu|.
	\end{equation}
	
	Now, $\psi_*\nu$ is atomless with $(\id,\psi_*\nu)$ having $(\eta,C_{\nu,\psi})$ Fourier decay, so we can apply Lemma~\ref{l:Prop5} to obtain
	\begin{equation}\label{eq:Prop7Piece2} 
	\left| \int_0^1 (A_{m,y_0} \circ b^l)(\kappa^n(\psi(y)))\,\dd\nu(y) - \int_0^1 A_{m,y_0}\,\dd x \int \dd\psi_*\nu \right| \leq \frac{4C_{\nu,\psi}}{\alpha + \eta - 1} k^{m - l\eta}(|A|_{\alpha, x} + \|A\|_{L^\infty}). 	
	\end{equation}
	
	Next,
	\begin{align*}
	\int_0^1 A_{m,y_0}\,\dd x &= \int_0^1 A(b^m(x,y_0))\,\dd x\\
	&= \int_D A(b^m(x,y_0))\, \dd\rho(x,y)
	\end{align*}
	because from Proposition~\ref{p:SRBMeasure}, the SRB measure $\rho$ projects to Lebesgue measure in the $x$ coordinate. With this, we have that
	\begin{align*}
	\left| \int_0^1 A_{m,y_0}\,\dd x - \int_D A\circ b^m\,\dd\rho \right| &= \left| \int_D (A_{m,y_0} - A\circ b^m)\,\dd\rho \right| \\
	&\leq \mu^{\beta m} |A|_{\beta,y}
	\end{align*}
	using Proposition~\ref{p:Prop6}. Recalling also that $\rho$ is $b$-invariant, and pushing $\nu$ forward preserves its total integral, we can say that 
	\begin{equation}\label{eq:Prop7Piece3}
	 \left| \int_0^1 A_{m,y_0}\,\dd x \int \dd \psi_*\nu - \int_D A\,\dd\rho \int_D \dd\nu \right| \leq \mu^{\beta m} |A|_{\beta,y} \int |\dd \nu|. 
	\end{equation}
	Putting \eqref{eq:Prop7Piece1}, \eqref{eq:Prop7Piece2} and \eqref{eq:Prop7Piece3} together, we have that
	\[ \left|\int_D (A \circ b^{m+l})(\psi(y),y)\,\dd\nu - \int_D A\,\dd\rho \int_D \dd\nu \right| \leq 2 \int |\dd\nu|\,\mu^{\beta m} |A|_{\beta, y} + \frac{4C_{\nu,\psi}}{\alpha + \eta - 1} k^{m - l\eta}(|A|_{\alpha, x} + \|A\|_{L^\infty}). \]
	By setting $l = \lceil(1 - \tfrac{\eta\log k}{\log \mu^{-1} \beta + (1+\eta) \log k}) n \rceil$ we obtain that there exists a constant $C$ depending on $\alpha, \beta, \eta, \mu, k$ such that \eqref{eq:Prop7} holds with 
	\[ \xi = k^{-\eta/(\beta + (1+\eta) \log k/\log \mu^{-1})}. \]
	\end{proof}

At this point, if we set $B \equiv 1$, we could prove Lemma \ref{l:BakerLemma} already. However, to incorporate it we need to show that we can multiply $\nu_0$ by sufficiently smooth H\"older functions and still retain adequate Fourier decay.

\begin{lemma}\label{l:Prop8}
	Suppose that $\psi:[0,1]\circlearrowleft$ is a $C^1$ diffeomorphism onto its image, and $\nu_0$ is an atomless measure such that $(\psi,\nu)$ has $(\eta, C_{\nu_0,\psi})$ Fourier decay.
	Then for all $\gamma \in (1-\eta,1]$ there exists $C$ such that for all $B \in C^\gamma(D)$,
	\[ \left| \int_0^1 e^{2\pi i j \psi(y)} B(y,\psi(y)) \,\dd\nu(y) \right| \leq C_{\nu_0,\psi} C \| B\|_{C^\gamma}|j|^{-(\eta+\gamma-1)},\]
	that is, if the measure $\dd\sigma(y) := B(y,\psi^{-1}(y))\dd\nu(y)$, then $(\sigma,\psi)$ has $(\eta+\gamma-1, C_{\nu,\psi} C)$ decay; furthermore $\sigma$ is atomless.
\end{lemma}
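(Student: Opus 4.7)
The plan is to change variables via $u = \psi(y)$ to convert the weighted oscillatory integral into a Fourier-type estimate against $\tilde\nu := \psi_*\nu$, then expand the remaining $C^\gamma$ weight as a Fourier series on the circle, and finally bound the resulting convolution of coefficient sequences.

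Explicitly, setting $g(u) := B(\psi^{-1}(u), u)$, the left-hand side equals $\int e^{2\pi i j u}\, g(u)\, \dd\tilde\nu(u)$. Since $\nu$ is atomless and $\psi$ is a $C^1$ diffeomorphism onto its image, $\tilde\nu$ is atomless, and the hypothesis translates into $\left|\int e^{2\pi i k u}\,\dd\tilde\nu(u)\right| \leq C_{\nu_0,\psi}\max(|k|,1)^{-\eta}$ for all $k\in\Z$. The chain rule together with the $C^1$-regularity of $\psi^{-1}$ gives $\|g\|_{C^\gamma(\psi([0,1]))} \leq C_\psi \|B\|_{C^\gamma}$. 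I then extend $g$ to a periodic $C^\gamma$ function $\bar g$ on $\R/\Z$ with $\|\bar g\|_{C^\gamma}$ bounded by a constant multiple of $\|g\|_{C^\gamma}$, via a standard H\"older extension; since $\tilde\nu$ is supported on $\psi([0,1])$, this extension leaves the integral unchanged.

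Next, I expand $\bar g(u) = \sum_{m \in \Z} c_m e^{2\pi i m u}$ and invoke the classical bound $|c_m| \leq C' \|\bar g\|_{C^\gamma} \max(|m|,1)^{-\gamma}$ for H\"older functions on the circle. The hypothesis $\gamma+\eta>1$ ensures absolute convergence, so
\[ \int e^{2\pi i j u}\, \bar g(u)\, \dd\tilde\nu(u) = \sum_{m \in \Z} c_m \int e^{2\pi i (j+m) u}\, \dd\tilde\nu(u), \]
and the problem reduces to the discrete convolution inequality
\[ S_j := \sum_{m \in \Z} \max(|m|,1)^{-\gamma}\,\max(|j+m|,1)^{-\eta} \leq C''\, |j|^{1-\gamma-\eta}, \]
with $C''$ depending only on $\gamma$ and $\eta$. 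I would prove this by splitting $\Z$ into: (A) $|m|\leq |j|/2$, where $\max(|j+m|,1)^{-\eta} \leq (|j|/2)^{-\eta}$ and $\sum_{|m|\leq |j|/2}\max(|m|,1)^{-\gamma} = O(|j|^{1-\gamma})$; (B) the symmetric region $|j+m|\leq|j|/2$; and (C) the complement, handled by further splitting into an intermediate piece $|m| \in (|j|/2, 2|j|)$ of cardinality $O(|j|)$ with each term $O(|j|^{-\gamma-\eta})$, and a tail $|m|>2|j|$ on which $|j+m|\geq |m|/2$ so that $\sum_{|m|>2|j|}|m|^{-\gamma-\eta} = O(|j|^{1-\gamma-\eta})$ using $\gamma+\eta>1$.

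Atomlessness of $\sigma$ is immediate, since $\sigma$ is absolutely continuous with respect to $\nu$ with bounded density $B(\cdot, \psi(\cdot))$, and $\nu$ is atomless. The main obstacle I expect is the bookkeeping near the ``resonant'' region $m\approx -j$ in the convolution estimate, which is exactly what the three-region split is designed to handle cleanly. A minor wrinkle is the borderline case $\gamma=1$, where $|c_m|\leq C'|m|^{-1}$ yields an extra logarithmic factor in $S_j$; this can be absorbed either by infinitesimally reducing $\gamma$ in the statement or by a direct integration-by-parts argument exploiting $\bar g' \in L^\infty$.
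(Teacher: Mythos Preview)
Your proof is essentially identical to the paper's: both push forward to $\psi_*\nu$, extend $B(\psi^{-1}(\cdot),\cdot)$ to a $C^\gamma$ function on the circle, bound its Fourier coefficients by $O(|m|^{-\gamma})$, and control the resulting convolution $\sum_m \max(|m|,1)^{-\gamma}\max(|j+m|,1)^{-\eta}$. The paper packages the sum--integral interchange into a separate Fej\'er-kernel lemma (Lemma~\ref{l:DCT}) rather than appealing to absolute convergence, and asserts the convolution bound in one line where you give the explicit three-region split; your flagging of the logarithmic loss at $\gamma=1$ is a point the paper's proof glosses over.
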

\begin{proof}
	We have that $\sigma$ is atomless because it is defined as an atomless measure multiplied by a bounded function.
	 
	Define function $B_\psi \in C^\gamma([0,1])$ such that $B_\psi(x) = B(\psi^{-1}(x),x)$ on $\psi([0,1])$. It is possible to do this so that 
	\[ |B_\psi|_{C^\gamma([0,1])} = |B_\psi|_{C^\gamma(\psi([0,1]))} \leq C'_{\psi'} |B|_{C^\gamma}, \]
	where $C'_{\psi'} = 1 + \|1/\psi'\|_{L^\infty} \geq 1$, and $\|B_\psi\|_{L^\infty} \leq \|B\|_{L^\infty}$.
	
	By Lemma \ref{l:HolderBVFourierDecay} we have that $\hat b_l$, the Fourier coefficients of $B_\psi$, have a certain bound,
	\[ |\hat b_l | \leq \begin{cases}\|B\|_{L^\infty},& l = 0\\ |l|^{-\gamma} C'_{\psi'} |B|_{C^\gamma}, & \textrm{else} \end{cases} \leq \min\{1,|l|\}^{-\gamma} C'_{\psi'} \|B\|_{C^\gamma} \]
	and so in particular for any $j$, the Fourier coefficients of $e^{2\pi i j\cdot} B_\psi$, which are just shifts of those of $B_\psi$, decay as $\O(l^{-\gamma})$.
	
	Therefore, we can apply Lemma \ref{l:DCT} to get that 
	\[ \int_0^1 e^{2\pi i j \psi(y)} B(y,\psi(y)) \,\dd\nu(y) = \sum_{l \in \Z} \hat b_{-j-l} \hat v_{l} \]
	and so
	\begin{align*} \left|\int_0^1 e^{2\pi i j \psi(y)} B(y,\psi(y)) \,\dd\nu(y)\right| &\leq C_{\nu,\psi} C'_{\psi'} \|B\|_{C^\gamma} \sum_{l = -\infty}^\infty \min\{|j+l|^{-\gamma},1\} \min\{|l|^{-\eta},1\}\\
	&\leq C_{\nu,\psi} C'_{\psi'} \|B\|_{C^\gamma} C'' \min\{|j|^{-(\eta + \gamma - 1)},1\}
	\end{align*}
	for some $C''$ depending on $\gamma, \eta$, giving what is required.
\end{proof}

This is all we need to prove Lemma \ref{l:BakerLemma}.
\begin{proof}[Proof of Lemma \ref{l:BakerLemma}]
	By the definition of $\rho_t$ in \eqref{eq:CondMeasDef},
	\[ \int_{D} A(x,y)\,\dd\rho_t(x,y) = \int_0^1 A(\psi(y)-t,y)\,\dd\nu_0(y), \]
	and so
	\begin{align*}	\rho_0(A\circ b^n\, B) - \rho(B) \rho(A) &= %\int_0^1 (A\circ b^n)(\psi(y),y) B(\psi(y),y)\dd\nu_0(y) = 
	\int_0^1 (A\circ b^n)(\psi(y),y)\, \dd \sigma(y) - \int_{D} A\,\dd\rho \int_0^1 \dd \sigma(y)
	\end{align*}
	where $\sigma := B(\cdot,\psi^{-1}(\cdot))\nu_0$. 
	
	Since by assumption, $(\nu_0,\psi)$ has $(\eta, C_{\nu_0,\psi})$ Fourier decay, $\sigma$ has $(\eta + \gamma - 1, C_{\nu_0,\psi} C)$ decay for some $C$ depending on $\eta, \gamma, \psi$ as a result of Lemma~\ref{l:Prop8}. From this, $\sigma$ is also atomless. An application of Proposition~\ref{p:Prop7} gives us our result.
\end{proof}

Finally, the construction of $\rho$ also allows us to prove the corollary:
\begin{proof}[Proof of Corollary~\ref{c:BakerCover}]
	Using Proposition~\ref{p:ExponentialHausdorffDistance} it is enough to prove that the SRB measure $\rho$ is lower-Ahlfors regular. Since $\rho$ is a Cartesian product of Lebesgue measure with the Gibbs measure $\nu_0$, $\rho$ is lower-Ahlfors regular if $\nu_0$ is.
	
	Fix $\delta >0$, and let $M = \lceil \tfrac{\log \delta}{\log \mu} \rceil$. For any $y$ in the support of $\nu_0$ we have that there exists a composition of $M$ contractions in $\{v_j\}$, which we notate as $v_{\mathbf j}$, such that $y = v_{\mathbf j}(y_M)$ for some $y_M \in [0,1]$. Set inclusion tells us that we must therefore have $y \in v_{\mathbf j}([0,1])$.
	
	The uniform contraction of the $v_j$ by a factor of $\nu$ means that the diameter of this set $v_{\mathbf j}([0,1])$ must be bounded by $\mu^M$, which by construction is smaller than $\delta$. Hence, $v_{\mathbf j}([0,1])$ must be contained in the ball $B(y,\delta)$. 
	
	On the other hand, the $\nu_0$-measure of this set $v_{\mathbf j}([0,1])$ can be given using the constitutive relation of $\nu_0$ as $k^{-M} \nu_0([0,1]) = k^{-M}$.
	
	We can therefore say that 
	\begin{align*}
	\nu_0(B(y,\delta)) &\geq \nu_0(v_{\mathbf j}([0,1]))\\
	& = k^{-M} \geq k^{-1} \delta^{-\log k/\log \mu},
	\end{align*}
	as required for lower Ahlfors regularity.
	
	Since the constants here are independent of $t$, we obtain the uniform-in-$t$ convergence asked for.
\end{proof}

\section{Proofs of some integration lemmas}\label{a:baker}

Here we collect some proofs of lemmas used in Appendix~\ref{a:baker} involving integration.
\begin{proof}[Proof of Lemma~\ref{l:DCT}]
	For $l \in \N^+$ the $1$-periodic Fej\'er kernel is given by 
	\[ F_l(x) = \frac{1 - \cos 2\pi l x}{l(1 - \cos 2\pi x)} = \sum_{j=-l}^{l} \left(1 - \frac{|j|}{l}\right) e^{2\pi i j x}. \]
	It is non-negative with total integral equal to $1$, and for all $u \in [0,1/2]$,
	\[ \lim_{l\to\infty}\int_{-u}^u F_l\,\dd x = 1. \]
	As a result, $\phi \ast F_l$ converges pointwise to $\phi$ as $l \to \infty$ at all points of continuity of $\phi$: which is to say, $\nu$-almost everywhere, since $\phi$ has a finite number of jumps and $\nu$ has no atoms. Furthermore, the functions $\phi \ast F_l$ are uniformly bounded by the constant function $\| \phi \|_\infty$ (which is $\nu$-integrable). As a result, we can apply the dominated convergence theorem to say that 
	\begin{equation}
	\int \phi \,\dd\nu = \lim_{l\to\infty} \int_0^1 \phi \ast F_l\,\dd\nu \label{eq:DCT}
	\end{equation}
	Now,
	\begin{align}
	\int_0^1 \phi \ast F_l\,\dd\nu &= \int_0^1 \sum_{j=-l}^l \left(1 - \frac{|j|}{l}\right) \hat \phi_{-j} e^{-2\pi i j x} \,\dd\nu(x)\notag\\
	&= \sum_{j=-l}^l \left(1 - \frac{|j|}{l}\right) \hat\phi_{-j} \hat\nu_{j},\label{eq:CesaroSum}
	\end{align}
	where in the last line we could interchange integration and (finite) summation. Now, \eqref{eq:CesaroSum} is none other than the $l$th C\'esaro sum of $\hat\psi_{-j}\hat\nu_j$, whose limit is therefore the full sum, the full sum being absolutely convergent. Substituting this limit into \eqref{eq:DCT} we obtain \eqref{eq:IntegralFourierDecomposition} as required.
\end{proof}

\begin{proof}[Proof of Lemma~\ref{l:HolderBVFourierDecay}]
	We can divide up the interval of integration into $j$ even pieces as such:
	\[ \int_0^1 \phi(x) e^{-2\pi ijx}\,\dd x = \sum_{l=0}^{|j|-1} \int_{l/|j|}^{(l+1)/|j|} \phi(x) e^{-2\pi ijx}\, \dd x. \]
	We always have on any of these segments that 
	\begin{equation}\left| \int_{l/j}^{(l+1)/j} \phi(x) e^{-2\pi ijx}\, \dd x \right| \leq \frac{1}{|j|} \|\phi\|_\infty. \label{eq:BVContribution}\end{equation}
	Let the index set of segments with jumps be
	\[ J_j = \left\{ l: \left(\tfrac{l}{|j|},\tfrac{l+1}{|j|}\right) \cap X \neq \emptyset\right\}.\]
	Clearly we have that the cardinality of $J_j$ is smaller than that of $X$ for all $j$. For $l \notin J_j$, we can do the usual H\"older continuity bound on the integral, using to begin with that $e^{2\pi ijx}$ is mean zero:
	\[ \int_{l/|j|}^{(l+1)/|j|} \phi(x) e^{-2\pi ijx}\,\dd x = \int_{l/|j|}^{(l+1)/|j|} \left(\phi(x) - \phi(l/|j|)\right) e^{-2\pi ijx} \, \dd x \]
	and then that
	\[ | \phi(x) - \phi(l/|j|) | \leq |\phi|_{C^\alpha(X^c)} (x - l/|j|)^\alpha \leq |\phi|_{\alpha,X^c} |j|^{-\alpha} \]
	to get that
	\begin{equation}
	\left| \int_{l/|j|}^{(l+1)/|j|} \phi(x) e^{-2\pi ijx}\,\dd x \right| \leq |\phi|_{C^\alpha(X^c)} |j|^{-\alpha-1}. \label{eq:HolderContribution}
	\end{equation}
	Combining \eqref{eq:BVContribution} for the segments with jumps and \eqref{eq:HolderContribution} otherwise, we get that 
	\begin{align*}
	\left| \int_0^1 \phi(x) e^{-2\pi ijx}\,\dd x \right| &\leq |J_j| \frac{1}{|j|} \|\phi\|_\infty + (|j| - |J_j|) |\phi|_{C^\alpha(X^c)} |j|^{-\alpha-1} \\
	& \leq |X| \|\phi \|_\infty |j|^{-1} + |\phi|_{C^\alpha(X^c)} |j|^{-\alpha}
	\end{align*}
	as required.
\end{proof}

	\section{Proof of results in Section~\ref{s:Definition}}\label{a:CoveringProofs}
	
	\begin{proof}[Proof of Proposition~\ref{p:ConservativeCM}]
		Since $f$ is conservative and topologically mixing it must have one physical measure $\rho$ which is Lebesgue measure.
		
		Because $H$ has no critical points on $\ell_H$, the conditional measure $\rho(x\mid H(x) = 0)$ can be well-defined as a $C^0$-weak limit of 
		\begin{equation} \mu_\delta(x) = \frac{1}{c(\delta)} \psi(|H(x)|/\delta), \label{eq:MuDelta} \end{equation}
		for some bump function $\psi \in C^\infty_c$, with $c(\delta) = \mathcal{O}(\delta^{d})$.
		
		We know that the stable vector bundle of $f$ is continuous and $\ell_H$ is compact, so its tangent space must be {\it uniformly} transverse to stable vector fields, and so we can find a set of admissible leaves as defined in \cite{Gouezel06} uniformly transverse to $\ell_H$. From the definition conditional measure is the weak limit of the following set of densities 
		
		It can be shown by a computation that the family \eqref{eq:MuDelta} is convergent in the $\mathcal{B}^{1,1}$ norm of \cite{Gouezel06}, including when multiplied by $C^1$ functions $B$, and so the conditional measure lies in $\mathcal{B}^{1,1}$, on which the Perron-Frobenius operator of $f$ has a spectral gap \cite[Theorem~2.3]{Gouezel06}. Hence, exponential conditional mixing obtains with $r=1$.
	\end{proof}
	
		\begin{proof}[Proof of Proposition~\ref{p:HausdorffDistance}]
		We will consider convergence of $d_{\rm Haus}(\Lambda,\overline{T^n(\ell_H \cap \Lambda)})$, as it is the same thing as without the set closure.
		
		From its definition, the conditional measure's support $\supp\mu$ must be contained in $\ell_H \cap \Lambda$, and so $\supp T^n_*\mu$ is contained in $\overline{T^n(\ell_H \cap \Lambda)}$.
		
		Now, $\overline{T^n(\ell_H \cap \Lambda)} \subset \overline{T^n(\Lambda)} \subseteq \overline{\Lambda} = \Lambda$, so it is enough to show that 
		\begin{equation} \lim_{n\to\infty} \inf_{x\in\Lambda} d(x,\supp T^n_*\mu) = 0. \label{eq:HausdorffDistance}\end{equation}
		
		Fix $\epsilon > 0$, and let $\{B(\xi,\epsilon)\}_{\xi \in \Xi}$ be a finite open cover of $\Lambda$. Then, if for each $\xi \in \Xi$ we can show that $B(\xi,\epsilon)$ has positive $T^n_*\mu$-measure for every $n$ large enough, we have that $d_{\rm Haus}(T^n(\ell_H\cap \Lambda),\Lambda) < 2\epsilon$ for these $n$, and so we are done.
		
		Conditional mixing implies that if $\psi_{\xi,\epsilon}$ is any $C^\infty$ non-negative bump function bounded by $1$ whose support is $B(\xi,\epsilon)$, then
		\[ \lim_{n\to\infty} \int \psi_{\xi,\epsilon} \circ T^n \,\dd\mu = \int \psi_{\xi,\epsilon} \dd\rho \int\dd\mu > 0, \]
		because $\psi_{\xi,\epsilon}>0$ on an open set overlapping with $\Lambda = \supp\rho$. This means that for $n$ large enough,
		\begin{equation*}
		0 = \int \psi_{\xi,\epsilon} \circ T^n\,\dd\mu
		= \int \psi_{\xi,\epsilon} \,\dd T^n_*\mu \leq T^n_*\mu(B(\xi,\epsilon)),
		\end{equation*}
		as required.
	\end{proof}
	
	\begin{proof}[Proof of Proposition~\ref{p:ExponentialHausdorffDistance}]
		By translation and dilation we can construct $C^\infty$ bump functions $\psi_{\xi,\epsilon}$ such that $\psi_{\xi,\epsilon} = 1$ on $B(\xi,\epsilon/2)$, and their $C^r$ norms are bounded $\| \psi_{\xi,\epsilon} \|_{C^1} \leq K \epsilon^{-r}$ for constant $K$.
		
		Lower-Ahlfors regularity of $\rho$ gives us that
		\[ \int \psi_{\xi,\epsilon}\,\dd\rho \geq \rho(B(\xi,\epsilon/2)) \geq C (\epsilon/2)^d, \]
		and the exponential conditional mixing assumption then gives us that for all $\xi,\epsilon$,
		\[ \int \psi_{\xi,\epsilon}\circ T^n \,\dd\mu > C (\epsilon/2)^d - C \xi^n K \epsilon^{-r}. \]
		This will all be positive when $\epsilon \geq K_1 (\xi^{1/(d+r)})^n$ for some constant $K_1$, giving us uniform exponential decay in $n$ of the bound $2\epsilon$ on the Hausdorff distance.
	\end{proof}

\section{Proof of Proposition~\ref{p:CriticalOrbitSingularities}}\label{a:CritOrbitSing}

To prove Proposition~\ref{p:CriticalOrbitSingularities} we will first require the following:
\begin{proposition}\label{p:UnstableManifoldExpansion}
	There exist $C > 0$, $\lambda > 1$ such that for all $I \in \hat\L$ and $n \geq 0$ such that $f^m I \cap \S = \emptyset$ for $m < n$,
	\[ |f^n I| > C \lambda^n |I|. \]
\end{proposition}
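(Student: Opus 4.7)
The plan is to exhibit an invariant unstable cone field in tangent space on which $Df$ is uniformly expanding, and then to apply this to the tangent vector of the line segment $I$, using that $f^n$ acts affinely on $I$ as long as no intermediate iterate crosses $\S$.

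First I would introduce an unstable cone $\mathcal{C}_u := \{(\xi_1,\xi_2) \in \R^2 : |\xi_2| \leq \kappa |\xi_1|\}$ for a suitable $\kappa \in (0,1)$. Away from $\S$, the Lozi map has the locally constant Jacobian
\[
Df_x = \begin{pmatrix} -a\,\sign(x) & 1 \\ b & 0 \end{pmatrix},
\]
so for $\xi = (\xi_1,\xi_2) \in \mathcal{C}_u$ the image is $Df_x \xi = (-a\,\sign(x)\xi_1 + \xi_2, b\xi_1)$. A direct calculation shows that provided $|b| \leq \kappa(a-\kappa)$ (which holds for the stated Lozi parameter ranges, compatibly with the regime treated in \cite{Young85}), we have $Df_x(\mathcal{C}_u) \subseteq \mathcal{C}_u$ strictly, and
\[
\|Df_x \xi\|_\infty \;\geq\; (a-\kappa)\|\xi\|_\infty
\quad\text{for all } \xi \in \mathcal{C}_u,
\]
with expansion constant $\lambda_0 := a-\kappa > 1$ once $\kappa$ is chosen small enough (possible because $a > 1$).

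Next I would check that for every $\vec I \in \vec\L$, the segment $I$ is a line segment whose tangent direction lies in $\mathcal{C}_u$. This is standard for the Lozi map: local unstable manifolds arise as monotone limits under $f^{-1}$ of line segments in the complementary (stable) cone's transverse direction, and by the forward-invariance of $\mathcal{C}_u$ (together with the characterisation of $\Wul(p)$ as a segment whose backward orbit never crosses $\S$), the tangent direction of $I$ is in $\mathcal{C}_u$ — as used implicitly already in Appendix~\ref{s:Algorithms} to ensure uniform transversality of unstable manifolds to $\S$.

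Then I would conclude: under the hypothesis that $f^m I \cap \S = \emptyset$ for $0 \leq m < n$, the iterates $f, f^2, \ldots, f^n$ are each affine on a neighbourhood of $I, f(I), \ldots, f^{n-1}(I)$ respectively, so $f^n$ is affine on a neighbourhood of $I$ and $f^n(I)$ is a line segment with tangent $Df^n \xi$, where $\xi$ is the unit tangent of $I$. Iterating the cone estimate, $\|Df^n \xi\|_\infty \geq \lambda_0^n \|\xi\|_\infty$. Passing from $\|\cdot\|_\infty$ to Euclidean length introduces only a bounded multiplicative factor depending on $\kappa$, so
\[
|f^n I| \;\geq\; C \lambda_0^n |I|,
\]
as required, with $\lambda = \lambda_0$.

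The main obstacle is simply verifying the cone inclusion $|b| \leq \kappa(a-\kappa)$ compatibly with the standing Lozi parameter window and bookkeeping the $\|\cdot\|_\infty$-to-Euclidean norm comparison to extract the constant $C$; the expansion itself is a one-line computation on a $2\times 2$ matrix, so the proof is essentially a standard cone argument rather than a genuinely delicate estimate.
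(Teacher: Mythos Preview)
Your proof is correct and is essentially the same approach as the paper's, only made explicit: the paper's proof is a one-line appeal to the piecewise uniform hyperbolicity of the Lozi map established in \cite{Young85}, and your cone argument is precisely the content of that citation spelled out in detail. There is nothing to correct.
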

\begin{proof}
	The segments $I$ are unstable manifolds: because $f$ is piecewise uniformly hyperbolic \cite{Young85}, these segments are eventually expanded by the action of $f$.
\end{proof}

\begin{proof}[Proof of Proposition~\ref{p:CriticalOrbitSingularities}]
	Define the observable on $\vec \Lambda$
	\begin{align}
	P(\vec I_{p,q}, t) &= \cha(\M_{p} \neq \M_{\pi((\vec I_{p,q}, t))})
	%	Q(\vec I_{p,q}, t) &= \cha(\M_q \neq \M_{\pi((\vec I_{p,q}, t))}).
	\end{align}
	This measures whether $\vec I$ is cut by $\S$ on the left-hand side. %, or on the right-hand side.
	Clearly, if $P(\vec f^{-n}(\vec I, t)) = 1$%(resp. $Q$)
	, then $p_{\vec I} = f^n(s)$ %(resp. $q$)
	for some $s \in \S$. This holds {\it a fortiori} if
	\begin{align} \lim_{N\to\infty} \frac{1}{N} \sum_{n=1}^{N} P(\vec f^{-n}(\vec I, t)) &> 0 \label{eq:PObsBirkhoff}
	% \\ \lim_{N\to\infty} \frac{1}{N} \sum_{n=1}^{N} Q(\vec f^{-n}(\vec I, t)) &> 0 \label{eq:QObsBirkhoff} 
	\end{align}
	In the following we will show that this limit is almost always positive.
	
	Fix $\epsilon > 0$, and let $\vec\Lambda_{\rm erg}$ be an ergodic component of $\vec \Lambda$. Because almost all unstable manifolds have positive measure, the definition of $\vec \Lambda_{\rm erg}$, there exists a set $E \subset \vec \Lambda$ of positive measure such that $|\vec I| > \epsilon$ for all $(\vec I, t) \in E$. It is clear that we can choose $E = E_I \times (0,1)$, $E_I$ being a collection of directed segments.
	
	%	Suppose that for some $(\vec I,t) \in A$, $P(\vec f^n(I,t)) = 0$
	
	We know that for any segment $\vec J$, if $\vec J$ does not cross $\S$ then $\vec f(\vec J,t) = (f(\vec J),t)$. By Proposition~\ref{p:UnstableManifoldExpansion}, this means that for $\vec I \in A_I$, $f^n(\vec I, t) = (f^n(\vec I) t)$ with
	$ |f^n(\vec I)| \geq C \epsilon \lambda^n$, 
	unless $\vec I$ is cut by $\S$ for some $m < n$. However, there is some $m_*$ sufficiently large such that $C \epsilon \lambda^{m_*}$ is greater than the diameter of the attractor $\Lambda$, which brings about a contradiction, as $f^n(I) = \Wul(f^n(\pi(\vec I,0.5)))$ is an segment subset of $\Lambda$. Thus, $f^m(\vec I)$ is cut by $\S$ for some $m > n$. 
	
	Now, recalling that our segments are open segments, $\S$ will cut $f^m \vec I$ at some point $t' \in (0,1)$. As a result, $P(\vec f^{m+1}(I,t)) = 1$ for $t \in (0,t')$%, and $Q(\vec f^{m+1}(I,t)) = 1$ for $t \in (t',1)$
	. This holds for any $\vec I \in E_I$. 
	
	The consequence is that, for at least one $m'$ between $0$ and $m_*$, there exists $B_P%, B_Q 
	\subset f^{m'+1}(A) \subset \vec \Lambda_{\rm erg}$ of positive measure such that $P = 1$ on $B_P$. By applying the Birkhoff ergodic theorem to $(\vec f^{-1},\vec \Lambda_{\rm erg},\vec \rho|_{\vec \Lambda_{\rm erg}})$, the limit (\ref{eq:PObsBirkhoff}) must hold for $\vec \rho$-almost all points on on $\vec \Lambda_{\rm erg}$.
	
	By taking a union over all ergodic components of $\vec \Lambda$ we therefore have that for almost all $\vec I_{p,q} \in \vec \Lambda$, $p_{\vec I} = f^{n}(s)$ for some $s \in \S$. This is to say that $p_{\vec I}$ lies in the forward orbit of the singular line. Using that reversing the direction of segments is a measure isometry, this result equally holds true for $q$.
%	lly, because $\pi: (\vec \Lambda, \vec \rho) \to (\Lambda,\rho)$ is a two-to-one measure-preserving transformation with each $\Wul(\pi(\vec I,t)) = I$, we obtain the desired result.
\end{proof}
%
%\section{$C^\infty$ hyperbolic map of the torus}\label{a:HyperbolicMap}
%
%Defining on the torus $(\R/\Z)^2$ the cat map $f_0(x,y) = (2x + y, x + y) \mod 1$, a distance function constant on the cat map's unstable manifolds $c(x,y) = (\tfrac{2x}{1+\sqrt{5}}-y)\mod 1$, and a perturbation
%\[ \tilde f(x,y) = (x, y + e^{4-1/x(1-x)}(c(x,y) - 1/2)) \mod 1 \]
%we have that $f  = \tilde f \circ f_0$ is an injective but not surjective $C^\infty$ hyperbolic map of an open subset of the torus. We chose this rather than an Anosov diffeomorphism for better comparison with the H\`enon map.

%    contractionamt = -0.5+mod((1+sqrt(5))\2xn-yn,1)
%a = 1; b = 0.5; c = 4/a
%yn += exp(c-1/(a*xn*(1-xn)))*contractionamt*b
%xn,mod(yn,1)

\subsection*{Acknowledgements}

This research has been supported by the European Research Council (ERC) under the European Union's Horizon 2020 research and innovation programme (grant agreement No 787304), as well as by the ETH Zurich Institute for Theoretical Studies.

The author thanks Viviane Baladi for her comments on early stages of the manuscript.

\subsection*{Data availability statement}
The datasets generated and/or analysed during the current study are available from the author on reasonable request.

\subsection*{Conflict of interest statement}
The author has no conflicts of interest to declare.

\bibliographystyle{plain}
\bibliography{../lozi-paper/lozi}

\begin{thebibliography}{10}

\bibitem{Baladi92}
Viviane Baladi.
\newblock Optimality of {R}uelle's bound for the domain of meromorphy of
  generalized zeta functions.
\newblock {\em Portugaliae mathematica}, 49(1):69--83, 1992.

\bibitem{Baladi00}
Viviane Baladi.
\newblock {\em Positive transfer operators and decay of correlations},
  volume~16.
\newblock World scientific, 2000.

\bibitem{BaladiGouezel09}
Viviane Baladi and S{\'e}bastien Gou{\"e}zel.
\newblock Good {B}anach spaces for piecewise hyperbolic maps via interpolation.
\newblock In {\em Annales de l'Institut Henri Poincar{\'e} C, Analyse non
  lin{\'e}aire}, volume~26, pages 1453--1481. Elsevier, 2009.

\bibitem{Baranski20}
Krzysztof Bara{\'n}ski, Yonatan Gutman, and Adam {\'S}piewak.
\newblock A probabilistic {T}akens theorem.
\newblock {\em Nonlinearity}, 33(9):4940, 2020.

\bibitem{Bonatti16}
Christian Bonatti, Sylvain Crovisier, Lorenzo~J D{\'\i}az, and Amie Wilkinson.
\newblock What is\ldots a blender?
\newblock {\em Notices of the American Mathematical Society},
  63(10):1175--1178, 2016.

\bibitem{Bourgain17}
Jean Bourgain and Semyon Dyatlov.
\newblock Fourier dimension and spectral gaps for hyperbolic surfaces.
\newblock {\em Geometric and Functional Analysis}, 27(4):744--771, 2017.

\bibitem{Chernov95}
Nikolai~I Chernov.
\newblock Limit theorems and {M}arkov approximations for chaotic dynamical
  systems.
\newblock {\em Probability Theory and Related Fields}, 101(3):321--362, 1995.

\bibitem{DemersLiverani08}
Mark Demers and Carlangelo Liverani.
\newblock Stability of statistical properties in two-dimensional piecewise
  hyperbolic maps.
\newblock {\em Transactions of the American Mathematical Society},
  360(9):4777--4814, 2008.

\bibitem{Doucet01}
Arnaud Doucet, Nando De~Freitas, Neil~James Gordon, et~al.
\newblock {\em Sequential Monte Carlo methods in practice}, volume~1.
\newblock Springer, 2001.

\bibitem{Gottwald16}
Georg~A Gottwald, Caroline~L Wormell, and Jeroen Wouters.
\newblock On spurious detection of linear response and misuse of the
  fluctuation--dissipation theorem in finite time series.
\newblock {\em Physica D: Nonlinear Phenomena}, 331:89--101, 2016.

\bibitem{Gouezel06}
S{\'e}bastien Gou{\"e}zel and Carlangelo Liverani.
\newblock Banach spaces adapted to anosov systems.
\newblock {\em Ergodic Theory and Dynamical Systems}, 26(1):189--217, 2006.

\bibitem{Hochman15}
Michael Hochman and Pablo Shmerkin.
\newblock Equidistribution from fractal measures.
\newblock {\em Inventiones mathematicae}, 202(1):427--479, 2015.

\bibitem{Misiurewicz80}
Michal Misiurewicz.
\newblock Strange attractors for the {L}ozi mappings.
\newblock {\em Annals of the New York Academy of Sciences}, 357(1):348--358,
  1980.

\bibitem{Mosquera22}
Carolina~A Mosquera and Andrea Olivo.
\newblock Fourier decay of self-similar measures on the complex plane.
\newblock {\em arXiv preprint arXiv:2207.11570}, 2022.

\bibitem{Mosquera18}
Carolina~A Mosquera and Pablo Shmerkin.
\newblock Self-similar measures: asymptotic bounds for the dimension and
  {F}ourier decay of smooth images.
\newblock {\em Annales Academi\ae\ Scientiarum Fennic\ae\ Mathematica},
  43:823--834, 2018.

\bibitem{Ruelle18}
David Ruelle.
\newblock {Linear response theory for diffeomorphisms with tangencies of stable
  and unstable manifolds---a contribution to the {G}allavotti-{C}ohen chaotic
  hypothesis}.
\newblock {\em Nonlinearity}, 31(12):5683, 2018.

\bibitem{Sahlsten20}
Tuomas Sahlsten and Connor Stevens.
\newblock Fourier transform and expanding maps on {C}antor sets.
\newblock {\em arXiv preprint arXiv:2009.01703}, 2020.

\bibitem{Schmeling98}
J{\"o}rg Schmeling and Serge Troubetzkoy.
\newblock Dimension and invertibility of hyperbolic endomorphisms with
  singularities.
\newblock {\em Ergodic Theory and Dynamical Systems}, 18(5):1257--1282, 1998.

\bibitem{lozi}
Caroline~L Wormell.
\newblock On convergence of linear response formulae in some piecewise
  hyperbolic maps.
\newblock {\em {\it arXiv} preprint arXiv:2206.09292}, 2022.

\bibitem{Young85}
Lai-Sang Young.
\newblock {B}owen-{R}uelle measures for certain piecewise hyperbolic maps.
\newblock In {\em The Theory of Chaotic Attractors}, pages 265--272. Springer,
  1985.

\end{thebibliography}

\end{document}